\documentclass[11pt]{article}
\usepackage[margin=1in]{geometry} 
\geometry{letterpaper}

\usepackage{amssymb,amsfonts,amsmath,bbm,mathrsfs,stmaryrd}
\usepackage{xcolor}
\usepackage{url}

\usepackage{enumerate}

\usepackage{hyperref}
\hypersetup{colorlinks,
             linkcolor=black!75!red,
             citecolor=blue,
             pdftitle={},
             pdfproducer={pdfLaTeX},
             pdfpagemode=None,
             bookmarksopen=true
             bookmarksnumbered=true}

\usepackage{tikz}
\usetikzlibrary{arrows,calc,decorations.pathreplacing,decorations.markings,intersections,shapes.geometric,through,fit,shapes.symbols,positioning,decorations.pathmorphing}

\usepackage{braket}

\usepackage[amsmath,thmmarks,hyperref]{ntheorem}
\usepackage{cleveref}

\creflabelformat{enumi}{#2(#1)#3}

\crefname{section}{Section}{Sections}
\crefformat{section}{#2Section~#1#3} 
\Crefformat{section}{#2Section~#1#3} 

\crefname{subsection}{\S}{\S\S}
\AtBeginDocument{%
  \crefformat{subsection}{#2\S#1#3}%
  \Crefformat{subsection}{#2\S#1#3}%
}

\crefname{subsubsection}{\S}{\S\S}
\AtBeginDocument{%
  \crefformat{subsubsection}{#2\S#1#3}%
  \Crefformat{subsubsection}{#2\S#1#3}%
}

%

\theoremstyle{plain}

\newtheorem{lemma}{Lemma}[section]
\newtheorem{proposition}[lemma]{Proposition}
\newtheorem{corollary}[lemma]{Corollary}
\newtheorem{theorem}[lemma]{Theorem}

\newtheorem{question}[lemma]{Question}

\theoremstyle{nonumberplain}

\theoremstyle{plain}
\theorembodyfont{\upshape}
\theoremsymbol{\ensuremath{\blacklozenge}}

\newtheorem{definition}[lemma]{Definition}
\newtheorem{example}[lemma]{Example}
\newtheorem{remark}[lemma]{Remark}
\newtheorem{convention}[lemma]{Convention}

\crefname{definition}{definition}{definitions}
\crefformat{definition}{#2definition~#1#3} 
\Crefformat{definition}{#2Definition~#1#3} 

\crefname{ex}{example}{examples}
\crefformat{example}{#2example~#1#3} 
\Crefformat{example}{#2Example~#1#3} 

\crefname{remark}{remark}{remarks}
\crefformat{remark}{#2remark~#1#3} 
\Crefformat{remark}{#2Remark~#1#3} 

\crefname{convention}{convention}{conventions}
\crefformat{convention}{#2convention~#1#3} 
\Crefformat{convention}{#2Convention~#1#3} 

\crefname{notation}{notation}{notations}
\crefformat{notation}{#2notation~#1#3} 
\Crefformat{notation}{#2Notation~#1#3} 

\crefname{table}{table}{tables}
\crefformat{table}{#2table~#1#3} 
\Crefformat{table}{#2Table~#1#3}

\crefname{lemma}{lemma}{lemmas}
\crefformat{lemma}{#2lemma~#1#3} 
\Crefformat{lemma}{#2Lemma~#1#3} 

\crefname{proposition}{proposition}{propositions}
\crefformat{proposition}{#2proposition~#1#3} 
\Crefformat{proposition}{#2Proposition~#1#3} 

\crefname{corollary}{corollary}{corollaries}
\crefformat{corollary}{#2corollary~#1#3} 
\Crefformat{corollary}{#2Corollary~#1#3} 

\crefname{theorem}{theorem}{theorems}
\crefformat{theorem}{#2theorem~#1#3} 
\Crefformat{theorem}{#2Theorem~#1#3} 

\crefname{enumi}{}{}
\crefformat{enumi}{(#2#1#3)}
\Crefformat{enumi}{(#2#1#3)}

\crefname{assumption}{assumption}{Assumptions}
\crefformat{assumption}{#2assumption~#1#3} 
\Crefformat{assumption}{#2Assumption~#1#3} 

\crefname{equation}{}{}
\crefformat{equation}{(#2#1#3)} 
\Crefformat{equation}{(#2#1#3)}


\numberwithin{equation}{section}
\renewcommand{\theequation}{\thesection-\arabic{equation}}

\theoremstyle{nonumberplain}
\theoremsymbol{\ensuremath{\blacksquare}}

\newtheorem{proof}{Proof}
\newcommand\pf[1]{\newtheorem{#1}{Proof of \Cref{#1}}}

\newcommand\bC{{\mathbb C}}

\newcommand\bR{{\mathbb R}}
\newcommand\bS{{\mathbb S}}

\newcommand\bZ{{\mathbb Z}}

\newcommand\cO{{\mathcal O}}

\newcommand\fa{{\mathfrak a}}

\newcommand\fg{{\mathfrak g}}

\newcommand\fk{{\mathfrak k}}

\newcommand\fn{{\mathfrak n}}

\newcommand\fp{{\mathfrak p}}

\newcommand\fz{{\mathfrak z}}

\DeclareMathOperator{\id}{id}


\newcommand\numberthis{\addtocounter{equation}{1}\tag{\theequation}}


\newcommand{\cat}[1]{\textsc{#1}}

\renewcommand{\square}{\mathrel{\Box}}


\title{Chain-center duality for locally compact groups}
\author{Alexandru Chirvasitu}


\begin{document}

\date{}

\newcommand{\Addresses}{{
  \bigskip
  \footnotesize

  \textsc{Department of Mathematics, University at Buffalo, Buffalo,
    NY 14260-2900, USA}\par\nopagebreak \textit{E-mail address}:
  \texttt{achirvas@buffalo.edu}

}}

\maketitle

\begin{abstract}
  The chain group $C(G)$ of a locally compact group $G$ has one generator $g_{\rho}$ for each irreducible unitary $G$-representation $\rho$, a relation $g_{\rho}=g_{\rho'}g_{\rho''}$ whenever $\rho$ is weakly contained in $\rho'\otimes \rho''$, and $g_{\rho^*}=g_{\rho}^{-1}$ for the representation $\rho^*$ contragredient to $\rho$. $G$ satisfies chain-center duality if assigning to each $g_{\rho}$ the central character of $\rho$ is an isomorphism of $C(G)$ onto the dual $\widehat{Z(G)}$ of the center of $G$.

  We prove that $G$ satisfies chain-center duality if it is (a) a compact-by-abelian extension, (b) connected nilpotent, (c) countable discrete icc or (d) connected semisimple; this generalizes M. M\"{u}ger's result compact groups satisfy chain-center duality.
\end{abstract}

\noindent {\em Key words: locally compact group; chain group; center; Lie group; semisimple; nilpotent; orbit method; principal series; discrete series; Iwasawa decomposition; minimal parabolic}

\vspace{.5cm}

\noindent{MSC 2020: 22D10; 22D30; 47L50; 22C05; 17B08; 22E46; 18M05}

\tableofcontents

\section*{Introduction}

The motivation for the paper is the following representation-theoretic interpretation of the center $Z(G)$ of a compact group $G$ discovered in \cite{mug}:
\begin{itemize}
\item On the one hand, one can impose a ``universal grading'' on the category $\mathrm{Rep}(G)$ of unitary $G$-representations by assigning each irreducible $\rho\in\widehat{G}$ an abstract generator $g_{\rho}$ and imposing the relation
  \begin{equation*}
    g_{\rho} = g_{\rho'}g_{\rho''}
  \end{equation*}
  whenever we have a direct-summand inclusion
  \begin{equation*}
    \rho\le \rho'\otimes\rho''.
  \end{equation*}
  This is the {\it chain group} $C(G)$ of \cite[Proposition 2.3]{mug}. 
\item On the other, there is an obvious grading of $\mathrm{Rep}(G)$ by the Pontryagin dual $\widehat{Z(G)}$: the center acts by scalars on any irreducible representation $\rho\in\widehat{G}$ by Schur's Lemma \cite[(8.6)]{rob}, so $\rho$ naturally gets assigned a character $\chi_{\rho}\in \widehat{Z(G)}$; this assignment then satisfies the same type of tensor-product-compatibility relation:
  \begin{equation*}
    \rho\le \rho'\otimes\rho''\Rightarrow \chi_{\rho} = \chi_{\rho'}\chi_{\rho''}.
  \end{equation*}
\end{itemize}
The main result of \cite{mug} is that these two procedures are equivalent. Formally, \cite[Theorem 3.1]{mug} says that
\begin{equation}\label{eq:canpre}
  C(G)\ni g_{\rho}\mapsto \chi_{\rho}\in \widehat{Z(G)}\text{ for }\rho\in\widehat{G}
\end{equation}
is an isomorphism between the chain group $C(G)$ and the dual $\widehat{Z(G)}$ of the center: the {\it chain-center duality} of this paper's title. As that same title suggests, the focus here is on {\it locally} compact groups. Extending the framework for considering such duality results is not difficult, though some modifications are needed.

One problem is that in general, for locally compact groups, appearing as a direct summand (or not) is a poor indicator of ``containment''. A much more reasonable notion is that of {\it weak} containment (\cite[Chapter 3, \S 4.5, Definition 3]{kir} or \cite[Theorem 3.4.4]{dixc}), denoted throughout by `$\prec$' or `$\preceq$'. In view of this, our reworking of \cite[Definition 2.1, Proposition 2.3]{mug} reads

\begin{definition}\label{def:chain}
  Let $G$ be a locally compact group. The {\it chain group} $C(G)$ is group defined
  \begin{enumerate}[(a)]
  \item\label{item:1} by one generator $g_\rho$ for each irreducible unitary representation $\rho\in \widehat{G}$;
  \item\label{item:2} subject to $g_\rho=g_{\rho'}g_{\rho''}$ for any weak containment relation
    \begin{equation*}
      \rho\preceq \rho'\otimes \rho'';
    \end{equation*}
  \item\label{item:3} as well as $g_{\rho^*}=g_rho^{-1}$ for the {\it contragredient} representation $\rho^*$ of $\rho$ \cite[Definition A.1.10]{bdv}.
  \end{enumerate}
  The chain group has a natural topology: since
  \begin{itemize}
  \item the unitary dual $\widehat{G}$ surjects onto the set of generators $\{g_\rho\}$;
  \item and the relations ensure that the image of that surjection in fact encompasses all of $C(G)$,
  \end{itemize}
  we can equip the latter with the quotient of the {\it Fell topology} \cite[Definition F.2.1]{bdv} resulting from said surjection $\widehat{G}\to C(G)$.
\end{definition}

Schur's Lemma and the canonical map \Cref{eq:canpre} make just as much sense as for compact groups, so one can define the locally compact group $G$ to be {\it chain-center dual} or {\it cc-dual} for short if \Cref{eq:canpre} is an isomorphism (see \Cref{def:ccdual}). The obvious question flows naturally:

\begin{question}
  Are all locally compact groups cc-dual? 
\end{question}

I do not know the answer, but the results below identify several fairly large classes. Paraphrasing and summarizing:

\begin{theorem}
  Let $G$ be a locally compact group. $G$ is cc-dual in each of the following cases:
  \begin{enumerate}[(a)]
  \item $G$ is a compact-by-abelian extension (i.e. has a closed, abelian normal subgroup with corresponding compact quotient): \Cref{th:cpct-by-ab};
  \item $G$ is connected and nilpotent: \Cref{th:simp-con-nil};
  \item $G$ is countable discrete, and its non-trivial conjugacy classes are infinite (i.e. $G$ is {\it icc}): \Cref{th:icc};
  \item $G$ is a connected semisimple Lie group: \Cref{th:rr1}. 
  \end{enumerate}
\end{theorem}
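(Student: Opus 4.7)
The statement is a summary of four essentially independent assertions, each calling on its own representation-theoretic toolkit. For any locally compact $G$, the assignment $g_\rho \mapsto \chi_\rho$ descends to a well-defined continuous homomorphism $\Phi \colon C(G) \to \widehat{Z(G)}$: Schur's Lemma makes the assignment sensible on generators, while central characters multiply on tensor products and invert under contragredient, so the defining relations of \Cref{def:chain} are respected. Surjectivity of $\Phi$ in each case is a separation statement (enough irreducibles to distinguish points of $Z(G)$, a Gelfand--Raikov--type fact); the content lies in \emph{injectivity}, i.e., proving the tensor-product relations already force $C(G)$ to coincide with $\widehat{Z(G)}$.

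\textbf{Cases (a) and (b).} For a \emph{compact-by-abelian} extension with closed abelian normal $A \triangleleft G$ and compact quotient $G/A$, apply Mackey's normal-subgroup analysis: irreducibles of $G$ arise from $G$-orbits on $\widehat{A}$ together with irreducibles of stabilizers $G_\chi \supseteq A$, and each $G_\chi/A$ is closed in the compact $G/A$, hence compact. One can then bootstrap off M\"uger's original compact-group theorem applied to $G_\chi/A$, compute the abelian part directly via characters of $A$, and reassemble using Mackey's tensor-product formulas while tracking the embedding of $Z(G)$. For \emph{connected nilpotent} $G$, reduce to the Lie case via the Yamabe projective-limit structure and invoke Kirillov's orbit method: irreducibles $\rho_\Omega$ are indexed by coadjoint orbits $\Omega \subset \fg^*$, and the central character of $\rho_\Omega$ depends only on $\ell|_\fz$ for $\ell\in\Omega$. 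Tensor-product decomposition at the level of central characters amounts to addition in $\widehat{\fz} \cong \widehat{Z(G)}$, which matches the chain-group product and yields an explicit inverse to $\Phi$.

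\textbf{Cases (c) and (d).} For a \emph{countable discrete icc} group, centrality of an element $z\ne e$ would force $\{z\}$ to be a finite conjugacy class, so $Z(G) = \{e\}$, $\widehat{Z(G)} = 1$, and the claim reduces to $C(G) = 1$, i.e.\ $g_\rho = e$ for every irreducible $\rho$. The natural strategy is to exploit the infinite-conjugacy-class hypothesis to produce weak containments of the form $\rho \preceq \rho\otimes\rho$ (or $1_G \preceq \rho\otimes\rho^*$) for every $\rho$, forcing $g_\rho = g_\rho^2 = e$ in $C(G)$. For \emph{connected semisimple Lie} $G$, fix an Iwasawa decomposition $G=KAN$ with minimal parabolic $P=MAN$, and note $Z(G) \subset Z_K(A) = M$. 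Use the unitary principal series $\pi_{\sigma,\nu} = \mathrm{Ind}_P^G(\sigma\otimes\nu\otimes 1)$ with $\sigma\in\widehat{M}$, $\nu\in\widehat{A}$: by Harish-Chandra's subquotient theorem every irreducible $\rho\in\widehat{G}$ is a subquotient of some $\pi_{\sigma,\nu}$, so it suffices to verify chain relations among the $g_{\pi_{\sigma,\nu}}$. Their central characters are $\sigma|_{Z(G)}$ (independent of $\nu$); tensor-product decomposition analyzed via the compact picture $\pi_{\sigma,\nu}|_K \cong \mathrm{Ind}_M^K \sigma$ reduces the $\sigma$-part to M\"uger's theorem on $K$, while the $A$-part contributes nothing to $\widehat{Z(G)}$.

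\textbf{Main obstacle.} The most substantial case is expected to be (d): lifting weak-containment relations from principal series to the full unitary dual (including discrete and complementary series) requires Harish-Chandra's subquotient/subrepresentation machinery, together with careful bookkeeping of how the finite center embeds into the compact picture. Case (c) is delicate too, since the hoped-for idempotency $\rho \preceq \rho\otimes\rho$ is not automatic and hinges on an explicit construction exploiting icc. By contrast, (a) is a clean reduction to M\"uger's compact theorem via Mackey, and (b) is routine modulo standard orbit-method facts.
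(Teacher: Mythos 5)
Your overall framework (surjectivity of \Cref{eq:can} is the easy half, injectivity is the content) matches the paper's \Cref{cor:injenough}, and your sketches for (a) and (b) run roughly parallel to the paper's arguments --- though in (a) the paper applies M\"uger's theorem to the full compact quotient $K=G/N$ after first reducing, via Zorn's lemma, to a \emph{maximal} abelian normal $N$, and then merges two families of chain-trivial classes through the Galois correspondence between closed normal subgroups of $K$ and tensor subcategories of $\mathrm{Rep}(K)$; and in (b) the decisive step is that the relation $\Omega_X\subseteq\Omega_Y+\Omega_Z$ exhibits $C(G)$ as the space of coadjoint \emph{coinvariants} of $\fg^*$, whose vector-space dual is the space of invariants $\fz^*$. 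These are details you could plausibly fill in.

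The genuine gaps are in (c) and (d). For (c), both containments you propose are unavailable: $1_G\preceq\rho\otimes\rho^*$ for every irreducible $\rho$ characterizes \emph{amenability} (see \Cref{re:o-o}), so it fails for non-amenable icc groups such as free groups, and the icc condition gives no route to $\rho\preceq\rho\otimes\rho$ either. The paper instead quotes de la Harpe's result that a countable icc group admits an irreducible $\pi$ weakly equivalent to the regular representation $\lambda_G$, and then uses the absorption property $\rho\otimes\lambda_G\cong\lambda_G^{\oplus\dim\rho}$ to get $\rho\cdot\pi=\pi$ in $C(G)$, hence $\rho=1$. For (d), the subquotient theorem is the wrong tool: it produces algebraic (Harish-Chandra--module) subquotients of generally non-unitary principal series, which yields no weak-containment relation in $\widehat{G}$ and in particular says nothing about how the discrete and complementary series sit inside $C(G)$. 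What is actually needed --- and what \Cref{th:dpp} supplies --- is that for \emph{any} irreducible $\rho$ and any unitary principal series $\pi$, the product $\rho\otimes\pi$ weakly contains a principal-series constituent (proved by letting the $A$-action contract the coadjoint orbits of $N$ to $0$), so that every class in $C(G)$ is a product of principal-series classes. One then still needs Martin's formula $\pi_{\sigma,\chi}\otimes\pi_{\sigma',\chi'}\cong\mathrm{Ind}_{MA}^G\bigl((\sigma\otimes\sigma')\otimes\chi\chi'\bigr)$ to kill the dependence on $\chi$, the amenability of $MAN$ to obtain the relevant weak containments, and the identity $Z(K)\cap M=Z(G)$ (\Cref{le:zkm}) before the reduction to M\"uger's theorem on $K$ that you anticipate actually closes.
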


\subsection*{Acknowledgements}

This work is partially supported by NSF grant DMS-2001128.

\section{Preliminaries}\label{se.prel}

Topological groups are always assumed Hausdorff unless specified otherwise, and `representation' always means unitary (for groups) or $*$-representation on a Hilbert space for a $C^*$-algebra.

For a locally compact group $G$ the symbol $\widehat{G}$ denotes the set of isomorphism classes of irreducible unitary representations (as is standard in the literature: \cite[\S 13.1.4]{dixc}, \cite[Definition 1.46]{kt}, etc.). More generally (though this will not happen often), $\widehat{A}$ denotes the set of (isomorphism classes of) irreducible $*$-representations of the $C^*$-algebra $A$ (cf. \cite[\S 2.3.2]{dixc} or \cite[\S 4.1.1]{ped}).

Some background on {\it type-I} $C^*$-algebras is needed, as covered, say, in \cite[Chapter 9]{dixc} (the term {\it postliminal} \cite[Definition 4.3.1]{dixc} is also in use) or \cite[Chapter 6]{ped}. Locally compact groups are type-I if their universal $C^*$-algebras are (as in \cite[\S 13.9.4]{dixc}, for instance). Numerous equivalent characterizations of the type-I property are known when the $C^*$-algebra in question is separable: see for instance \cite[Theorem 9.1]{dixc}, \cite[Theorem 6.8.7]{ped} or \cite[Theorems IV.1.5.7 and IV.1.5.12]{blk}. Complications ensue in the non-separable case \cite[\S 6.9]{ped}; for that reason, the following convention is in place.

\begin{convention}
  All discussion of type-I algebras / groups will be limited to the separable case (for groups this means $G$ is assumed to be {\it second-countable}, i.e. have a countable basis of open sets \cite[\S 1.3]{mz}).
\end{convention}

We will, at some point, have to consider groups that are also topological spaces, but for which it is unclear whether the multiplication is continuous. We refer to these as {\it groups-with-topology} (cf. \Cref{re:notopgp}) and denote the corresponding category (with continuous group morphisms) by $\mathrm{Gr}_{top}$. By contrast, $\mathrm{TopGp}$ denotes the category of topological groups.

\begin{proposition}\label{pr:topgradj}
  The inclusion functor $\iota:\mathrm{TopGr}\subset \mathrm{Gr}_{top}$ from topological groups to groups-with-topology is a right adjoint.
\end{proposition}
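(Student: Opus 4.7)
The plan is to exhibit a left adjoint $L:\mathrm{Gr}_{top}\to \mathrm{TopGr}$ to $\iota$. Given a group-with-topology $X$ with topology $\tau_X$, I would let $L(X)$ have the same underlying group as $X$, equipped with the finest topology that is coarser than $\tau_X$ and makes $X$ into a topological group. Concretely, set
\[
  \cT_X = \{\tau\subseteq\tau_X : (X,\tau)\text{ is a topological group}\},
\]
which is non-empty (the indiscrete topology belongs to it), and let $\tau_L$ be the join of $\cT_X$ in the complete lattice of topologies on the underlying set of $X$. The unit of the prospective adjunction is then the set-theoretic identity map $\eta_X:X\to\iota L(X)$, continuous because $\tau_L\subseteq\tau_X$.

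The one step requiring genuine content is showing that $\tau_L$ itself lies in $\cT_X$. It is enough to verify continuity of multiplication $\mu$ and inversion on a sub-basis of $\tau_L$, and such a sub-basis is furnished by $\bigcup_{\tau\in\cT_X}\tau$. Given any $U\in\tau_i\in\cT_X$, continuity of $\mu$ for $\tau_i$ makes $\mu^{-1}(U)$ open in the product topology $\tau_i\times\tau_i$, hence a fortiori in the finer product $\tau_L\times\tau_L$; inversion is handled identically. The containment $\tau_L\subseteq\tau_X$ holds because $\tau_X$ is a topology already containing $\bigcup_{\tau\in\cT_X}\tau$.

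For the universal property I would argue as follows: given a morphism $\phi:X\to\iota H$ in $\mathrm{Gr}_{top}$ with $H$ a topological group, consider the pullback topology $\phi^*\tau_H$ on the underlying group of $X$. The identity $\phi\circ\mu_X=\mu_H\circ(\phi\times\phi)$ together with continuity of $\mu_H$ on $H$ forces $\phi^*\tau_H$ to be a topological group topology; and $\phi^*\tau_H\subseteq\tau_X$ because $\phi$ was assumed $\tau_X$-continuous. Hence $\phi^*\tau_H\in\cT_X$, so $\phi^*\tau_H\subseteq\tau_L$, which says exactly that $\phi:L(X)\to H$ is continuous. Uniqueness of the lift is automatic because $\eta_X$ is the identity on underlying sets.

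The main (and essentially only) obstacle is thus the closure of $\cT_X$ under arbitrary joins carried out in the second paragraph; the remaining pieces—naturality of the resulting hom-set bijection and the fact that the counit $L\iota H\to H$ is the identity on topological groups—are formal and immediate from the construction.
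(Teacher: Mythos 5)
Your proof is correct, but it reaches the left adjoint by a different mechanism than the paper. The paper weakens the given topology $\tau$ by discarding open sets whose preimages under multiplication and inversion fail to be open, and then iterates this pruning, asserting that countably many iterations stabilize to a group topology. You instead take the \emph{join} $\tau_L$ of all group topologies coarser than $\tau_X$ in the lattice of topologies, and verify directly that $\tau_L$ is itself a group topology by checking continuity of multiplication and inversion on the subbasis $\bigcup_{\tau\in\cT_X}\tau$ --- using that $\mu^{-1}(U)$ open in $\tau_i\times\tau_i$ implies open in the finer $\tau_L\times\tau_L$. Both constructions produce the same object (the finest group topology coarser than $\tau_X$), and your universal-property argument via the pullback topology $\phi^*\tau_H$ is the same in substance as the paper's appeal to the defining universality of its $\tau'$. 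What your route buys is that it sidesteps entirely the question of whether the paper's iterative pruning actually terminates after countably many steps (the paper asserts this without argument, and the stabilization is not obvious, since openness of $\mu^{-1}(U)$ in each $\tau_n\times\tau_n$ does not immediately give openness in $(\bigcap_n\tau_n)\times(\bigcap_n\tau_n)$); the join-of-group-topologies fact you prove is a clean, standard lemma that delivers the adjoint in one step. The one point worth making explicit in your write-up is that $\phi^*\tau_H$ is closed under the operations needed to be a topology and that the verification that it is a \emph{group} topology uses both that $\phi$ is a homomorphism and that preimages of basic open boxes under $\phi\times\phi$ are boxes for $\phi^*\tau_H$; you gesture at this correctly, so there is no gap.
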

\begin{proof}
  We construct the left adjoint
  \begin{equation*}
    F: \mathrm{Gr}_{top}\to \mathrm{TopGr}.
  \end{equation*}
  On the level of sets and functions between them $F$ is simply the identity; what it does to a group-with-topology is alter that topology, weakening it so as to ensure compatibility with the group operations.

  Concretely, let $G$ be a group-with-topology and denote by $\tau$ that topology. Now consider the weaker topology defined as follows:
  \begin{itemize}
  \item keep only those open sets of $\tau$ whose preimage through the multiplication $G\times G\to G$ is open in the product topology $\tau\times\tau$;
  \item and whose preimage through the inverse $\bullet^{-1}:G\to G$ is also $\tau$-open. 
  \end{itemize}
  Then repeat the procedure with $\tau'$ in place of $\tau$ if necessary, etc. Doing this countably infinitely many times will suffice to produce a topology $\tau'$ weaker than $\tau$ making the original group structure of $G$ into a topological one.

  By the very definition of $\tau'$, any morphism $(G,\tau)\to H$ of groups-with-topology for a topological group $H$ will in fact factor (uniquely) through the canonical continuous map $\id:(G,\tau)\to (G,\tau')$. This universality is what makes $F:(G,\tau)\mapsto (G,\tau')$ the left adjoint to $\iota$.
\end{proof}

\section{Chain groups and centers}\label{se:cc}

\subsection{Generalities}

The chain group $C(G)$ attached to a locally compact group $G$ was defined above (see \Cref{def:chain}). 

\begin{remark}\label{re:notopgp}
  Note that at this stage $C(G)$ has {\it not} been shown to be a topological group: the topology it is equipped with might, in principle, not be compatible with the multiplication.

  While $C(G)$ will be a topological group in the cases of interest (e.g. this is an implicit requirement of \Cref{def:ccdual}), I do not know whether this is so in general. For that reason, we occasionally use the awkward phrase `groups-with-topology' to refer to the category of groups equipped with a perhaps-incompatible topology, with continuous group morphisms.
\end{remark}

The chain group relates to the center $Z(G)$ as follows (cf. \cite[Proposition 2.5]{mug}).

\begin{definition}\label{def:ccdual}
  Since every central element $g\in Z(G)$ acts as a scalar in each irreducible representation $X\in \widehat{G}$ (by Schur's Lemma; e.g. \cite[(8.6)]{rob}), we have a canonical map $\widehat{G}\to \widehat{Z(G)}$ onto the Pontryagin dual of the center of $G$. It is easily checked that that map extends to a continuous surjective (\Cref{le:surj}) morphism.
  \begin{equation}\label{eq:can}
    \cat{can}:C(G)\to \widehat{Z(G)}
  \end{equation}
  $G$ {\it satisfies chain-center duality} or {\it is chain-center or cc-dual} if the map \Cref{eq:can} is an isomorphism of topological groups.
\end{definition}

\begin{example}\label{ex:cpct}
  In the language of \Cref{def:ccdual}, \cite[Theorem 3.1]{mug} says that compact groups are cc-dual (note that in that case the topologies on the groups in question are discrete).  
\end{example}

On the other hand, we also have

\begin{lemma}\label{le:ab}
  Abelian locally compact groups are cc-dual in the sense of \Cref{def:ccdual}.
\end{lemma}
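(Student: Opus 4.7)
The plan is to argue that for abelian locally compact $G$ every step collapses to something tautological, and then check that the topologies agree.

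First, set up the identifications. For abelian $G$ the universal $C^*$-algebra $C^*(G)$ is commutative, so every irreducible unitary representation is one-dimensional; hence the unitary dual $\widehat{G}$ coincides with the Pontryagin dual, a locally compact Hausdorff group under pointwise multiplication. Moreover $Z(G)=G$, so $\widehat{Z(G)}=\widehat{G}$, and for $\rho\in\widehat{G}$ the central character $\chi_\rho$ of \Cref{def:ccdual} is just $\rho$ itself. Consequently the map $\cat{can}$ of \Cref{eq:can} is, at the level of generators, $g_\rho\mapsto\rho$.

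Next, unpack the defining relations of $C(G)$ in this setting. For characters $\rho',\rho''$ the tensor product $\rho'\otimes\rho''$ is again a single character $\rho'\rho''$. On the abelian dual the Fell topology agrees with the Pontryagin topology and is therefore Hausdorff, so for irreducible $\rho,\pi$ the weak containment $\rho\preceq\pi$ forces $\rho=\pi$; in particular $\rho\preceq\rho'\otimes\rho''$ amounts to the literal equation $\rho=\rho'\rho''$. Hence the only relations in \Cref{def:chain}\cref{item:2} are $g_{\rho'\rho''}=g_{\rho'}g_{\rho''}$, and \cref{item:3} follows from them since $\rho^*=\rho^{-1}$ in $\widehat{G}$. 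These are precisely the multiplicative relations of the Pontryagin dual, so the assignment $\rho\mapsto g_\rho$ extends to a group homomorphism $\widehat{G}\to C(G)$ whose composition with $\cat{can}$ is the identity on $\widehat{G}$. This forces $\widehat{G}\to C(G)$ and $\cat{can}$ to be mutually inverse group isomorphisms.

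It remains to confirm that $\cat{can}$ is a homeomorphism. The topology on $C(G)$ is defined as the quotient of the Fell topology on $\widehat{G}$ under $\widehat{G}\twoheadrightarrow C(G)$; since this surjection is now a bijection, the quotient topology is exactly the Fell/Pontryagin topology on $\widehat{G}$. Under the identifications above, $\cat{can}$ is the identity self-map of $\widehat{G}$ equipped with its Pontryagin topology on both sides, hence a topological group isomorphism, proving that $G$ is cc-dual.

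There is no real obstacle here; the only mild point to verify carefully is the identification of the Fell topology on the unitary dual of an abelian group with its classical Pontryagin topology, which is standard (e.g.\ from the description of $\widehat{G}$ as the spectrum of the commutative $C^*$-algebra $C^*(G)$).
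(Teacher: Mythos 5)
Your proposal is correct and follows essentially the same route as the paper, which simply notes that $Z(G)=G$, that irreducibles are one-dimensional so $\widehat{G}$ is the Pontryagin dual with its standard topology, and declares the identification \Cref{eq:can} clear. The only difference is that you spell out the ``clear'' part --- in particular the useful observation that weak containment between irreducibles forces equality because the dual of an abelian group is Hausdorff, so the relations of \Cref{def:chain} collapse to the group law of $\widehat{G}$ --- which is a faithful elaboration rather than a different argument.
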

\begin{proof}
  In the abelian case $Z(G)=G$. On the other hand, because irreducible unitary representations are 1-dimensional (e.g. by Schur's lemma again, \cite[(8.6)]{rob}), the dual $\widehat{G}$ is simply the Pontryagin dual group with its standard topology \cite[Example F.2.5]{bdv}. That \Cref{eq:can} is an identification is now clear.
\end{proof}

We make note of the following simple fact, for future reference. 

\begin{lemma}\label{le:surj}
  For an arbitrary locally compact group $G$ the canonical morphism \Cref{eq:can} is onto.
\end{lemma}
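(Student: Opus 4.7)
My plan is to reduce the statement to showing that the set-theoretic map
\[
\widehat{G}\ni \rho\mapsto \chi_\rho\in \widehat{Z(G)}
\]
is already surjective, since the image of $\cat{can}$ contains all such $\chi_\rho$ by construction. In particular I will not need to exploit the group structure of $C(G)$ at all.

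The construction is the standard one: given $\chi\in \widehat{Z(G)}$, set $Z:=Z(G)$ (a closed subgroup of $G$) and form the induced unitary representation
\[
\pi:=\mathrm{Ind}_Z^G\chi.
\]
Because $Z$ is closed and both $Z$ and $G$ are locally compact, $G/Z$ carries a non-zero quasi-invariant measure class, so the Hilbert space of sections defining $\pi$ is non-zero; and by the definition of induction, $Z$ acts on $\pi$ through the scalar character $\chi$, i.e.\ $\pi(z)=\chi(z)I$ for every $z\in Z$. Integrating $\pi$ to a $*$-representation of $C^*(G)$, its kernel is a closed two-sided ideal, hence contained in some primitive ideal $\ker\rho$ for an irreducible $\rho\in \widehat{G}$; this is the weak containment $\rho\preceq \pi$.

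It remains to verify $\chi_\rho=\chi$. For that I exploit the measure-algebra action: for each $z\in Z$ and $f\in L^1(G)$, the convolution $\delta_z*f$ lies in $L^1(G)$ and $\pi$ satisfies
\[
\pi(\delta_z*f)-\chi(z)\pi(f)=\bigl(\pi(z)-\chi(z)I\bigr)\pi(f)=0.
\]
Thus $\delta_z*f-\chi(z)f\in \ker\pi\subseteq \ker\rho$ for every $f$, so $\rho(z)\rho(f)=\chi(z)\rho(f)$ for all $f\in L^1(G)$. Non-degeneracy of $\rho$ as an $L^1(G)$-representation then forces $\rho(z)=\chi(z)I$, i.e.\ $\chi_\rho=\chi$, completing the surjectivity argument.

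The only even mildly delicate step is the last one: one must know that weak containment $\rho\preceq \pi$ in the $C^*(G)$-sense is compatible with the multiplier-type action of $Z$, and the convolution-with-$\delta_z$ trick is precisely what bridges that gap by encoding the central action through honest elements of $C^*(G)$. Everything else (non-vanishing of induced representations from closed subgroups, existence of a primitive ideal containing a given proper closed ideal) is entirely standard.
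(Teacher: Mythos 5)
Your proof is correct and follows essentially the same route as the paper's: induce $\chi$ from $Z:=Z(G)$ to $G$, observe that $Z$ acts by the scalar $\chi$ on $\mathrm{Ind}_Z^G\chi$, and pass to an irreducible representation weakly contained in it. The only (cosmetic) divergence is in the final verification that weak containment preserves the central character: the paper cites Fell-continuity of restriction to $Z$, whereas you carry out the hands-on multiplier computation with $\delta_z*f$ inside $L^1(G)$ --- both are valid.
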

\begin{proof}
  Regard an arbitrary character
  \begin{equation*}
    \widehat{Z(G)}\ni \chi:Z(G)\to \bS^1\subset \bC
  \end{equation*}
  as a 1-dimensional unitary representation of the center $Z:=Z(G)$, and consider the induced representation 
  \begin{equation*}
    \rho:=\mathrm{Ind}_Z^G\chi
  \end{equation*}
  \cite[Definition E.1.6]{bdv}. It is easy to see from the definition of the induction procedure that the restriction $\rho|_Z$ back to the center is a sum of copies of $\chi$ (i.e. $Z$ acts in $\rho$ via $\chi$). But then it follows from the Fell-continuity of the restriction operation
  \begin{equation*}
    \mathrm{Rep}(G)\ni \sigma\mapsto \sigma|_Z\in \mathrm{Rep}(Z)
  \end{equation*}
  \cite[Proposition F.3.4]{bdv} that all irreducible representations $\sigma\in \widehat{G}$ weakly contained in $\rho$ (of which there are plenty \cite[Proposition F.2.7]{bdv}) have the same property that $Z$ acts therein via $\chi$. The class of any such $\sigma$ will be mapped by \Cref{eq:can} onto $\chi$, finishing the proof.
\end{proof}

The canonical morphisms \Cref{eq:can} are also functorial, with the appropriate caveats.

\begin{lemma}\label{le:fnct}
  Both $C(G)$ and $\widehat{Z(G)}$ are contravariant functors from the category $\mathcal{LCG}_{dense}$ of locally compact groups with dense-image morphisms to that of abelian groups-with-topology, and the morphisms \Cref{eq:can} constitute a natural transformation.
\end{lemma}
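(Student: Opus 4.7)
The plan is to upgrade the object-level constructions $G \mapsto C(G)$ and $G \mapsto \widehat{Z(G)}$ to contravariant functors on $\mathcal{LCG}_{dense}$ by pulling back representations along $\varphi$, and then to verify naturality of \Cref{eq:can} essentially by unraveling definitions. Fix a dense-image morphism $\varphi \colon G \to H$ of locally compact groups.

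First, I would construct $C(\varphi) \colon C(H) \to C(G)$ via pullback. For $\rho \in \widehat{H}$ on a Hilbert space $\cH_\rho$, set $\varphi^*\rho := \rho \circ \varphi$; this is a continuous unitary $G$-representation on $\cH_\rho$. By continuity of $\rho$ together with density of $\varphi(G)$ in $H$, any closed subspace stable under $\rho(\varphi(G))$ is already stable under $\rho(H)$, so $\varphi^*\rho$ is irreducible whenever $\rho$ is. Pullback strictly commutes with tensor products and contragredients, and it preserves weak containment: matrix coefficients of $\varphi^*\rho$ are pulled back from those of $\rho$, and since $\varphi$ sends compacts to compacts, uniform convergence on compact subsets of $H$ yields the analogous convergence on compact subsets of $G$. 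Consequently, $g_\rho \mapsto g_{\varphi^*\rho}$ extends by the universal property of \Cref{def:chain} to a group homomorphism $C(H) \to C(G)$; continuity of pullback in the Fell topology (immediate from its matrix-coefficient description, cf.~\cite[\S F.2]{bdv}) descends to continuity on the quotients.

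Second, I would produce the dual map on centers. If $z \in Z(G)$ and $h \in H$, choose a net $(g_\alpha)$ in $G$ with $\varphi(g_\alpha) \to h$; from $\varphi(z g_\alpha) = \varphi(g_\alpha z)$ and continuity of multiplication in $H$ one obtains $\varphi(z) h = h \varphi(z)$, so $\varphi$ restricts to a continuous homomorphism $Z(G) \to Z(H)$. Pontryagin duality then supplies a continuous dual morphism $\widehat{Z(H)} \to \widehat{Z(G)}$. Functoriality in both cases is immediate from $(\psi\varphi)^*\rho = \varphi^*(\psi^*\rho)$ and from the functoriality of Pontryagin duality.

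Finally, naturality of $\cat{can}$ is essentially tautological. If $\chi_\rho \in \widehat{Z(H)}$ is the central character of $\rho \in \widehat{H}$ and $z \in Z(G)$, then $\varphi^*\rho$ sends $z$ to $\rho(\varphi(z)) = \chi_\rho(\varphi(z))\cdot\id$, so the central character of $\varphi^*\rho$ is exactly $\chi_\rho \circ \varphi|_{Z(G)}$; this is commutativity of the naturality square on generators, which suffices by the generators-and-relations presentation of $C(H)$. The only substantive obstacle is the density hypothesis: it is precisely what makes pullback preserve irreducibility and simultaneously what forces $\varphi$ to carry $Z(G)$ into $Z(H)$. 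Without it neither construction is well-defined, which is why the source category is restricted to $\mathcal{LCG}_{dense}$ rather than all of $\mathcal{LCG}$.
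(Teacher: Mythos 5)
Your proposal is correct and follows essentially the same route as the paper's (much terser) proof: density of the image forces pullback to preserve irreducibility and forces $\varphi(Z(G))\subseteq Z(H)$, the generators-and-relations presentation then yields $C(H)\to C(G)$, and naturality is checked on generators. You supply details the paper omits (preservation of weak containment via matrix coefficients, continuity in the Fell/quotient topologies), all of which check out.
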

\begin{proof}
  The naturality claim is easy to check once we have functoriality. Consider a dense-image morphism $f:G\to H$, and note first that
  \begin{equation*}
    f(Z(G))\subseteq Z(H)
  \end{equation*}
  (because of the image density) and hence, upon dualizing, $f$ induces a map $\widehat{Z(H)}\to \widehat{Z(G)}$ in the opposite direction.

  As to chain groups, the dense-image condition now ensures that unitary-representation restriction along $f$ preserves irreducibility. The definitions of $C(G)$ and $C(H)$ by generators and relations starting with the irreducibles then makes it clear that we have an induced map $C(H)\to C(G)$.
\end{proof}

\Cref{re:notopgp} notwithstanding, the topologies on $C(G)$ and $\widehat{Z(G)}$ will not play much of a role in assessing cc-duality:

\begin{lemma}\label{le:bijenough}
  A locally compact group $G$ is cc-dual in the sense of \Cref{def:ccdual} if an only if the canonical map \Cref{eq:can} is a bijection.
\end{lemma}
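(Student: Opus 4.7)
The ``only if'' direction is tautological. For the converse, assume $\cat{can}$ is bijective. It is already continuous: by \Cref{def:chain} the topology on $C(G)$ is the quotient of the Fell topology on $\widehat{G}$ via the surjection $q\colon \widehat{G}\twoheadrightarrow C(G)$, and the composite $\cat{can}\circ q\colon \widehat{G}\to \widehat{Z(G)}$ is the restriction-to-center map $r$, which is Fell-continuous by \cite[Proposition F.3.4]{bdv}. It thus suffices to show that $\cat{can}$ is open.

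Under bijectivity, the fibers of $q$ coincide with those of $r$, so the topology on $C(G)$ is identified via $\cat{can}$ with the quotient topology on $\widehat{Z(G)}$ induced by $r$. The Pontryagin topology on $\widehat{Z(G)}$ is always coarser than this quotient topology (by continuity of $r$), so the problem reduces to showing that $r\colon \widehat{G}\to \widehat{Z(G)}$ is a topological identification map---which would follow, in particular, from the openness of $r$.

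To establish the latter, fix $\sigma\in\widehat{G}$ with $r(\sigma)=\chi$ and a Fell-neighborhood $U$ of $\sigma$. By the Fell-continuity of the induction functor $\mathrm{Ind}_Z^G$, whenever $\chi'\in\widehat{Z(G)}$ is close enough to $\chi$ the induced representation $\mathrm{Ind}_Z^G\chi'$ is Fell-close in $\mathrm{Rep}(G)$ to $\mathrm{Ind}_Z^G\chi$. As in the proof of \Cref{le:surj}, every irreducible weakly contained in $\mathrm{Ind}_Z^G\chi'$ has central character $\chi'$, and a semicontinuity argument on the supports of weak containment should produce such an irreducible $\sigma'\in U$, yielding $\chi'\in r(U)$ and hence openness.

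The main obstacle is this last ``continuous selection'' step: extracting, from the Fell-convergence of the induced representations, a genuinely convergent net of irreducible constituents landing in a prescribed Fell-neighborhood. This requires careful handling of the definition of the Fell topology on $\widehat{G}$ via weak containment and a semicontinuity argument on supports; the rest of the argument is essentially formal quotient-topology bookkeeping.
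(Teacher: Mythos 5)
Your reduction to showing that the restriction-to-center map $r\colon\widehat{G}\to\widehat{Z(G)}$ is a topological quotient map is exactly right, and matches the paper's strategy up to that point. The gap is in how you propose to finish: you aim to prove that $r$ is \emph{open}, and the ``continuous selection'' step you flag as the main obstacle is not merely delicate --- the conclusion it is meant to establish is false in general. The paper's own \Cref{ex:notop} exhibits a locally compact group (the universal cover of $Sp(4,\bR)$) for which $r$ is not open: property (T) makes the trivial representation an open point of $\widehat{G}$, while the infinite discrete center $\bZ$ prevents the trivial character from being open in $\widehat{Z(G)}$. So no amount of care with Fell-continuity of induction and semicontinuity of supports will produce the irreducible $\sigma'\in U$ you need; the approach has to be abandoned, not repaired.

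The paper instead verifies the other sufficient condition from \cite[\S 22, discussion preceding Example 1]{munk}: a continuous surjection that is \emph{closed} is a quotient map. Closedness of $r$ is \Cref{le:iscl}, proved $C^*$-algebraically: closed subsets of $\widehat{A}$ correspond to closed two-sided ideals $I$ via $I\mapsto\widehat{A/I}$, and the image of $\widehat{A/I}$ under restriction to the center $Z$ is exactly $\widehat{Z/(Z\cap I)}$ (using that $Z/(Z\cap I)$ embeds in $A/I$, so every character of it lifts by \cite[Proposition 2.10.2]{dixc}), which is closed in $\widehat{Z}$. If you replace your openness argument with this closedness lemma, the remaining quotient-topology bookkeeping in your write-up goes through as stated.
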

\begin{proof}
  One implication is obvious, so we handle the other. Assume \Cref{eq:can} is bijective. It fits into a commutative diagram
  \begin{equation}\label{eq:triang}
    \begin{tikzpicture}[auto,baseline=(current  bounding  box.center)]
      \path[anchor=base] 
      (0,0) node (l) {$\widehat{G}$}
      +(2,1) node (u) {$C(G)$}
      +(2,-1) node (d) {$\widehat{Z(G)}$}
      ;
      \draw[->] (l) to[bend left=6] node[pos=.5,auto] {$\scriptstyle $} (u);
      \draw[->] (u) to[bend left=6] node[pos=.5,auto] {$\scriptstyle \cat{can}$} (d);
      \draw[->] (l) to[bend right=6] node[pos=.5,auto,swap] {$\scriptstyle $} (d);
    \end{tikzpicture}
  \end{equation}
  where the top arrow is the quotient map giving $C(G)$ its topology (by definition), while the bottom map is
  \begin{equation*}
    \widehat{G}\ni \rho\mapsto \chi\in\widehat{Z(G)}\text{ with }\rho|_{Z(G)}\cong \chi^{\oplus S}
  \end{equation*}  
  (for some index set $S$). This map too makes $\widehat{Z(G)}$ into a quotient topological space of $\widehat{G}$: it is continuous \cite[Proposition F.3.4]{bdv}, surjective (\Cref{le:surj}), and closed (\Cref{le:iscl}); it must thus be a quotient map by \cite[\S 22, discussion preceding Example 1]{munk}. To conclude, observe that the bijectivity of $\cat{can}$ identifies the two quotient topologies.
\end{proof}

As a consequence, the central requirement for cc-duality is the {\it in}jectivity of \Cref{eq:can}.

\begin{corollary}\label{cor:injenough}
  A locally compact group $G$ is cc-dual if an only if the canonical map \Cref{eq:can} is one-to-one.
\end{corollary}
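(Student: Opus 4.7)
The plan is to observe that this corollary is essentially immediate from two results already proved in this section. By \Cref{le:bijenough}, cc-duality for $G$ is equivalent to the canonical map $\cat{can}:C(G)\to \widehat{Z(G)}$ of \Cref{eq:can} being a bijection. On the other hand, \Cref{le:surj} tells us that $\cat{can}$ is always surjective, for any locally compact $G$. Consequently, the bijectivity condition from \Cref{le:bijenough} collapses to the injectivity of $\cat{can}$, which is exactly the statement of the corollary.

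Concretely, I would write: one direction is trivial, since a bijection is in particular injective. For the converse, suppose $\cat{can}$ is injective. Combined with the surjectivity guaranteed by \Cref{le:surj}, this makes $\cat{can}$ bijective, and \Cref{le:bijenough} then upgrades this to an isomorphism of topological groups, which is the definition of cc-duality given in \Cref{def:ccdual}.

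There is no real obstacle here; the corollary is a bookkeeping consequence of the preceding lemmas, and its role is to record the useful working principle that in all subsequent sections it suffices to establish injectivity of $\cat{can}$ in order to conclude cc-duality. This is exactly the reduction that the later results (on compact-by-abelian, connected nilpotent, discrete icc, and connected semisimple groups) will exploit.
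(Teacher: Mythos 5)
Your proposal is correct and follows exactly the paper's own argument, which likewise just combines \Cref{le:surj} (automatic surjectivity) with \Cref{le:bijenough} (bijectivity suffices). The paper states this in one line; your expanded version adds nothing new but is accurate.
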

\begin{proof}
  Apply \Cref{le:surj} and \Cref{le:bijenough} jointly.
\end{proof}

\begin{lemma}\label{le:iscl}
  Let $A$ be a $C^*$-algebra and $Z\subseteq A$ its center. The map $\widehat{A}\to \widehat{Z}$ defined by
  \begin{equation*}
    \widehat{A}\ni \rho\mapsto \chi\in \widehat{Z}\text{ such that }\rho|_Z\cong \chi^{\oplus S}
  \end{equation*}
  is closed with respect to the Fell topologies.
\end{lemma}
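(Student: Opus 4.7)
My plan is to characterise the Fell-closed subsets of both $\widehat{A}$ and $\widehat{Z}$ as hulls of closed two-sided ideals and then verify that the image of such a hull under the canonical map is again the hull of the expected ideal.

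Recall that every Fell-closed subset of $\widehat{A}$ is of the form $\mathrm{hull}(I) := \{\rho\in\widehat{A}:\rho|_I=0\}$ for a closed two-sided ideal $I\subseteq A$; the analogue holds for $\widehat{Z}$. Writing $f:\widehat{A}\to\widehat{Z}$ for the central-character map, the goal is to establish
\[
f(\mathrm{hull}(I)) = \mathrm{hull}(I\cap Z).
\]
The containment $\subseteq$ is immediate: if $\rho\in\mathrm{hull}(I)$ and $z\in I\cap Z$, then $\chi_\rho(z)\cdot\id = \rho(z) = 0$, whence $\chi_\rho(z)=0$.

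The reverse containment is the substantive step. Fix $\chi\in\mathrm{hull}(I\cap Z)$, and set $B := A/I$ together with $\zeta := (Z+I)/I \cong Z/(Z\cap I)$; then $\zeta$ is a commutative $C^*$-subalgebra sitting in the center of $B$, and $\chi$ descends to a character (equivalently, a pure state) $\bar\chi$ on $\zeta$. I would produce an irreducible representation of $B$ with $\rho|_\zeta = \bar\chi\cdot\id$ by an extremal-state argument: $\bar\chi$ extends to some state of $B$ by the Hahn--Banach-type extension theorem for $C^*$-subalgebras, and the set of all such extensions is convex and weak-$*$ compact, hence has an extreme point $\phi$ by Krein--Milman. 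Any extreme $\phi$ is in fact a pure state of $B$: a convex decomposition $\phi = \tfrac12(\phi_1+\phi_2)$ by states of $B$ restricts to a decomposition of the pure state $\bar\chi$ on the commutative algebra $\zeta$, forcing $\phi_i|_\zeta = \bar\chi$ and hence $\phi_i$ into the extension set, so extremality gives $\phi_1=\phi_2=\phi$. The GNS representation $\pi_\phi$ is then irreducible, and Schur's lemma forces $\pi_\phi(z)\in\bC\cdot\id$ for every $z\in\zeta$, the scalar being pinned down by $\langle\pi_\phi(z)\xi_\phi,\xi_\phi\rangle = \bar\chi(z)$. Pulling $\pi_\phi$ back along $A\twoheadrightarrow B$ supplies $\rho\in\mathrm{hull}(I)$ with $\chi_\rho=\chi$, completing the reverse inclusion.

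The main technical wrinkle to navigate is non-unitality of $A$ (and hence possibly of $B$): strictly speaking one must either pass to a unitisation and track what happens to the centres and their duals, or work in the quasi-state-space formalism where the convexity and compactness claims still go through. In the unital case everything above is essentially routine, so I would expect that unitisation bookkeeping — together with the checking that $\pi_\phi(z) = \bar\chi(z)\id$ really holds identically on $\zeta$, not merely in expectation against $\xi_\phi$ — to be the only steps that are not entirely automatic.
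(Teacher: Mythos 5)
Your proof is correct and follows essentially the same route as the paper's: both characterize the Fell-closed subsets of $\widehat{A}$ as the hulls $\widehat{A/I}$ of closed two-sided ideals and show that the image of such a set under the central-character map is exactly $\widehat{Z/(Z\cap I)}$, hence closed. The only difference is that where you reprove the key extension step (every character of $Z/(Z\cap I)$ is realized by an irreducible representation of $A/I$) by hand via Krein--Milman on the compact convex set of state extensions, the paper simply cites \cite[Proposition 2.10.2]{dixc}, whose standard proof is precisely your argument, quasi-state bookkeeping included.
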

\begin{proof}
  It follows from \cite[\S 3.2.2]{dixc} that the closed subsets of $\widehat{A}$ are in bijection with the closed two-sided ideals, via
  \begin{equation*}
    \text{ideal }I\mapsto \widehat{A/I}. 
  \end{equation*}
  Consider such a closed subset $\widehat{A/I}\subseteq \widehat{A}$, consisting of precisely those irreducible representations whose kernels contain an ideal $I$. The kernels of the restrictions
  \begin{equation*}
    \rho|_Z,\ \rho\in \widehat{A/I}
  \end{equation*}
  contain $Z\cap I$, and on the other hand, because $Z/Z\cap I$ embeds into $A/I$, {\it every} character of $Z/Z\cap I$ can be obtained in this manner \cite[Proposition 2.10.2]{dixc}. In short, the image of the arbitrary closed subset $\widehat{A/I}\subseteq \widehat{A}$ through the map $\widehat{A}\to \widehat{Z}$ in the statement is nothing but 
  \begin{equation*}
    \widehat{Z/Z\cap I}\subseteq \widehat{Z}. 
  \end{equation*}
  This is a closed set, finishing the proof.
\end{proof}

According to the already-cited \cite[\S 22, discussion preceding Example 1]{munk}, {\it open} (as opposed to closed) continuous surjections are also quotient maps. Unlike closure though, openness is not automatic for the maps $\widehat{A}\to \widehat{Z}$ discussed in \Cref{le:iscl} (even for groups).

\begin{example}\label{ex:notop}
  For an example of a locally compact group $G$ with $\widehat{G}\to\widehat{Z(G)}$ not open it will suffice to consider $G$
  \begin{itemize}
  \item with {\it Kazhdan's property (T)} \cite[Definition 1.1.3]{bdv}, so that the trivial representation constitutes an open point of $\widehat{G}$;
  \item and with discrete infinite center, ensuring that the trivial character is {\it not} open in $\widehat{Z(G)}$. 
  \end{itemize}
  A concrete example is the universal cover $\widetilde{Sp(4,\bR)}$ of the $4\times 4$ symplectic group: it has property (T) by \cite[Theorem 6.8]{hkl} (where $Sp(4)$ is denoted by $Sp(2)$), and its center is easily computed as $\bZ$.
\end{example}

\begin{lemma}\label{le:condiscc}
  Let $G$ be a connected locally compact group and $N\trianglelefteq G$ a closed, normal, discrete subgroup. The quotient $\pi:G\to G/N$ then induces a surjection between the centers of the two groups.
\end{lemma}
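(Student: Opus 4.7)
The plan is to produce, for every $\bar z\in Z(G/N)$, a lift $z\in G$ that already lies in $Z(G)$. The two-step strategy hinges on a single topological mechanism: any continuous map from a connected space into a discrete space is constant.

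First I would record the preliminary observation that $N\subseteq Z(G)$. Indeed, for fixed $n\in N$ the conjugation map $c_n:G\to N$ defined by $g\mapsto gng^{-1}$ is continuous (and lands in $N$ because $N$ is normal), so connectedness of $G$ together with discreteness of $N$ forces $c_n$ to be constant with value $c_n(e)=n$. Thus every $n\in N$ is central; this will be used implicitly to make sense of commutators like $[g,z]$ below without worrying about which side of $z$ an element of $N$ sits on.

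Now let $\bar z\in Z(G/N)$ and pick any lift $z\in G$, i.e. an element with $\pi(z)=\bar z$. Since $\bar z$ is central in $G/N$, for every $g\in G$ the commutator $[g,z]=gzg^{-1}z^{-1}$ maps to the identity in $G/N$, hence lies in $N$. The map
\begin{equation*}
  \varphi_z:G\longrightarrow N,\qquad g\mapsto gzg^{-1}z^{-1},
\end{equation*}
is therefore well-defined and continuous. Because $G$ is connected and $N$ is discrete, $\varphi_z$ must be constant, and its value at $g=e$ is the identity; consequently $gzg^{-1}=z$ for all $g\in G$, i.e. $z\in Z(G)$. Since $\pi(z)=\bar z$, this exhibits the desired lift, proving surjectivity of $Z(G)\to Z(G/N)$.

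There is no real obstacle to overcome beyond verifying these continuity and normality bookkeeping points; the entire argument rests on the ``connected maps to discrete is constant'' principle, applied twice (once to show $N$ is central, once to the commutator map $\varphi_z$). The only mild subtlety is to make sure the commutator map is genuinely continuous as a map \emph{into the discrete space $N$}, but this is automatic since it is continuous into $G$ and its image sits in the (closed) subspace $N$.
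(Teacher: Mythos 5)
Your proof is correct, and it takes a genuinely different and more elementary route than the paper. The paper invokes the structure theory of connected locally compact groups: it realizes $G$ and $G/N$ as pro-Lie groups, uses the fact that the exponential images of their (pro-)Lie algebras generate dense subgroups to characterize centrality as triviality of the adjoint action on $Lie(\cdot)$, and then observes that $\pi$ identifies $Lie(G)$ with $Lie(G/N)$ because its kernel is discrete. You instead apply the ``continuous map from a connected space to a discrete space is constant'' principle directly to the commutator map $\varphi_z:g\mapsto gzg^{-1}z^{-1}$, which lands in $N$ precisely because $\bar z$ is central in $G/N$; since $\varphi_z(e)=e$, the lift $z$ is already central in $G$. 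This argument is shorter, avoids all pro-Lie machinery, and in fact proves the statement for an arbitrary connected topological group (local compactness and even closedness of $N$ play no role). Two minor remarks: your preliminary step showing $N\subseteq Z(G)$ is the same constancy argument but is not actually needed for the main computation, since $[g,z]\in N$ follows solely from $\bar z\in Z(G/N)$; and the continuity of the corestricted map $G\to N$ is, as you say, automatic from continuity into $G$ together with the definition of the subspace topology on $N$.
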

\begin{proof}
  That $\pi$ maps center to center follows from surjectivity, so the substance of the claim is that the restriction
  \begin{equation*}
    \pi|_{Z:=Z(G)}:Z\to Z(G/N)
  \end{equation*}
  is again onto. 

  Connected locally compact groups are the inverse limit of their Lie-group quotients by compact normal subgroups contained in arbitrarily small open neighborhoods of $1$ \cite[\S 4.6, Theorem]{mz}. In particular, they are all {\it pro-Lie groups} in the sense of \cite[pp.161-162, equivalent definitions A, B and C]{hm-pro}.
  
  Because $G$ and $G/N$ are connected pro-Lie groups their Lie algebras (\cite[Definition 2.6]{hm-pro})
  \begin{equation*}
    Lie(G) := \mathrm{Hom}((\bR,+),G)\text{ with the compact-open topology}
  \end{equation*}
  (and similarly for $G/N$) generate dense subgroups via the exponential maps \cite[Corollary 4.22 (i)]{hm-pro}. It follows that an element of $G$ or $G/N$ is central if and only if it acts trivially on the respective Lie algebra via the adjoint action (\cite[Definition 2.27 and subsequent discussion]{hm-pro}), supplying the two outer equivalences in \Cref{eq:gpig} below.
 
  Because $\pi:G\to G/N$ has discrete kernel it identifies the two Lie algebras, securing the middle equivalence; all in all, for $g\in G$ we have
  \begin{align*}
    \pi(g)\text{ is central}&\iff Ad_{\pi(g)}=\id\text{ on }Lie(G/N)\\
                            &\iff Ad_{g}=\id\text{ on }Lie(G)\numberthis \label{eq:gpig}\\
                            &\iff g\text{ is central in }G.
  \end{align*}
  This concludes the proof.
\end{proof}

In general, neither the connectedness of $G$ nor the discreteness of $N$ can be left out in \Cref{le:condiscc}:

\begin{example}\label{ex:heis}
  Two incarnations of the Heisenberg group will illustrate both claims. First, in its real version
  \begin{equation}\label{eq:heisr}
    H=\left\{
      \begin{pmatrix}
        1&x&y\\
        0&1&z\\
        0&0&1
      \end{pmatrix}\ \bigg|\ x,y,z\in \bR
    \right\}
  \end{equation}
  it shows that the discreteness of $N$ is necessary, as we can take $G=H$ and
  \begin{equation}\label{eq:ncent}
    N:=Z(G) = \left\{
    \begin{pmatrix}
        1&0&y\\
        0&1&0\\
        0&0&1
      \end{pmatrix}
    \right\}.
  \end{equation}
  The quotient $G/N$ is then (isomorphic to) the abelian group $(\bR^2,+)$, so $\pi$ does {\it not} induce a surjection on centers.
  
  On the other hand, consider the discrete version of $H$, defined as \Cref{eq:heisr} but with entries ranging only over the integers. Taking $G=H$ and the center \Cref{eq:ncent} for $N$ again we once more have an abelian quotient $G/N$.

  In a variation of this last example we can even take $G$ finite, by allowing the elements $x$, $y$ and $z$ of \Cref{eq:heisr} to range only over $\bZ/n$ for some $n$.
\end{example}

A number of the preceding results now allow us to lift up cc-duality from quotients.

\begin{corollary}\label{cor:quotenough}
  Let $G$ be a connected locally compact group and $N\trianglelefteq G$ a closed, normal, discrete subgroup. If $G/N$ is cc-dual then so is $G$.
\end{corollary}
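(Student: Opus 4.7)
By \Cref{cor:injenough}, the task reduces to showing that $\cat{can}_G:C(G)\to\widehat{Z(G)}$ is injective; the plan is to descend this injectivity from the assumed cc-duality of $G/N$ via the quotient map $\pi:G\to G/N$. Two preliminary observations are needed. First, $N\subseteq Z(G)$: for each $n\in N$, the continuous conjugation map $g\mapsto gng^{-1}$ lands in $N$ by normality and sends the connected space $G$ into the discrete set $N$, so it is constantly equal to $n$. Second, every element of $C(G)$ can be represented as a single generator $g_\sigma$ for some $\sigma\in\widehat{G}$: the inverse relation in \Cref{def:chain} removes negative powers, while the weak-containment relation collapses a product $g_{\rho_1}\cdots g_{\rho_k}$ by induction, using that any nonzero unitary $G$-representation weakly contains some irreducible (the image of $C^*(G)$ under the given representation is a nonzero $C^*$-algebra, hence admits an irreducible $*$-representation).

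Given these preparations, suppose $\cat{can}_G(g_\rho)=1$, i.e.\ the central character $\chi_\rho$ is trivial on $Z(G)$. In particular $\chi_\rho$ is trivial on $N\subseteq Z(G)$, so $N$ acts as the identity in $\rho$, and $\rho=\bar\rho\circ\pi$ for a unique $\bar\rho\in\widehat{G/N}$. By \Cref{le:condiscc}, $\pi$ surjects $Z(G)$ onto $Z(G/N)$ with kernel $Z(G)\cap N=N$, so the trivial character $\chi_\rho$ descends to the trivial character $\chi_{\bar\rho}$ on $Z(G/N)$. The assumed cc-duality of $G/N$ then forces $g_{\bar\rho}=1$ in $C(G/N)$, and pushing along the functorial morphism $C(G/N)\to C(G)$ of \Cref{le:fnct} --- which by construction sends $g_{\bar\rho}$ to $g_{\bar\rho\circ\pi}=g_\rho$ --- we conclude $g_\rho=1$ in $C(G)$.

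The expected main obstacle, and essentially the only step beyond bookkeeping, is the single-generator reduction in the preliminary paragraph; the rest is assembly of already-established tools: \Cref{cor:injenough} to focus on injectivity, the centrality $N\subseteq Z(G)$ together with \Cref{le:condiscc} to transport central characters cleanly across $\pi$, and \Cref{le:fnct} to push the triviality $g_{\bar\rho}=1$ back up from $C(G/N)$ to $C(G)$.
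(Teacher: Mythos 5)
Your proof is correct and follows essentially the same route as the paper's: reduce to injectivity via \Cref{cor:injenough}, note $N$ is central since it is discrete normal in a connected group, descend $\rho$ to $\widehat{G/N}$, and pull triviality back through the functorial map $C(G/N)\to C(G)$ of \Cref{le:fnct}. Your two preliminary observations are sound but already available in the paper --- the single-generator fact is built into \Cref{def:chain}, and your explicit appeal to \Cref{le:condiscc} to see that $\chi_{\bar\rho}$ is trivial on all of $Z(G/N)$ is a detail the paper leaves implicit.
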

\begin{proof}
  Suppose $G/N$ is cc-dual. According to \Cref{cor:injenough}, it will be enough to show that an irreducible representation $\rho\in\widehat{G}$ is trivial in the chain group $C(G)$ as soon as it is trivial on the center. Now, because $N$ is a discrete normal subgroup of a {\it connected} group, it must be central. It follows that $\rho$ is trivial on $N$ and hence
  \begin{equation*}
    \rho\in\widehat{G/N}\subseteq \widehat{G}. 
  \end{equation*}
  But then, by the cc-duality assumption, $\rho$ is trivial in $C(G/N)$, which in turn maps to $C(G)$ via the contravariant functoriality noted in \Cref{le:fnct}. This finishes the proof.
\end{proof}

It should come as no surprise that The passage from $\mathrm{Rep}(G)$ to $C(G)$ loses much information, as the cc-duality of compact groups \cite[Theorem 3.1]{mug} suggests. Another incarnation of this information-loss phenomenon is that $C(G)$ is only aware of the {\it reduced} dual of $G$; following \cite[Definition 18.3.1]{dixc} or \cite[Appendix H]{hrp}:

\begin{definition}\label{def:redd}
  For a locally compact group $G$ the {\it reduced dual} $\widehat{G}_{red}$ is the set of isomorphism classes of irreducible unitary representations weakly contained in the regular representation $L^2(G)$ with respect to a Haar measure.

  Equivalently \cite[\S 18.3.2]{dixc}, these are the irreducible unitary representations of the reduced (as opposed to full or maximal) group $C^*$-algebra $C^*_{red}(G)$ (for full/reduced group $C^*$-algebras see \cite[Definition 6]{hrp} or \cite[Definition F.4.6]{bdv}).
\end{definition}

The observation alluded to above is

\begin{proposition}\label{pr:redd}
  For any locally compact group $G$, the canonical map
  \begin{equation}\label{eq:cgred}
    \widehat{G}_{red}\subset \widehat{G}\to C(G)
  \end{equation}
  is onto.
\end{proposition}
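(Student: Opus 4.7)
The plan is to invoke Fell's absorption principle: for any unitary representation $\pi$ of $G$ and the left regular representation $\lambda$ on $L^2(G)$, the tensor product $\pi\otimes\lambda$ is unitarily equivalent to a multiple of $\lambda$, and hence weakly equivalent to $\lambda$. This, together with the self-conjugacy $\lambda^*\cong \lambda$ (immediate from identifying $\overline{L^2(G)}$ with $L^2(G)$ via pointwise complex conjugation), supplies all of the analytic input; the rest is formal manipulation of the generators and relations of $C(G)$.

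First I would record a purely formal consequence: the image of the map \Cref{eq:cgred} is a \emph{subgroup} of $C(G)$. Closure under multiplication follows from Fell absorption: for $\sigma,\sigma'\in\widehat{G}_{red}$ one has $\sigma\otimes\sigma'\preceq \sigma\otimes \lambda$, which is a multiple of $\lambda$, so every irreducible $\sigma''\preceq \sigma\otimes \sigma'$ lies in $\widehat{G}_{red}$, and the defining relation \Cref{def:chain}\cref{item:2} gives $g_{\sigma''}=g_\sigma g_{\sigma'}$. Closure under inversion follows from $\sigma^*\preceq \lambda^*\cong \lambda$ together with relation \Cref{def:chain}\cref{item:3}. (As in the proof of \Cref{le:surj}, the existence of irreducibles weakly contained in a nonzero representation is \cite[Proposition F.2.7]{bdv}.) It thus suffices to show that every generator $g_\rho$, $\rho\in \widehat{G}$, lies in this subgroup.

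To that end I would fix any $\sigma\in\widehat{G}_{red}$ (nonempty, since $\lambda$ weakly contains irreducibles). By Fell absorption, $\rho\otimes\sigma \preceq \rho\otimes\lambda$, which is a multiple of $\lambda$, so every irreducible $\tau\preceq \rho\otimes\sigma$ automatically lies in $\widehat{G}_{red}$. The chain relation then yields $g_\tau = g_\rho g_\sigma$, whence
\[
g_\rho \;=\; g_\tau \, g_\sigma^{-1} \;=\; g_\tau \, g_{\sigma^*}
\]
with both $\tau$ and $\sigma^*$ in $\widehat{G}_{red}$. Hence $g_\rho$ lies in the subgroup generated by the image, completing the proof.

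The only step that is not purely formal is checking Fell's absorption principle and the equivalence $\lambda^*\cong \lambda$ for an arbitrary locally compact group; both are classical and require no hypotheses on $G$ beyond local compactness, so I do not anticipate any real obstacle.
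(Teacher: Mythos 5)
Your proposal is correct and follows essentially the same route as the paper: both rest on Fell's absorption property $\rho\otimes\lambda_G\cong\lambda_G^{\oplus\dim\rho}$ and the self-contragredience of $\lambda_G$ to show that the image of $\widehat{G}_{red}$ in $C(G)$ is a subgroup absorbing every generator $g_\rho$. The only difference is cosmetic (you write $g_\rho=g_\tau g_{\sigma^*}$ explicitly where the paper phrases it as $\rho\cdot C(G)_{red}\subseteq C(G)_{red}$ for the subgroup $C(G)_{red}$), so there is nothing to add.
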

\begin{proof}
  This follows, essentially, from the absorption property of the left regular representation $\Lambda_G$ of $G$ with respect to a left Haar measure \cite[Corollary E.2.6 (ii)]{bdv}: for any unitary representation $\rho$,
  \begin{equation}\label{eq:abs}
    \rho \otimes \lambda_G\cong \lambda_G^{\oplus \dim \rho}.
  \end{equation}
  Fix an arbitrary $\rho\in \widehat{G}$ and also an irreducible $\lambda\in \widehat{G}_{red}$ (i.e. weakly contained in $\lambda_G$). We then have
  \begin{equation}\label{eq:multprec}
    \rho\otimes \lambda \preceq \rho\otimes \lambda_G\preceq \lambda_G,
  \end{equation}
  where the first weak containment follows from the Fell continuity of tensor products \cite[Proposition F.3.2]{bdv} and the second is a consequence of \Cref{eq:abs}. In particular, any irreducible weakly contained in the left-hand side $\rho\otimes \lambda$ of \Cref{eq:multprec} will belong to $\widehat{G}_{red}$. Denote, temporarily, the image of \Cref{eq:cgred} by $C(G)_{red}$; the preceding discussion then amounts to
  \begin{equation}\label{eq:rcg}
    \rho\cdot C(G)_{red} \subseteq C(G)_{red}. 
  \end{equation}
  Next, observe that $C(G)_{red}$ is
  \begin{itemize}
  \item closed under inverses, by condition \Cref{item:3} of \Cref{def:chain}, because $\lambda_G$ is self-contragredient and hence $\widehat{G}_{red}$ is invariant under the contragredient operation;
  \item closed under products, because the tensor product descends to representations of $C^*_{red}(G)$ and hence $\widehat{G}_{red}$ is closed under taking simple constituents of (i.e. irreducible representations weakly contained in) tensor products.
  \end{itemize}
  $C(G)_{red}\subseteq C(G)$ is, in other words, a subgroup, so \Cref{eq:rcg} shows that the arbitrary $\rho\in C(G)$ is a member thereof.
\end{proof}

\subsection{Almost-connected groups and their Lie quotients}

We will see that as far as {\it connected} locally compact groups $G$ go, the study of the chain group $C(G)$ can be reduced to Lie groups. In fact, this can be extended slightly; consider the following class of groups (following, for instance, \cite[p. viii]{hm}).

\begin{definition}
  A topological group $G$ is {\it almost-connected} if its quotient $G/G_0$ by the connected identity component is compact.
\end{definition}

To continue, recall from \cite[\S 4.6, Theorem]{mz} that an almost-connected locally compact group $G$ is {\it pro-lie} in the sense of \cite[Theorem 3.39]{hm-pro}:
\begin{itemize}
\item every neighborhood $U\subset G$ of the identity
\item contains a compact normal subgroup $K\trianglelefteq G$
\item such that $G/K$ is Lie;
\item and furthermore, as a consequence, $G$ is the limit (in the category of topological spaces) of the filtered system of Lie quotients $G/K$ consisting of the surjective connecting maps
  \begin{equation*}
    G/K_1\to G/K_2
  \end{equation*}
  resulting from inclusions $K_1\subset K_2$. 
\end{itemize}
We write $\cat{lieq}(G)$ for the set of compact normal subgroups $K\trianglelefteq G$ with corresponding Lie quotients $G/K$. Dualizing this inverse-limit expression
\begin{equation*}
  G\cong \varprojlim_{K\in \cat{lieq}(G)} G/K,
\end{equation*}
we have

\begin{theorem}\label{th:speclim}
  Let $G$ be an almost-connected locally compact group. The canonical inclusions
  \begin{equation*}
    \widehat{G/K}\subseteq \widehat{G},\ K\in \cat{lieq}(G)
  \end{equation*}
  realize $\widehat{G}$ as a filtered colimit
  \begin{equation}\label{eq:injlim}
    \widehat{G}\cong \varinjlim_{K\in \cat{lieq}(G)}\widehat{G/K}
  \end{equation}
  in the category of topological spaces. 
\end{theorem}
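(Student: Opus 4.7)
The theorem has two ingredients: the set-theoretic equality $\widehat{G}=\bigcup_{K\in\cat{lieq}(G)}\widehat{G/K}$, and the agreement of the Fell topology on $\widehat{G}$ with the filtered-colimit topology from the $\widehat{G/K}$. I would handle these in sequence.

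For the set-theoretic equality, I would show every $\rho\in\widehat{G}$ factors through some Lie quotient. By second-countability and the pro-Lie structure of $G$ \cite[\S 4.6, Theorem]{mz}, pick a decreasing sequence $(K_n)\subseteq\cat{lieq}(G)$ forming a neighborhood basis at the identity (so $\bigcap_n K_n=\{1\}$). For any nonzero $v$ in the representation space $H_\rho$, strong continuity of $\rho$ forces the normalized-Haar averages $v_n:=\int_{K_n}\rho(k)v\,dk$ to converge to $v$ in norm, so $v_n\neq 0$ for $n$ large. Each $v_n$ is $K_n$-fixed; the space $H_\rho^{K_n}$ of such vectors is $G$-invariant by normality of $K_n$, and equals $H_\rho$ by irreducibility. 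So $\rho(K_n)=\{1\}$, i.e.\ $\rho\in\widehat{G/K_n}$.

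For the topological part, each inclusion $\widehat{G/K}\hookrightarrow\widehat{G}$ is a closed embedding of Fell spectra: the quotient $p_K:C^*(G)\twoheadrightarrow C^*(G/K)$ identifies $\widehat{G/K}$ with the hull of $\ker p_K$ via the correspondence between closed subsets of the spectrum and closed two-sided ideals \cite[\S 3.2.2]{dixc}. This immediately shows every Fell-open set of $\widehat{G}$ is open in the colimit topology. The substantive step is the reverse: any $V\subseteq\widehat{G}$ with $V_K:=V\cap\widehat{G/K}$ Fell-closed for every $K$ is Fell-closed in $\widehat{G}$. Write $V_K=\mathrm{hull}(J_K)$ for a closed ideal $J_K\trianglelefteq C^*(G/K)$; the compatibility $V_{K_2}=V_{K_1}\cap\widehat{G/K_2}$ when $K_1\subseteq K_2$ translates, on lifting to $\tilde J_K:=p_K^{-1}(J_K)\trianglelefteq C^*(G)$, into $\tilde J_{K_2}=\tilde J_{K_1}+\ker p_{K_2}$. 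I would then verify $V=\mathrm{hull}(\tilde J)$ for $\tilde J:=\bigcap_K\tilde J_K$.

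The inclusion $V\subseteq\mathrm{hull}(\tilde J)$ is immediate. The reverse is the main obstacle, and reduces to showing that $p_{K_0}:\tilde J\to J_{K_0}$ is surjective for every $K_0$; granting this one gets $\tilde J_{K_0}=\tilde J+\ker p_{K_0}$, so any $\rho\in\mathrm{hull}(\tilde J)$ lying in $\widehat{G/K_0}$ has $\ker\rho\supseteq\tilde J_{K_0}$ and hence belongs to $V_{K_0}$. I would prove the surjectivity by iterative norm-controlled lifting along a cofinal sequence $(K_n)\downarrow\{1\}$: given $y\in J_{K_0}$, start from $x_{n_0}\in\tilde J_{K_{n_0}}$ with $p_{K_0}(x_{n_0})=y$, and inductively pick $x_{n+1}=x_n+h_n$ where $h_n\in\ker p_{K_0}$ is chosen so that $p_{K_{n+1}}(x_{n+1})\in J_{K_{n+1}}$ and $\|h_n\|\leq 2^{-n}$. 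Existence of such $h_n$ uses surjectivity of the induced map $\ker p_{K_0}\twoheadrightarrow\ker q_{K_{n+1},K_0}$ (with $q_{K_{n+1},K_0}:C^*(G/K_{n+1})\to C^*(G/K_0)$ the natural quotient) together with compatibility of the $J_K$'s, and the norm bound relies on the standard quotient-norm estimates for $C^*$-algebra surjections. The Cauchy limit $x=\lim x_n$ then lies in $\bigcap_n\tilde J_{K_n}=\tilde J$ and satisfies $p_{K_0}(x)=y$.
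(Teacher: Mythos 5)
Your set-theoretic half is correct in substance and takes a genuinely different, more elementary route than the paper's: the paper fixes $K\in \cat{lieq}(G)$, invokes the Mackey-type \Cref{le:isot} to see that $\rho|_K$ lives on finitely many finite-dimensional irreducibles of $K$, and then manufactures a $G$-normal subgroup $N_{small}\in\cat{lieq}(G)$ on which $\rho$ is trivial; your Haar-averaging/fixed-vector argument reaches the same conclusion in a few lines. One quibble: you invoke second-countability to extract a decreasing sequence $(K_n)$ cofinal among identity neighborhoods, but the theorem assumes only that $G$ is almost-connected. This is harmless --- run the same averaging argument over the net $(v_K)_{K\in\cat{lieq}(G)}$, which converges to $v$ because every identity neighborhood contains some $K\in\cat{lieq}(G)$ --- but as written the hypothesis is not available to you.

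The topological half has a genuine gap, located at the norm-controlled lifting. The correction $h_n$ must move $x_n$ into $\tilde J_{K_{n+1}}$, so necessarily $\|h_n\|\ge \mathrm{dist}(x_n,\tilde J_{K_{n+1}})$, and no quotient-norm estimate for $C^*$-surjections bounds that distance by $2^{-n}$: the defect of $x_n$, i.e.\ its class in $\tilde J_{K_n}/\tilde J_{K_{n+1}}$, is simply whatever it is, and you have no mechanism for shrinking it. More decisively, your route to the surjectivity of $\tilde J\to J_{K_0}$ uses only the formal structure of a filtered system of $C^*$-quotients with $\widehat{A}=\bigcup_K\widehat{A_K}$ and compatible hull ideals $J_K$ --- and in that generality the conclusion is false. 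Take $A=C(X)$ with $X=\{0\}\cup\{1/n:\ n\ge 1\}$, $A_n=C(F_n)$ for the closed sets $F_n=\{0\}\cup\{1,1/2,\dots,1/n\}$, and $V=\{1/n:\ n\ge 1\}$: each $V\cap F_n$ is closed in $F_n$, yet $V$ is not closed in $X=\widehat{A}$, and correspondingly $\tilde J=\{f:\ f|_{\overline{V}}=0\}$ fails to surject onto $J_{K_0}$ (a lift would have to vanish at the limit point $0$). So any correct proof must input something beyond the inverse-limit formalism. That something is the fact that each $\widehat{G/K}$ is \emph{open} in $\widehat{G}$, after which the colimit claim is automatic (a filtered union of open subspaces always carries the colimit topology); the paper extracts openness from the finite-orbit structure of $\rho|_K$, namely the continuous map $\widehat{G}\to\widehat{K}/G$ to a discrete orbit space, of which $\widehat{G/K}$ is the preimage of a point. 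Your outline neither establishes nor uses this openness, and without it the lifting scheme cannot close.
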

\begin{proof}
  The goal is twofold:
  \begin{enumerate}[(a)]
  \item\label{item:7} proving \Cref{eq:injlim} set-theoretically, i.e. showing that $\widehat{G}$ is the union of its subsets $\widehat{G/K}$ as $K$ ranges over $\cat{lieq}(G)$;
  \item\label{item:8} and showing that the Fell topology of $\widehat{G}$ is generated by the underlying Fell topologies of its subspaces $\widehat{G/K}$ in the sense, say, of \cite[discussion following Proposition A.5]{hatch}: a subset $U\subset \widehat{G}$ is open if and only if all intersections $U\cap \widehat{G/K}$ are. 
  \end{enumerate}
  
  \vspace{.5cm}

  {\bf \Cref{item:7}: $\widehat{G}$ is the union of $\widehat{G/K}$.} Consider an irreducible representation $\rho\in \widehat{G}$ and fix some $K\in \cat{lieq}(G)$. By \Cref{le:isot} we have a decomposition
  \begin{equation*}
    \rho|_K\cong \bigoplus_{\pi\in S}\pi^{\oplus n_{\pi}}
  \end{equation*}
  for a finite set $S\subseteq \widehat{K}$. Since irreducible unitary representations of compact groups are finite-dimensional \cite[Theorem 15.1.3]{dixc}, $\pi$ factors through some Lie group quotient
  \begin{equation*}
    K\to \overline{K}\subseteq U(n) 
  \end{equation*}
  of $K$.

  The kernel $N$ of $K\to \overline{K}$ is normal in the compact group $K$. Now, the proof of \cite[Theorem 2]{iw} shows that the connected component $G_0$ is contained in the product $K Z_G(K)$, so any normal subgroup of $K$ will automatically be normalized by $G_0$. On the other hand, the quotient
  \begin{equation*}
    G/G_0\to G/G_0K\cong (G_0K/G_0)/(G/G_0)
  \end{equation*}
  is both profinite (as a quotient of a profinite group \cite[Chapter 1, Exercise E1.13]{hm}) and Lie, so it must be finite. This means that $K$ maps to a finite-index subgroup of the profinite group $G/G_0$; if $s_i\in G$, $1\le i\le n$ is a system of representatives for the cosets then
  \begin{equation*}
    N_{small}:=\bigcap_{i=1}^n s_i N s_i^{-1}\trianglelefteq K
  \end{equation*}
  is a normal subgroup
  \begin{itemize}
  \item whose corresponding quotient $K/N_{small}$ is Lie \cite[Lemma 4.7.1]{mz};
  \item which is normalized by the connected component $G_0$, as noted above;
  \item whose image in $G/G_0$ is normal, by construction;
  \item and hence normal in $G$.
  \end{itemize}
  We now have an extension
  \begin{equation*}
    1\to K/N_{small}\to G/N_{small}\to G/K\to 1
  \end{equation*}
  where the outer terms Lie, meaning that the middle term $G/N_{small}$ must be Lie as well \cite[Theorem 7]{iw}. To summarize, $N_{small}$ is a member of $\cat{lieq}(G)$. Since the arbitrary $\rho\in\widehat{G}$ is by construction trivial on $N_{small}$, we are done with \Cref{item:7}.

  \vspace{.5cm}

  {\bf \Cref{item:8}: The Fell topology on $\widehat{G}$ is the colimit of the Fell topologies of $\widehat{G/N}$.} We have already established that we have a union
  \begin{equation*}
    \widehat{G} = \bigcup_{K\in \cat{lieq}(G)}\widehat{G/K}. 
  \end{equation*}
  It now remains to observe that the $\widehat{G/K}$ are all {\it open} in $\widehat{G}$ (in addition to being closed). To see this, recall first \cite[Proposition F.3.4]{bdv} that restricting $G$- to $K$-representations is a Fell-continuous operation; we also saw in the course of the proof of \Cref{le:isot} that \cite[Proposition 4.68]{kt} applies, and hence every restriction
  \begin{equation*}
    \rho|_K,\ \rho\in \widehat{G}
  \end{equation*}
  lives on a (finite) $G$-orbit of the discrete set $\widehat{K}$. But we then have a continuous map
  \begin{equation*}
    \widehat{G}\ni \rho\mapsto \left(\text{orbit where $\rho|_K$ lives}\right)\in \widehat{K}/G,
  \end{equation*}
  to a discrete set, and $\widehat{G/K}$ is nothing but the preimage of the open singleton $\{\text{trivial orbit}\}$. We conclude via the general (and trivial) remark that a filtered union of open subspaces is automatically a colimit topology.
\end{proof}

\begin{remark}
  Under certain circumstances, a ``transitivity of normality'' obtains for topological groups: according to \cite[Theorem 3]{iw},
  \begin{equation*}
    \text{(closed normal) in (compact normal) in (connected)}\Rightarrow \text{normal}.
  \end{equation*}
  The construction of the globally-normal subgroup $N_{small}\trianglelefteq G$ in the proof of \Cref{th:speclim} runs along the same conceptual lines; in fact, were $G$ connected (rather than {\it almost-}connected), we could have simply skipped constructing $N_{small}$ altogether and worked directly with the (already normal, by \cite[Theorem 3]{iw}) group $N$.
\end{remark}

\begin{lemma}\label{le:isot}
  Let $G$ be an almost-connected locally compact group and $K\trianglelefteq G$ a compact normal subgroup.

  For an irreducible representation $\rho\in \widehat{G}$ the restriction $\rho|_K$ is a sum
  \begin{equation*}
    \rho|_K\cong \bigoplus_{\pi\in S}\pi^{\oplus n_{\pi}}
  \end{equation*}
  for a finite subset $S\subseteq \widehat{K}$ and multiplicities $n_{\pi}$. Furthermore, $S$ is a singleton if $G$ is connected. 
\end{lemma}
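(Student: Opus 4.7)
The plan is to invoke Mackey's analysis of restrictions to a normal subgroup, together with a topological argument that exploits compactness of $K$ (discreteness of $\widehat{K}$) and almost-connectedness of $G$.

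First, since $K$ is compact, every irreducible unitary representation of $K$ is finite-dimensional \cite[Theorem 15.1.3]{dixc} and $\widehat{K}$ carries the discrete Fell topology (the regular representation of a compact group decomposes as a discrete sum, and distinct irreducibles are topologically separated in $\widehat{K}$). The restriction $\rho|_K$ therefore decomposes genuinely as a Hilbert-space direct sum
\begin{equation*}
  \rho|_K\cong \bigoplus_{\pi\in S}\pi^{\oplus n_{\pi}}
\end{equation*}
indexed by some subset $S\subseteq \widehat{K}$, rather than as a more general direct integral.

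Second, $G$ acts on $\widehat{K}$ by conjugation, i.e.\ $g\cdot \pi:=\pi(g^{-1}(-)g)$, and by the standard Mackey machinery (the same \cite[Proposition 4.68]{kt} invoked later in the proof of \Cref{th:speclim}) the irreducibility of $\rho$ forces $S$ to be a single $G$-orbit inside $\widehat{K}$, with constant multiplicity $n_\pi=n$ along the orbit. This is the ``single orbit'' half of Mackey's theorem for restrictions to a normal subgroup with discrete dual.

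Third, I argue finiteness of $S$ from almost-connectedness. The conjugation action $G\times\widehat{K}\to\widehat{K}$ is continuous for the Fell topology; since $\widehat{K}$ is discrete, any continuous map from the connected group $G_0$ to $\widehat{K}$ is constant, so $G_0$ fixes every point of $\widehat{K}$. The $G$-action therefore factors through the quotient $G/G_0$, which is compact by hypothesis. For a continuous action of a compact group on a discrete space, every stabilizer is open and hence of finite index, so every orbit is finite. Thus $|S|<\infty$.

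Fourth, the singleton assertion for connected $G$ is immediate from the same argument: if $G=G_0$ then the $G$-action on the discrete set $\widehat{K}$ is trivial, so the orbit $S$ has cardinality one. The only place that requires genuine care is the first step — namely, justifying that the $K$-representation $\rho|_K$ is a true direct sum of irreducibles (rather than a direct integral with possible continuous spectrum); but this is automatic once one notes that for compact $K$ the maximal $C^*$-algebra $C^*(K)$ is a $c_0$-direct sum of matrix algebras, so every $*$-representation is a discrete sum of irreducibles. Everything else is a formal consequence of Mackey together with the topological compactness/connectedness dichotomy.
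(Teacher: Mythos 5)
Your proposal is correct and follows essentially the same route as the paper: both reduce to \cite[Proposition 4.68]{kt} (whose ``regularly embedded'' hypothesis is automatic since $\widehat{K}/G$ is discrete), observe that the connected component $G_0$ must act trivially on the discrete space $\widehat{K}$, and deduce finiteness of the orbit from compactness of $G/G_0$. The extra details you supply --- the genuine direct-sum decomposition via the structure of $C^*(K)$ and the open-stabilizer argument for finiteness of orbits --- are correct elaborations of steps the paper leaves implicit.
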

\begin{proof}
  In the language of \cite[Definition 4.59]{kt}, we have to argue that the restriction $\rho|_K$ {\it lives on} a finite subset $S\subseteq \widehat{K}$, and that that set is a singleton if $G$ is connected.
  
  The conjugation action by $G$ on $K$ induces one on the discrete space $\widehat{K}$, as in \cite[\S 4.8]{kt} (the focus there is on the space $\mathrm{Prim}(K)$ of primitive ideals of $C^*(K)$, but this coincides with $\widehat{K}$ for compact groups). The connected component $G_0$ acts trivially (as it must on any discrete set), while the compact quotient $G/G_0$ acts with finite orbits.

  According to \cite[Proposition 4.68]{kt} the restriction $\rho|_K$ lives on a $G$-orbit of $\widehat{K}$; since we have just established that orbits are finite and singletons when $G=G_0$, we will be done once we show that the first hypothesis of \cite[Proposition 4.68]{kt} is satisfied.

  Specifically, that hypothesis requires that $K\le G$ be {\it regularly embedded} in the sense of \cite[paragraph following Lemma 4.67]{kt}: the quotient space $\widehat{K}/G$ is {\it almost Hausdorff}. Whatever the latter condition (defined in \cite[paragraph before Remark 4.19]{kt}) means, it will certainly be satisfied by the discrete space $\widehat{K}/G$; this finishes the proof.
\end{proof}

Switching back to chain groups, this implies

\begin{corollary}\label{cor:cgcolim}
  The chain group of a connected locally compact group $G$ can be realized as a filtered colimit
  \begin{equation*}
    C(G)\cong \varinjlim_{K\in \cat{lieq}(G)}C(G/K)
  \end{equation*}
  in the category of groups-with-topology via the contravariant functoriality provided by \Cref{le:fnct}.
\end{corollary}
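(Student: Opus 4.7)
The plan is to upgrade the space-level identification $\widehat{G} \cong \varinjlim_K \widehat{G/K}$ from \Cref{th:speclim} to the chain-group quotient. For each $K \in \cat{lieq}(G)$ the surjection $G \twoheadrightarrow G/K$ has dense image, so \Cref{le:fnct} produces a morphism $C(G/K) \to C(G)$, and these are compatible with the filtered system of Lie quotients. The resulting universal map
\begin{equation*}
  \Phi:\ \varinjlim_{K \in \cat{lieq}(G)} C(G/K) \longrightarrow C(G)
\end{equation*}
in the category of groups-with-topology is the candidate isomorphism; what remains is to show that $\Phi$ is a bijection and that the two topologies agree.

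Surjectivity is immediate from \Cref{def:chain}, since every element of $C(G)$ is some $g_\rho$, combined with \Cref{th:speclim}\Cref{item:7}, which places every $\rho \in \widehat{G}$ inside some $\widehat{G/K}$ with $K \in \cat{lieq}(G)$. Injectivity is the substantive step. Given $x \in C(G/K)$ whose image in $C(G)$ is trivial, lift $x$ to the free group on $\widehat{G/K}$; triviality in $C(G)$ means this lift is a finite product of conjugates of defining relators of $C(G)$. Only finitely many $\rho_1, \dots, \rho_n \in \widehat{G}$ appear in that product, and by \Cref{th:speclim}\Cref{item:7} each factors through some $G/K_i$. Setting $K' := K \cap K_1 \cap \cdots \cap K_n$, the diagonal embedding $G/K' \hookrightarrow \prod_i G/K_i$ lands in a Lie group, so $K' \in \cat{lieq}(G)$ with $K' \subseteq K$. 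Since weak containment and the contragredient operation among representations trivial on $K'$ are insensitive to whether one views them over $G$ or over $G/K'$ (their matrix coefficients are identical as functions on the quotient), the very same expression witnesses triviality of $x$ in $C(G/K')$; hence $x$ already vanishes in the colimit.

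For topologies, observe that both $\varinjlim_K C(G/K)$ and $C(G)$ carry quotient topologies from $\widehat{G}$: the latter by construction, and the former by commuting the filtered colimit past the Fell quotient maps $\widehat{G/K} \twoheadrightarrow C(G/K)$ and invoking \Cref{th:speclim}\Cref{item:8}. The group bijection $\Phi$ therefore upgrades automatically to an isomorphism of groups-with-topology.

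The main obstacle is the injectivity step: reformulating triviality in $C(G)$ — a priori a statement about the normal closure of an enormous collection of relators in a free group on an enormous generating set — in terms of finitely-supported data simultaneously visible inside a single Lie quotient. This is exactly what \Cref{th:speclim}\Cref{item:7}, together with the closure of $\cat{lieq}(G)$ under finite intersections (itself an instance of the Lie-Lie-extension stability of \cite[Theorem 7]{iw}), delivers.
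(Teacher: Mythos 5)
Your proposal is correct and follows essentially the same route as the paper: the paper simply observes that the generators-and-relations construction and the quotient topology are themselves colimits and therefore commute with the filtered colimit of \Cref{th:speclim}, citing Mac Lane for the interchange. Your argument is the hands-on verification of that interchange (finitely many generators and relators appearing in any witness of triviality, all visible in a single Lie quotient after intersecting finitely many $K_i$), which is sound and fills in exactly the detail the paper leaves implicit.
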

\begin{proof}
  This is a consequence of \Cref{th:speclim}, since all constructions involved in passing from $\widehat{G}$ to $C(G)$ (the generators-and-relations definition of the group structure and the quotient topology) can be cast as colimits and hence commute with $\varinjlim_K$ \cite[Chapter IX, ending paragraph of \S 8]{mcl}.
\end{proof}

The analogous claim holds on the right-hand, center side of \Cref{def:ccdual}.

\begin{proposition}\label{pr:centlim}
  For a connected locally compact group $G$ the duals of the canonical maps $Z(G)\to Z(G/K)$ express the dual center as the filtered colimit
  \begin{equation*}
    \widehat{Z(G)}\cong \varinjlim_{K\in \cat{lieq}(G)} \widehat{Z(G/K)}
  \end{equation*}
  in the category $\mathrm{TopGp}$ of topological groups.
\end{proposition}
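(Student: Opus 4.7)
The plan is to proceed in two stages: first identify $Z(G)\cong\varprojlim_K Z(G/K)$ as topological groups, then Pontryagin-dualize.

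Stage 1. Since $G$ is connected locally compact, the pro-Lie structure recalled in the proof of \Cref{th:speclim} gives $G\cong\varprojlim_K G/K$ with the compact normal subgroups $K\in\cat{lieq}(G)$ forming a neighborhood basis of $1\in G$; in particular $\bigcap_K K=\{1\}$. Any central $g\in Z(G)$ projects to a central element in every $G/K$, giving a continuous homomorphism $Z(G)\to\varprojlim Z(G/K)$. Conversely, a coherent system $(g_K)_K\in\varprojlim Z(G/K)\subseteq\varprojlim G/K=G$ produces $g\in G$ whose commutator $[g,h]$ with any $h\in G$ vanishes modulo every $K$, so $[g,h]\in\bigcap_K K=\{1\}$ and $g\in Z(G)$. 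The subspace topology on $\varprojlim Z(G/K)\subseteq G$ agrees with its inverse-limit topology, so the identification is a topological-group isomorphism.

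Stage 2 (surjectivity of $\varinjlim_K \widehat{Z(G/K)}\to\widehat{Z(G)}$). Given $\chi\in\widehat{Z(G)}$, the no-small-subgroups property of $\bS^1$ combined with continuity forces $\chi$ to kill $Z(G)\cap K$ for some $K\in\cat{lieq}(G)$: the compact subgroups $Z(G)\cap K$ form a neighborhood basis of $1\in Z(G)$, and the $\chi$-preimage of any NSS-neighborhood of $1\in\bS^1$ must contain one such $Z(G)\cap K$, forcing its image (a subgroup of $\bS^1$) to be trivial. Thus $\chi$ factors through $Z(G)/(Z(G)\cap K)\hookrightarrow Z(G/K)$. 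Uniform continuity extends it to the closure $H_K:=\overline{\mathrm{im}(Z(G)\to Z(G/K))}$ (a closed subgroup of the LCA group $Z(G/K)$), and Pontryagin's character-extension theorem produces some $\tilde\chi\in\widehat{Z(G/K)}$ pulling back to $\chi$.

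The hard part will be injectivity: if $\tilde\chi\in\widehat{Z(G/K)}$ pulls back to zero on $Z(G)$ (equivalently vanishes on $H_K$), one must exhibit some $K'\subseteq K$ in $\cat{lieq}(G)$ with $\tilde\chi$ already vanishing on $\mathrm{im}(Z(G/K')\to Z(G/K))$. My plan is a Mittag-Leffler-style argument: the closures $\overline{\mathrm{im}(Z(G/K')\to Z(G/K))}$ form a decreasing net of closed subgroups of the finite-dimensional Lie group $Z(G/K)$ with intersection $H_K$, while $\ker\tilde\chi$ is a closed subgroup containing $H_K$ whose quotient embeds into $\bS^1$; dimensional and compactness considerations inside the Lie group $Z(G/K)$ should then force some $\overline{\mathrm{im}(Z(G/K'))}$ to fit inside $\ker\tilde\chi$. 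Matching of topologies then comes for free along the same lines as in the proof of \Cref{th:speclim}: both $\widehat{Z(G)}$ and $\varinjlim_K\widehat{Z(G/K)}$ carry the final topology coming from the same filtered system of continuous characters, so the bijection established above is automatically a homeomorphism.
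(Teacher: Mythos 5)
Your Stage 1 is sound and matches the paper's first step in substance: the paper obtains $Z(G)\cong\varprojlim_K Z(G/K)$ formally, by writing each $Z(G/K)$ as an equalizer of the automorphisms induced by elements of $G$ and invoking commutation of limits with limits, while you verify it by hand via $\bigcap_K K=\{1\}$. (Minor point: the $K\in\cat{lieq}(G)$ are contained in arbitrarily small neighborhoods of $1$ but are not themselves a neighborhood basis; what your surjectivity argument actually uses --- that every neighborhood of $1$ in $Z(G)$ contains some $Z(G)\cap K$ --- is nevertheless correct, and since $G\to G/K$ is proper the image of $Z(G)$ in $Z(G/K)$ is already closed, so the character-extension step goes through.)

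The genuine gap is exactly where you flag it: injectivity. What you offer there is a plan rather than a proof, and the general principle it leans on is false. A character of a Lie group trivial on the intersection of a decreasing net of closed subgroups need not be trivial on any member of the net: take $L=\bR$, $H_n=2^n\bZ$ with $\bigcap_n H_n=\{0\}$, and $\chi(x)=e^{2\pi ix/3}$; here dimensions stabilize immediately and nothing is compact, so neither ``dimensional'' nor ``compactness'' considerations can close the argument without substantial extra structure on the specific subgroups $\overline{\mathrm{im}(Z(G/K')\to Z(G/K))}$ (e.g.\ cocompactness of successive images), which you have not established and which is not obviously available. Note also that the set-level injectivity you are after is not a formal consequence of abstract duality, since filtered colimits of locally compact abelian groups are not computed on underlying sets. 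The paper sidesteps all of this: having identified $Z(G)\cong\varprojlim_K Z(G/K)$ as a limit of locally compact abelian groups, it invokes the fact that Pontryagin duality is a contravariant involutive autoequivalence of the category of locally compact abelian groups, which converts that limit into the asserted filtered colimit of duals in one stroke. To salvage your elementary route you would have to prove the stabilization statement for the actual decreasing system of images, or restructure the argument so that pointwise injectivity is never needed.
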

\begin{proof}
  The center $Z(G/K)$ of each $G/K$ can be recovered as the equalizer \cite[Chapter III, \S 4]{mcl} of all of the automorphisms of $G/K$ induced by elements $g\in G$. It follows, from the general principle that limits commute with limits (\cite[Chapter IX, \S 2, dual to (2)]{mcl}), that
  \begin{equation*}
    Z(G)\cong \varprojlim_{K} Z(G/K)
  \end{equation*}
  as topological groups. The conclusion follows by taking duals and using the fact that duality $\bullet \mapsto \widehat{\bullet}$ is a contravariant, involutive autoequivalence of the category of locally compact abelian groups \cite[Proposition 7.11, Theorem 7.63]{hm}.
\end{proof}

Aggregating a number of these results, we have

\begin{corollary}\label{cor:lieenough}
  A connected locally compact group $G$ is cc-dual in the sense of \Cref{def:ccdual} if all of its Lie quotients $G/K$ by compact normal subgroups are.
\end{corollary}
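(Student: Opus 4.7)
My plan is as follows. By \Cref{cor:injenough}, it is enough to establish that $\cat{can}:C(G)\to\widehat{Z(G)}$ is injective; I aim to derive this by transporting the cc-duality of the Lie quotients $G/K$, $K\in\cat{lieq}(G)$, through the filtered-colimit presentations of both sides.

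Concretely, the starting point will be the two identifications
\[
  C(G) \cong \varinjlim_{K\in\cat{lieq}(G)} C(G/K), \qquad \widehat{Z(G)} \cong \varinjlim_{K\in\cat{lieq}(G)} \widehat{Z(G/K)},
\]
supplied by \Cref{cor:cgcolim} and \Cref{pr:centlim} respectively. Next, applying the naturality statement of \Cref{le:fnct} to the (surjective, hence dense-image) quotient morphisms $G/K'\to G/K$ for $K'\subseteq K$ in $\cat{lieq}(G)$ will yield a natural transformation between these two colimit diagrams whose components are precisely the maps $\cat{can}_K:C(G/K)\to\widehat{Z(G/K)}$; the universal property of the colimit then identifies the morphism induced on colimits with $\cat{can}:C(G)\to\widehat{Z(G)}$ itself.

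Finally, I invoke the hypothesis that every $\cat{can}_K$ is a (topological) isomorphism, hence in particular a bijection. Since filtered colimits of sets (and thus of groups, via the underlying-set forgetful functor) preserve bijections, the colimit map $\cat{can}$ is bijective; injectivity then follows, completing the argument. Unpacked on elements: if $g\in C(G)$ with $\cat{can}(g)=1$, lift $g$ to some $g_K\in C(G/K)$; naturality plus the filtered-colimit description of $\widehat{Z(G)}$ forces $\cat{can}_K(g_K)$ to become trivial in $\widehat{Z(G/K')}$ for some $K'\subseteq K$ in $\cat{lieq}(G)$, which is $\cat{can}_{K'}$ of the image $g_{K'}$ of $g_K$; cc-duality of $G/K'$ then gives $g_{K'}=1$ in $C(G/K')$ and hence $g=1$ in $C(G)$.

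I do not foresee any substantive obstacle. The one minor wrinkle is that \Cref{cor:cgcolim} expresses $C(G)$ as a colimit in groups-with-topology while \Cref{pr:centlim} works in topological groups, but since only bijectivity of $\cat{can}$ matters (by \Cref{cor:injenough}) one can simply forget topologies and compute both colimits set-theoretically as ascending unions indexed by the filtered poset $\cat{lieq}(G)$.
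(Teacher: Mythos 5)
Your argument is correct and is essentially the paper's own proof, which simply cites \Cref{le:fnct}, \Cref{cor:cgcolim} and \Cref{pr:centlim} and leaves the colimit comparison implicit; you have merely unpacked that one-line deduction, including the (correct) observation that only bijectivity matters so the topological mismatch between the two colimits is harmless.
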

\begin{proof}
  This is a direct consequence of \Cref{le:fnct}, \Cref{cor:cgcolim} and \Cref{pr:centlim}.
\end{proof}

\subsection{Reciprocity for type-I locally compact groups}\label{subse:rec}

\Cref{pr:rec} is not used directly in the sequel, but it seems pertinent to understanding and working with the chain group.

Let $G$ be a type-I, second-countable locally compact group. As explained in \cite[Theorem 14.11.2]{wal2} (or \cite[Theorem 18.8.1]{dixc}), we have a well-defined (up to equivalence) Borel measure $\mu_G$ on $\widehat{G}$ (the {\it Plancherel measure} of $G$) such that the regular representation $L^2(G,\lambda)$ with respect to a left-invariant Haar measure decomposes as a direct integral \cite[\S 14.9]{wal2}
\begin{equation*}
  L^2(G,\lambda)\cong \int_{\widehat{G}} \rho^{\oplus\dim\rho}\ \mathrm{d}\mu_G(\rho). 
\end{equation*}
By \cite[Proposition 8.6.8]{dixc}, those $\rho$ contained in the {\it support} $\mathrm{supp}(\mu_G)$ (i.e. the smallest closed subset of $\widehat{G}$ whose complement has $\mu_G$-measure zero) are precisely the elements of the reduced dual (\Cref{def:redd}). 

The main reciprocity result of \cite{mr-rp} can be recast as follows (`$*$' superscripts denoting contragredient representations, as before).

\begin{proposition}\label{pr:rec}
  Let $G$ be a type-I second-countable locally compact group and
  \begin{equation*}
    \pi_i\in \widehat{G}_{red},\ 1\le i\le 3. 
  \end{equation*}
  If $\pi_3^*\preceq \pi_1\otimes \pi_2$ then
  \begin{equation*}
    \pi_i^*\preceq \pi_j\otimes \pi_k,\ \text{for all permutations }(i,j,k)\text{ of }(1,2,3). 
  \end{equation*}
\end{proposition}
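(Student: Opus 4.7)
The plan is to deduce the claim by translating both sides of the implication into support conditions for Plancherel-theoretic direct integrals, and then invoking the $S_3$-symmetry of the relevant triple measure built into the reciprocity of \cite{mr-rp}. Since $G$ is type-I and second-countable, and since each $\pi_i \in \widehat{G}_{red}$ makes the tensor product $\pi_1 \otimes \pi_2$ weakly contained in the regular representation $\lambda_G$ via absorption \Cref{eq:abs}, the tensor product admits a direct-integral decomposition
\begin{equation*}
  \pi_1 \otimes \pi_2 \;\cong\; \int_{\widehat{G}_{red}} \rho^{\oplus m(\rho;\pi_1,\pi_2)}\, d\mu_G(\rho)
\end{equation*}
against Plancherel measure, for a suitable Borel multiplicity function $m(\,\cdot\,;\pi_1,\pi_2)$. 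By \cite[Proposition 8.6.8]{dixc}, an irreducible $\rho$ is weakly contained in $\pi_1 \otimes \pi_2$ exactly when $\rho$ lies in the closed support of the measure $m(\,\cdot\,;\pi_1,\pi_2)\,\mu_G$.

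Next, I would repackage the main reciprocity result of \cite{mr-rp} as the assertion that the Borel measure
\begin{equation*}
  dN(\pi_1,\pi_2,\pi_3) \;:=\; m(\pi_3^*;\pi_1,\pi_2)\, d\mu_G(\pi_1)\,d\mu_G(\pi_2)\,d\mu_G(\pi_3)
\end{equation*}
on the triple product $\widehat{G}_{red}^{\,3}$ is invariant under every permutation of its three arguments; in particular, its closed support $\Sigma \subseteq \widehat{G}_{red}^{\,3}$ is $S_3$-invariant. By construction, the fiber of $\Sigma$ over a fixed $(\pi_1,\pi_2)$ agrees with $\{\pi_3 : \pi_3^* \preceq \pi_1 \otimes \pi_2\}$ via the translation recorded in the previous paragraph. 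Applying $S_3$-symmetry to $\Sigma$ and reading the fiberwise description in each coordinate then yields $\pi_i^* \preceq \pi_j \otimes \pi_k$ for every permutation $(i,j,k)$ of $(1,2,3)$.

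The main obstacle I anticipate is the passage from measure-level symmetry to a pointwise statement about a prescribed triple: one must verify that the closed support of the compound measure $dN$ is compatible with the fiberwise supports, so that $S_3$-invariance of $dN$ genuinely descends to the pointwise statement at the given $\pi_i$. This is essentially a Fubini-type argument exploiting that the fiberwise densities $m(\,\cdot\,;\pi_1,\pi_2)$ are defined up to $\mu_G$-nullsets in a measurable way and that reduced-dual weak containment is detected by closed supports. Once that bookkeeping is in place, the proof reduces to the standard dictionary between weak containment and the supports of direct-integral decompositions, applied symmetrically across the three coordinates of a reciprocity already present in \cite{mr-rp}.
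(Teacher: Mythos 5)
Your overall strategy coincides with the paper's (translate weak containment into a support condition for a triple measure and invoke the $S_3$-invariance from \cite{mr-rp}), but two steps as written do not hold up. First, the decomposition
\begin{equation*}
  \pi_1\otimes\pi_2\cong\int_{\widehat{G}_{red}}\rho^{\oplus m(\rho;\pi_1,\pi_2)}\,\mathrm{d}\mu_G(\rho)
\end{equation*}
against the Plancherel measure is false in general. Fell absorption \Cref{eq:abs} and $\pi_i\in\widehat{G}_{red}$ only give \emph{weak} containment of $\pi_1\otimes\pi_2$ in $\lambda_G$, i.e.\ an inclusion of supports, not quasi-containment, i.e.\ absolute continuity with respect to $\mu_G$. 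Already for $G=\bR$ the tensor product of two characters is a single character, a point mass that is singular with respect to Lebesgue (= Plancherel) measure. The correct statement, and the one \cite{mr-rp} actually provides, is a decomposition over a measure $\mu_{\pi_1,\pi_2}$ depending on the pair $(\pi_1,\pi_2)$, with no absolute continuity relative to $\mu_G$; consequently your measure $dN$ with density $m(\pi_3^*;\pi_1,\pi_2)$ against $\mu_G^{\times 3}$ is not the Moore--Repka measure $\lambda=\int\mu_{\pi_1,\pi_2}\,\mathrm{d}\mu_G(\pi_1)\,\mathrm{d}\mu_G(\pi_2)$, and the cited symmetry applies to the latter, not the former.

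Second, the step you flag as "the main obstacle" is indeed where the real work lies, but a Fubini-type argument about densities defined up to nullsets cannot deliver a statement at a \emph{prescribed} triple: almost-everywhere information says nothing about the specific $(\pi_1,\pi_2,\pi_3)$ in hand. What is actually needed, and what the paper supplies, is a Fell-continuity argument: if $\mu_{\pi_1,\pi_2}(U)>0$ for every open $U\ni\pi_3$, then $\mu_{\pi_1',\pi_2'}(U)>0$ for all $(\pi_1',\pi_2')$ in a neighborhood of $(\pi_1,\pi_2)$ (otherwise one finds Fell-convergent nets $\pi_{i,\alpha}\to\pi_i$ with all $\pi_{1,\alpha}\otimes\pi_{2,\alpha}$ factoring through the quotient of $C^*_{red}(G)$ corresponding to the closed set $\widehat{G}_{red}\setminus U$, contradicting the Fell continuity of $\otimes$ \cite[Proposition F.3.2]{bdv}). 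Combined with $\pi_1,\pi_2\in\mathrm{supp}(\mu_G)$, this open persistence is what places $(\pi_1,\pi_2,\pi_3)$ in $\mathrm{supp}(\lambda)$ and lets the permutation invariance of $\lambda$ descend to the pointwise claim. Until you replace the Fubini heuristic with this (or an equivalent) continuity argument, and correct the decomposition to use the fiber measures $\mu_{\pi_1,\pi_2}$, the proof has a genuine gap at its central step.
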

\begin{proof}
  Consider the direct-integral decomposition
  \begin{equation}\label{eq:pisdec}
    \pi_1\otimes \pi_2\cong \int_{\widehat{G}} (\pi_3^*)^{\oplus n(\pi_1,\pi_2,\pi_3)}\ \mathrm{d}\mu_{\pi_1,\pi_2}(\pi_3)
  \end{equation}
  of \cite[p.362]{mr-rp}, with the slight caveat that our $\mu_{\pi_1,\pi_2}$ is that source's $\overline{\mu}_{\pi_1,\pi_2}$. Here $n(\cdot,\cdot,\cdot)$ is the relevant multiplicity function ({\it positive}-valued; see \Cref{re:pos}). According to \cite[Proposition 8.6.8]{dixc} we have $\pi_3^*\preceq \pi_1\otimes \pi_2$ precisely when $\pi_3$ is contained in the support of $\mu_{\pi_1,\pi_2}$:
  \begin{equation}\label{eq:ularge}
    \mu_{\pi_1,\pi_2}(U)>0\text{ for all open neighborhoods }\pi_3\in U\subseteq \widehat{G}. 
  \end{equation}
  Next note that this implies
  \begin{equation*}
    \mu_{\pi'_1,\pi'_2}(U)>0\text{ for }(\pi'_1,\pi'_2)\text{ in a neighborhood of }(\pi_1,\pi_2). 
  \end{equation*}
  Indeed, otherwise we could find Fell-convergent nets
  \begin{equation*}
    \pi_{i,\alpha}\to \pi_i,\ i=1,2
  \end{equation*}
  such that all $\mu_{\pi_{1,\alpha},\pi_{2,\alpha}}$ are carried by the closed set $F:=\widehat{G}_{red}\setminus U$ and hence all $\pi_{1,\alpha}\otimes \pi_{2,\alpha}$ are representations of the quotient of $C^*(G)_{red}$ corresponding to that closed subset. By the Fell-continuity of the tensor product \cite[Proposition F.3.2]{bdv}, this contradicts \Cref{eq:ularge}.

  What all of this allows us to conclude is that $\pi_3^*\preceq \pi_1\otimes \pi_2$ precisely when
  \begin{equation}\label{eq:3meas}
    (\pi_1,\pi_2,\pi_3)\in\mathrm{supp}\left(\mu_{\pi_1,\pi_2}\ \mathrm{d}\mu_G(\pi_1)\ \mathrm{d}\mu_G(\pi_2)\right),
  \end{equation}
  where $\mu_G$ is, as before, the Plancherel measure of $G$. Now, the (class of the) measure on the right-hand side of \Cref{eq:3meas} is proven in \cite[\S 2, Theorem]{mr-rp} (where that measure is denoted by $\lambda$) to be invariant under permuting the $\pi_i$, hence the conclusion.
\end{proof}

\begin{remark}\label{re:pos}
  Although this does not seem to be spelled out in \cite{mr-rp}, the multiplicity function $n(\cdot,\cdot,\cdot)$ appearing there and in the proof of \Cref{pr:rec} is positive rather than non-negative. That is, the failure of a representation $\pi_3$ to appear in the decomposition \Cref{eq:pisdec} is encoded in the vanishing of $\mu_{\pi_1,\pi_2}$ rather than that of $n(\pi_1,\pi_2,\pi_3)$.

  This is clear from the claim made in \cite[\S 2, following the first equation display]{mr-rp} that $\mu_{\pi_1,\pi_2}$ is uniquely defined up to equivalence: if we were to allow vanishing $n(\cdot,\cdot,\cdot)$ then $\mu_{\pi_1,\pi_2}$ would be arbitrary over those $\pi_3$ where $n(\pi_1,\pi_2,\pi_3)$ does vanish.
\end{remark}

\begin{remark}
  Because the measure on the right-hand side of \Cref{eq:3meas} (i.e. the $\lambda$ of \cite[\S 2]{mr-rp}) is assembled by integrating $\mu_{\pi_1,\pi_2}$ against the Cartesian square of the Plancherel measure $\mu_G$ supported on $\widehat{G}_{red}$, \Cref{pr:rec} is only valid for members of the {\it reduced} dual.

  There is thus no contradiction between \Cref{pr:rec} and the fact that even though we always have $\pi\preceq 1\otimes \pi$, we rarely have $1\preceq \pi\otimes \pi^*$ (with that containment closely linked to amenability; cf. \Cref{re:o-o}).
\end{remark}

\section{Chain-center duality for various classes of groups}\label{se:classes}

\subsection{Compact-by-abelian groups}\label{subse:cpct-by-ab}

We extend the previously-mentioned results minimally, so as to house both \Cref{ex:cpct} and \Cref{le:ab} under a common umbrella.

\begin{definition}\label{def:cpct-by-ab}
  A locally compact group $G$ is an {\it extension of a compact group by an abelian group}, or {\it compact-by-abelian} for short, if it fits into an exact sequence
  \begin{equation}\label{eq:exseq}
    1\to N\to G\to K\to 1
  \end{equation}
  with $N\le G$ closed and abelian and $K\cong G/N$ compact.
\end{definition}

\begin{theorem}\label{th:cpct-by-ab}
  Compact-by-abelian locally compact groups are cc-dual. 
\end{theorem}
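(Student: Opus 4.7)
The plan is to verify injectivity of $\mathrm{can}: C(G) \to \widehat{Z(G)}$, which by \Cref{cor:injenough} suffices for cc-duality. By \Cref{def:cpct-by-ab}, fix the extension $1 \to N \to G \to K \to 1$ with $N$ closed abelian and $K$ compact. Compactness of $K$ ensures that its adjoint action on $\widehat{N}$ has closed orbits with Hausdorff orbit space, so Mackey's normal-subgroup analysis applies: $G$ is type-I, and every $\rho \in \widehat{G}$ takes the form $\rho = \mathrm{Ind}_{G_\chi}^G \pi$ for some $\chi \in \widehat{N}$, where $G_\chi$ is its stabilizer (an extension of the compact $K_\chi := G_\chi/N \subseteq K$ by $N$) and $\pi \in \widehat{G_\chi}$ restricts to a multiple of $\chi$ on $N$. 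Compactness of $K_\chi$ then makes $\pi$ finite-dimensional, so in fact every $\rho \in \widehat{G}$ is finite-dimensional of dimension $[K:K_\chi]\dim\pi$.

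Since $Z(G)$ fixes $\chi$ by commutativity, $Z(G) \subseteq G_\chi$, and the standard induction-from-a-subgroup-containing-the-center computation yields $\rho(z) = \pi(z)$ for $z \in Z(G)$; hence $\chi_\rho = \pi|_{Z(G)}$. Decomposing $Z(G)$ via the intersection $Z(G) \cap N = N^K$ and its image in $Z(K)$, the hypothesis $\chi_\rho = 1$ forces $\chi|_{N^K} = 1$ together with triviality of the complementary contribution of $\pi$ on the part of $Z(G)$ mapping into $K$.

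To then conclude $g_\rho = 1$, the plan is to use the projection formula $\mathrm{Ind}_{G_\chi}^G \pi \otimes \tau \cong \mathrm{Ind}_{G_\chi}^G(\pi \otimes \tau|_{G_\chi})$ together with weak-containment relations in $C(G)$ to factor $g_\rho$ as a product $g_{\rho_0} \cdot g_\tau$, where $\rho_0 = \mathrm{Ind}_{G_\chi}^G \tilde\chi$ isolates the $\widehat{N}$-data and $\tau \in \widehat{G}$ factors through $G \to K$, isolating the $K_\chi$-data of $\pi$. Compact cc-duality (\Cref{ex:cpct}), transported via the functorial arrow $C(K) \to C(G)$ from \Cref{le:fnct}, then constrains $g_\tau$, while abelian cc-duality (\Cref{le:ab}) applied to the $N$-restriction data of $\rho_0$ constrains $g_{\rho_0}$; jointly these should force $g_\rho = 1$ whenever $\chi_\rho = 1$.

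Two obstacles warrant particular attention. First, the Mackey cocycle: when non-trivial, $\pi$ is only a projective $K_\chi$-representation and does not split as $\tilde\chi \otimes \sigma$ for a linear $K_\chi$-irrep $\sigma$; this has to be handled by passing to a central extension $\widetilde{K_\chi} \twoheadrightarrow K_\chi$ over which the cocycle trivializes and transporting cc-duality via \Cref{le:fnct}. Second, $Z(K)$ may strictly contain the image of $Z(G)$ in $Z(K)$, so $g_\tau$ need not be trivial in $C(K)$ even when $\chi_\rho = 1$ on $Z(G)$; however, $g_\tau$ in $C(G)$ can nonetheless be trivialized by exploiting weak-containment relations with irreducibles of $G$ that are not pullbacks from $K$, in the spirit of the relation $\rho_\chi \otimes \mathrm{sign} \cong \rho_\chi$ that occurs for $G = \bR \rtimes \bZ/2$ and kills the sign representation in $C(G)$.
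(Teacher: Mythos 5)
Your high-level plan identifies the right ingredients (the Mackey machine, the projection formula, absorption relations, and a reduction to the compact and abelian cases), and you have honestly flagged the two genuine obstacles; but the proposal does not overcome either of them, and it is precisely there that the paper's proof does its work. First, a factual error: the orbit $K\triangleright\chi\cong K/K_\chi$ need not be finite ($K$ is compact, not finite --- for $G=\bR^2\rtimes SO(2)$ the generic orbit is a circle and the corresponding induced representations are infinite-dimensional), so your claim that every $\rho\in\widehat{G}$ is finite-dimensional of dimension $[K:K_\chi]\dim\pi$ is false. More seriously, the proposed factorization $g_\rho=g_{\rho_0}\cdot g_\tau$ requires splitting $\pi$ as $\widetilde{\chi}\otimes\sigma$, which the Mackey cocycle obstructs; your fix of passing to a central extension $\widetilde{K_\chi}\twoheadrightarrow K_\chi$ does not obviously help, because the cocycle representations of $K_\chi$ that arise are not restrictions of $G$-representations, so the weak-containment relations defining $C(G)$ give you no purchase on them, and \Cref{le:fnct} only transports chain-group data along honest group morphisms. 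The paper sidesteps this entirely: its key lemma (\Cref{le:zgn}) never decomposes $\pi$ at all, but instead observes that for $\rho\in\widehat{K}\subseteq\widehat{G}$ whose restriction to $K_\chi$ contains invariant vectors, the projection formula gives $\rho\otimes\mathrm{Ind}_{G_\chi}^G\pi\cong\mathrm{Ind}_{G_\chi}^G(\rho|_{G_\chi}\otimes\pi)\ge\mathrm{Ind}_{G_\chi}^G\pi$ as a summand, an absorption that kills $g_\rho$ regardless of the structure of $\pi$.

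Second, your own ``second obstacle'' --- that the image of $Z(G)$ in $Z(K)$ may be proper, so compact cc-duality alone does not kill $g_\tau$ --- is the heart of the matter, and ``can nonetheless be trivialized by exploiting weak-containment relations \dots in the spirit of'' is not an argument. The paper resolves it by first normalizing $N$ to be \emph{maximal} among abelian normal subgroups (Zorn's lemma), which yields both $Z(G)\subseteq N$ and the identity $Z(K)\cap Z_K(N)=\{1\}$; it then combines \Cref{le:zgn} (irreducibles of $K$ trivial on $Z_K(N)$ die in $C(G)$) with M\"uger's theorem for $K$ (irreducibles trivial on $Z(K)$ die in $C(K)$, hence in $C(G)$ by functoriality) and the Galois correspondence between closed normal subgroups of $K$ and rigid tensor subcategories of $\mathrm{Rep}(K)$ to conclude that \emph{all} of $\widehat{K}$ dies in $C(G)$. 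Finally, the identification of $C(G)$ with $\widehat{Z(G)}$ is completed by noting that tensor products add the $\widehat{N}$-orbits on which restrictions live, so the class of $\rho$ in $C(G)$ depends only on the image of its orbit in $\widehat{N}/\{\chi-g\triangleright\chi\}$, whose Pontryagin dual is $N^G=Z(G)$ (again using maximality). Without the maximality normalization and the resulting group-theoretic identity, your two ``coordinates'' do not jointly force $g_\rho=1$, so as written the proof has a genuine gap.
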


This will require some background, including analyzing the representations of $G$ in terms of those of its closed normal (in this case abelian) subgroup $N\trianglelefteq G$. The general theory dealing with this is sometimes referred to as the ``Mackey machine'' for G.W. Mackey (e.g. \cite[\S 3.8]{mack-bk}); we follow the treatment in \cite[Chapter 4, especially \S 4.3]{kt} (where the phrase is ``Mackey analysis'' instead).

Various technical conditions have to be met in order for the theory to go through in full force, but they are in our case by \cite[Remark 4.26 (3)]{kt} because the quotient $K:=G/N$ is assumed compact. For this reason, the various results from \cite[Section 4.3]{kt} cited below hold in the present setup. The main such result is \cite[Theorem 4.27]{kt}, which we can summarize as follows:

\begin{itemize}
\item Every irreducible representation of $G$, when restricted to $N$, lives on (\cite[Definition 4.59]{kt}) an orbit $O$ of the conjugation action `$\triangleright$' of $G$ on $\widehat{N}$, defined as usual by
  \begin{equation}\label{eq:conjact}
    g\triangleright\chi:=\chi(g^{-1}\cdot g)\quad\text{for}\quad g\in G,\ \chi\in \widehat{N}.
  \end{equation}
\item For a character $\chi\in \widehat{N}$ with isotropy group
  \begin{equation*}
    G_{\chi} := \{g\in G\ |\ g\triangleright \chi=\chi\}
  \end{equation*}
  with respect to the conjugation action the induction
  \begin{equation*}
    \mathrm{Ind}_{G_{\chi}}^G:\mathrm{Rep}(G_{\chi})\to \mathrm{Rep}(G)
  \end{equation*}
  gives a bijection between the irreducible representations of $G_{\chi}$ whose restrictions to $N$ live on $\{\chi\}$ and those of $G$ which live on the orbit $O:=G\triangleright \chi$. 
\end{itemize}

In first instance, we can essentially reduce \Cref{th:cpct-by-ab} to the case when the abelian normal subgroup $N\trianglelefteq G$ is in fact {\it central}.

\begin{lemma}\label{le:zgn}
  Let $G$ be a compact-by-abelian group fitting into an exact sequence \Cref{eq:exseq} and $Z:=Z_G(N)$ the centralizer of $N$ in $G$.

  The irreducible representations of $G$ that restrict trivially to $Z$ are trivial in the chain group $C(G)$.
\end{lemma}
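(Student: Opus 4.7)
Since $N$ is abelian and $N \subseteq Z$, the hypothesis $\rho|_Z = 1$ forces $\rho|_N = 1$; in particular $\rho$ factors through $K = G/N$ and, in the language of the Mackey analysis reviewed above, lives on the trivial $G$-orbit in $\widehat{N}$. The plan is to exploit characters $\chi \in \widehat{N}$ \emph{away} from the identity. For any such $\chi$ the stabilizer $G_\chi$ automatically contains $Z$ (the centralizer acts trivially on $\widehat{N}$), and for any Mackey irreducible $\pi_{\chi,\sigma} := \mathrm{Ind}_{G_\chi}^G(\sigma)$ (with $\sigma \in \widehat{G_\chi}$ restricting to $\chi$ on $N$) the projection formula yields
\begin{equation*}
  \rho \otimes \pi_{\chi,\sigma} \;\cong\; \mathrm{Ind}_{G_\chi}^G\bigl(\rho|_{G_\chi} \otimes \sigma\bigr),
\end{equation*}
with $\rho|_{G_\chi}$ factoring through the compact quotient $G_\chi/Z$ because $\rho|_Z$ is trivial.

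The cleanest scenario is to pick $\chi$ \emph{regular}, meaning $G_\chi = Z$ --- equivalently, the compact quotient $Q := G/Z$ acts freely on $\chi$. For such $\chi$ the restriction $\rho|_{G_\chi} = \rho|_Z$ is the trivial representation, so the above display collapses to $\rho \otimes \pi_{\chi,\sigma} \cong \pi_{\chi,\sigma}$. Since $\pi_{\chi,\sigma}$ is irreducible by the Mackey machine, this is a weak-containment relation $\pi_{\chi,\sigma} \preceq \rho \otimes \pi_{\chi,\sigma}$ giving, via the defining relations of $C(G)$, the identity $g_\rho \cdot g_{\pi_{\chi,\sigma}} = g_{\pi_{\chi,\sigma}}$, whence $g_\rho = 1$.

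Existence of a regular $\chi$ rests on the fact that $Q$ acts faithfully on $\widehat{N}$ (the kernel of this action being $Z_G(N)/Z$, which is trivial), so that the non-regular locus
\begin{equation*}
  \bigcup_{q \in Q \setminus \{1\}} \mathrm{F
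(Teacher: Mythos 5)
Your reduction to $K=G/N$ and the induction--restriction computation $\rho\otimes\mathrm{Ind}_{G_\chi}^G\sigma\cong\mathrm{Ind}_{G_\chi}^G(\rho|_{G_\chi}\otimes\sigma)$ are exactly the engine of the paper's proof, but your argument breaks at the step where you need a \emph{regular} character $\chi\in\widehat{N}$ with $G_\chi=Z$. Faithfulness of the $Q=G/Z$-action on $\widehat{N}$ only says that $\bigcap_{\chi}G_\chi=Z$; it does not produce a single $\chi$ with trivial $Q$-stabilizer, because the fixed-point sets of the nontrivial elements of $Q$ can cover all of $\widehat{N}$. Concretely, take $G=S_4$ with $N$ the Klein four-group, $K=S_3$, so $Z=Z_G(N)=N$ and $Q\cong S_3$ acts faithfully on $\widehat{N}\cong(\mathbb{Z}/2)^2$; every nontrivial $\chi$ has stabilizer of order $2$ in $Q$, and the three order-$2$ subgroups of $\widehat{N}$ cover it, so the non-regular locus is everything. (When $Q$ is infinite, $\bigcup_{q\neq 1}\mathrm{Fix}(q)$ is in general an uncountable union of closed sets, so no Baire-category argument rescues the claim either.) The proposal is moreover cut off mid-sentence at precisely this existence claim, which is never established.

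The repair --- and the route the paper takes --- is to weaken what you require of $\rho|_{G_\chi}$: it need not be trivial, only contain the trivial representation of $G_\chi$ as a summand, since then $\rho|_{G_\chi}\otimes\sigma\ge\sigma$ and your display already yields $g_\rho\,g_{\pi_{\chi,\sigma}}=g_{\pi_{\chi,\sigma}}$, i.e.\ $g_\rho=1$. This kills every $\rho\in\widehat{K}$ possessing a nonzero $G_\chi$-invariant vector for \emph{some} $\chi$. Such $\rho$ do not exhaust the representations trivial on $Z$ (in the $S_4$ example the sign representation of $K=S_3$ has no $K_\chi$-invariant vectors for any $\chi$), but triviality in $C(G)$ is stable under passing to summands of tensor products, so it suffices that these generators monoidally generate $\mathrm{Rep}(K/H)$ with $H=\bigcap_{g,\chi}gK_\chi g^{-1}$ equal to the image of $Z$ in $K$; this is the Galois correspondence \Cref{eq:gal} for compact groups from \cite{mug} (the sign representation above is reached as a summand of $V\otimes V$ for the two-dimensional $V$). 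That closure-under-tensoring step is the idea missing from your argument, and it is what replaces the generally nonexistent regular character.
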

\begin{proof}
  We will work with representations that are trivial on $N$ (as all of those trivial on $Z$ must be, since $N\le Z$), identified in the obvious manner with representations of the quotient $K:=G/N$.
 
  Consider a character $\chi\in \widehat{N}$, its isotropy group $K_{\chi}\le K$, and a representation $\rho\in\widehat{K}$ which contains $K_{\chi}$-invariant vectors; or in other words, one for which the restriction
  \begin{equation*}
    \rho|_{K_{\chi}}\in \mathrm{Rep}(K_{\chi})
  \end{equation*}
  contains the trivial representation (of $K_{\chi}$). Let also
  \begin{equation*}
    \mathrm{Ind}_{G_{\chi}}^{G}\pi\in \widehat{G}
  \end{equation*}
  be a representation whose restriction to $N$ lives on $G\triangleright \chi$ (so that it is indeed induced from a $G_{\chi}$-representation whose restriction to $N$ lives on $\{\chi\}$, by the discussion preceding the present result). Then, by the induction-restriction formula
  \begin{equation*}
    \bullet\otimes \mathrm{Ind}_{A}^{B}(\square) \cong \mathrm{Ind}_A^B(\bullet|_A\otimes \square)
  \end{equation*}
  (\cite[Theorem 2.58]{kt}), we have
  \begin{equation}\label{eq:indres}
    \rho\otimes \mathrm{Ind}_{G_{\chi}}^G\pi\cong \mathrm{Ind}_{G_{\chi}}^G(\rho|_{G_{\chi}}\otimes \pi).
  \end{equation}
  Since $\rho|_{G_{\chi}}$ contains the trivial representation by assumption, \Cref{eq:indres} will contain $\mathrm{Ind}_{G_{\chi}}^G(\pi)$ as a summand. In the chain group, then, we have
  \begin{equation*}
    \rho\cdot \mathrm{Ind}_{G_{\chi}}^G\pi = \mathrm{Ind}_{G_{\chi}}^G\pi
  \end{equation*}
  and $\rho$ is trivial.

  The $\rho\in\widehat{K}\subseteq \widehat{G}$ as above, containing $G_{\chi}$-invariant vectors for some $\chi$, generate a full, summand-closed rigid monoidal subcategory $\mathrm{Rep}_{0,N}(K)$ of $\mathrm{Rep}(K)$ all of whose members are trivial in the chain group $C(G)$. It remains to
  \begin{itemize}
  \item recall (\cite[(2.3)]{mug}) the Galois correspondence between normal closed subgroups of the compact group $K$ and full, summand-closed rigid monoidal subcategories of $\mathrm{Rep}(K)$ given by
    \begin{equation}\label{eq:gal}
      (H\trianglelefteq K)\mapsto (\text{representations trivial on $H$})\simeq \mathrm{Rep}(K/H)\subseteq \mathrm{Rep}(K);
    \end{equation}
  \item observe that via this correspondence the monoidal category $\mathrm{Rep}_{0,N}(K)$ can be identified with $\mathrm{Rep}(K/H)$, where $H$ is the intersection of all conjugates
    \begin{equation*}
      g K_{\chi} g^{-1},\ g\in G,\ \chi\in \widehat{N}. 
    \end{equation*}
  \end{itemize}
  That intersection is nothing but the image through $G\to K=G/N$ of the centralizer $Z=Z_G(N)$, hence the conclusion.
\end{proof}

\begin{lemma}\label{le:maxab}
  Let $G$ be a compact-by-abelian group fitting into an exact sequence \Cref{eq:exseq}, with $N$ {\it maximal} among normal abelian subgroups.

  Any $\rho\in\widehat{K}\subseteq \widehat{G}$ is trivial in the chain group $C(G)$.
\end{lemma}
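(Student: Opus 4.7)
I would proceed by a Tannakian–Galois argument on the compact quotient $K$, combined with the maximality hypothesis on $N$. Let $\mathcal{C} \subseteq \mathrm{Rep}(K) \subseteq \mathrm{Rep}(G)$ denote the full rigid monoidal subcategory generated (under direct sums, tensor products, subquotients and duals) by the $N$-invariant subrepresentations $(\sigma \otimes \sigma^{*})^{N}$ as $\sigma$ ranges over $\widehat{G}$. Because $N$ is normal each such subspace is $G$-invariant and, being annihilated by $N$, descends to a genuine $K$-representation. Any irreducible $\rho$ appearing in it is a fortiori weakly contained in $\sigma \otimes \sigma^{*}$, so the defining relations \Cref{item:2}--\Cref{item:3} of \Cref{def:chain} force
\begin{equation*}
g_{\rho} \;=\; g_{\sigma}\,g_{\sigma^{*}} \;=\; g_{\sigma}\,g_{\sigma}^{-1} \;=\; 1
\end{equation*}
in $C(G)$. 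Hence every irreducible of $\mathcal{C}$ is already trivial in the chain group, and the lemma reduces to proving $\mathcal{C} = \mathrm{Rep}(K)$.

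By the Galois correspondence for the compact group $K$ (the same one invoked in \Cref{le:zgn}, cf.\ \cite[(2.3)]{mug}), $\mathcal{C} = \mathrm{Rep}(K/H)$ for some closed normal $H \trianglelefteq K$, and the task becomes proving $H = 1$. I would set $\widetilde{H} := \pi^{-1}(H) \trianglelefteq G$: it is normal in $G$ and contains $N$, so by maximality of $N$ among normal abelian subgroups of $G$ it suffices to show $\widetilde{H}$ is abelian.

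The core computation does exactly this. Identifying $\sigma \otimes \sigma^{*} \cong \mathrm{End}(V_{\sigma})$ with its conjugation $G$-action, the $N$-invariants become the commutant $\sigma(N)'$ of $\sigma(N)$ inside $\mathrm{End}(V_{\sigma})$, and the triviality of the $\widetilde{H}$-action on that commutant translates into $\sigma(\widetilde{h}) \in \sigma(N)''$ for every $\widetilde{h} \in \widetilde{H}$. Because $\sigma(N)$ is an abelian family of unitaries, its bicommutant is the algebra of operators that are block-scalar with respect to the $\sigma|_{N}$-isotypic decomposition of $V_{\sigma}$; these block-scalars commute pairwise, whence $\sigma(\widetilde{h}_{1})$ and $\sigma(\widetilde{h}_{2})$ commute for any two $\widetilde{h}_{1},\widetilde{h}_{2} \in \widetilde{H}$ and every $\sigma \in \widehat{G}$. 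The Gelfand--Raikov theorem (irreducible unitary representations separate the points of a locally compact group) then forces $[\widetilde{h}_{1}, \widetilde{h}_{2}] = 1_{G}$, so $\widetilde{H}$ is abelian, equality $\widetilde{H} = N$ follows from maximality, and $H = 1$ as required.

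The main obstacle I anticipate is the bicommutant identification $\sigma(N)'' = \{\text{block-scalars}\}$: when $V_{\sigma}$ is infinite-dimensional and the $\sigma|_{N}$-spectrum is continuous, the "isotypic decomposition" must be read as a direct integral rather than a plain orthogonal direct sum, and one must verify that the resulting block-scalar operators still form a commutative von Neumann algebra. This is standard for unitary representations of abelian groups via the spectral theorem, but deserves a careful write-up; once it is in hand, the pairwise commutation argument and the maximality step are purely formal.
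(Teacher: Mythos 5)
There is a genuine gap, and it sits exactly where you flag your ``main obstacle'' --- but the problem is more serious than a routine direct-integral verification. Your category $\mathcal{C}$ is built from the honest $N$-fixed subspaces $(\sigma\otimes\sigma^*)^N$, i.e.\ from the Hilbert--Schmidt operators in the commutant $\sigma(N)'$. When $\sigma$ is infinite-dimensional and $\sigma|_N$ has continuous spectrum --- the typical situation whenever the $G$-orbits in $\widehat{N}$ are positive-dimensional --- this space is simply \emph{zero}: a multiplicity-free direct integral over a non-atomic measure admits no nonzero Hilbert--Schmidt self-intertwiners. Concretely, take the Euclidean motion group $G=SO(2)\ltimes\bR^2$, which satisfies all hypotheses with $N=\bR^2$ maximal abelian normal and $Z(G)=\{1\}$. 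Every infinite-dimensional $\sigma\in\widehat{G}$ restricts to $N$ as a multiplicity-free direct integral over a circle with Lebesgue measure, so $(\sigma\otimes\sigma^*)^N=0$; the one-dimensional $\sigma$ contribute only the trivial representation. Hence your $\mathcal{C}$ consists of trivial representations only, $H=K=SO(2)$, and $\widetilde{H}=G$ is not abelian --- the argument terminates without establishing $H=1$, and indeed $H\neq 1$ for your $\mathcal{C}$. The core computation imposes no constraint on $\sigma(\widetilde{h})$ for such $\sigma$ because there is nothing for $\widetilde{H}$ to act trivially on; relatedly, even when $(\sigma\otimes\sigma^*)^N\neq 0$, the step ``commutes with $\sigma(N)'\cap HS$ implies lies in $\sigma(N)''$'' needs the Hilbert--Schmidt part to be weakly dense in $\sigma(N)'$, which again fails precisely in the continuous-spectrum regime.

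The repair must go through \emph{weak} containment rather than invariant vectors: in the $E(2)$ example the trivial character of $N$ is weakly contained in $(\sigma\otimes\sigma^*)|_N$ (the sumset of a circle and its negative contains the origin), and in fact all of $\widehat{K}$ is weakly contained in $\sigma\otimes\sigma^*$, even though no invariant vectors exist. This is what the paper's route captures: \Cref{le:zgn} uses the Mackey description of $\widehat{G}$ and the induction--restriction formula to show that $K$-irreducibles trivial on the image of the centralizer $Z_G(N)$ die in $C(G)$; M\"uger's theorem applied to the compact quotient $K$ kills those trivial on $Z(K)$; and the maximality of $N$ forces $Z(K)\cap Z_K(N)=\{1\}$, so the Galois correspondence combines the two families into all of $\mathrm{Rep}(K)$. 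Your bicommutant idea does work verbatim when every $\sigma|_N$ is atomic (e.g.\ $G$ compact, or $N$ finite), but it does not cover the general compact-by-abelian case.
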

\begin{proof}
  The maximality assumption has a number of consequences:
  \begin{itemize}
  \item First, $N$ contains the center $Z(G)$.
  \item Additionally, in the quotient $K:=G/N$ we have
    \begin{equation}\label{eq:zkk}
      Z(K)\cap Z_K(N) = \{1\};
    \end{equation}
    or: the center of $K$ and the centralizer of $K$-action on $N$ induced by conjugation in $G$ intersect trivially.
  \end{itemize}
  The first observation is obvious, as we can always adjoin the center of $G$ to $N$ thus enlarging the latter. As for the second remark, note that for any $k$ in the intersection \Cref{eq:zkk} the subgroup of $G$ generated by $N$ and the preimage of $k$ through $G\to K$ is abelian normal, so we again conclude by maximality.
  
  Now, on the one hand, we know from \Cref{le:zgn} that irreducible $K$-representations (also regarded as $G$-representations) trivial on $Z_K(N)$ are trivial in $C(G)$. On the other hand, \Cref{th:cpct-by-ab} for compact groups (which is nothing but \cite[Theorem 3.1]{mug}), applied to $K$, ensures that irreducible $K$-representations trivial on $Z(K)$ are trivial in $C(G)$.

  We can now conjoin these two observations: by and the Galois correspondence \Cref{eq:gal} the rigid monoidal subcategory of $\mathrm{Rep}(K)$ generated by all irreducibles that are trivial on {\it either} $Z(K)$ or $Z_K(N)$ is precisely
  \begin{equation*}
    \mathrm{Rep}(K/Z(K)\cap Z_K(N)) = \mathrm{Rep}(K),
  \end{equation*}
  where the equality appeals to \Cref{eq:zkk}. But this is precisely the sought-after conclusion: $\rho\in \widehat{K}\subset\widehat{G}$ are all trivial in $C(G)$.
\end{proof}

\pf{th:cpct-by-ab}
\begin{th:cpct-by-ab}
  We can assume without loss of generality (via an application of Zorn's lemma) that the abelian normal subgroup $N\trianglelefteq G$ is maximal among abelian normal subgroups, so that \Cref{le:maxab} applies and $\widehat{K}\subseteq\widehat{G}$ is annihilated upon passing to the chain group $C(G)$.

  As noted in the discussion preceding \Cref{le:zgn}, each irreducible representation $\rho\in \widehat{G}$, when restricted to $N$, lives on an orbit $O=G\triangleright \chi$ of the conjugation action \Cref{eq:conjact} on $\widehat{N}$. Now, if $\rho_i\in \widehat{G}$, $i=1,2$ live, respectively, on the orbits $O_i\subset\widehat{N}$, the irreducible constituents of the tensor product $\rho_1\otimes \rho_2$ live on the orbits contained in
  \begin{equation*}
    O_1+O_2\subset \widehat{N}. 
  \end{equation*}
  It follows that the quotient map $\widehat{G}\to C(G)$ factors through the quotient group
  \begin{equation*}
    \widehat{N}/\{\chi-g\triangleright\chi\ |\ g\in G,\ \chi\in \widehat{N}\}.
  \end{equation*}
  Pontryagin duality identifies that group with the dual of
  \begin{equation*}
    N^{G}:=\{n\in N\ |\ gng^{-1}=n,\ \forall g\in G\} = Z(G),
  \end{equation*}
  and we thus have the desired identification of $C(G)$ with $\widehat{Z(G)}$.
\end{th:cpct-by-ab}

\subsection{Nilpotent groups}\label{subse:nil}

\begin{theorem}\label{th:simp-con-nil}
  Connected, nilpotent locally compact groups are chain-center dual in the sense of \Cref{def:ccdual}.
\end{theorem}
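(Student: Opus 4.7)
The plan is to reduce via \Cref{cor:lieenough} to the case of a connected nilpotent Lie group $G$, writing $G=\widetilde{G}/\Gamma$ for its simply connected cover $\widetilde{G}$ and a discrete central subgroup $\Gamma\subseteq Z(\widetilde{G})\cong\fz$ (exponentiating the Lie-algebra center). Kirillov's orbit method then parametrizes $\widehat{G}$ by the $G$-coadjoint orbits $[f]\subseteq \fg^*$ through functionals in $\fg^*_\Gamma:=\{f\in\fg^*\mid f(\Gamma)\subseteq\bZ\}$, with the central character of $\pi_f$ recorded by the restriction $f|_\fz$.

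The engine for producing relations in $C(G)$ is the Corwin--Greenleaf tensor-product formula for nilpotent Lie groups: $\pi_{f_1}\otimes \pi_{f_2}$ decomposes with support on the orbits meeting the Minkowski sum $O_{f_1}+O_{f_2}$, so in particular $\pi_{f_1+f_2}\preceq\pi_{f_1}\otimes\pi_{f_2}$. Combined with Kirillov's homeomorphism $\fg^*_\Gamma/G\cong \widehat{G}$, this produces a surjective group homomorphism
\[
  \phi\colon \fg^*_\Gamma\to C(G),\qquad f\mapsto g_{[\pi_f]},
\]
whose composition with $\cat{can}\colon C(G)\to \widehat{Z(G)}$ is the restriction map $f\mapsto f|_\fz$. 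By \Cref{cor:injenough}, cc-duality for $G$ reduces to the equality $\ker\phi=\fz^\perp$; the inclusion $\ker\phi\subseteq\fz^\perp$ is automatic (a trivial central character forces $f|_\fz=0$), so the real task is to prove $\fz^\perp\subseteq\ker\phi$.

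For the latter I would exploit the polynomial curves
\[
  P_{\xi,f}(t):=\operatorname{Ad}^*\exp(t\xi)(f)-f=\sum_{k\ge 1}\frac{t^k}{k!}(\operatorname{ad}^*\xi)^k(f),\qquad \xi\in\fg,\ f\in\fg^*_\Gamma,\ t\in\bR,
\]
every one of which lies in $\fz^\perp\cap\ker\phi$ because $\operatorname{Ad}^*\exp(t\xi)(f)$ and $f$ inhabit the same coadjoint orbit. An elementary lemma---proved by inducting on the polynomial's degree via combinations of the form $P(2t)-2^d P(t)$ that shave off the top coefficient, while using that $t$ sweeps out all of $\bR$---identifies the subgroup of $\fg^*$ generated by $\{P_{\xi,f}(t):t\in\bR\}$ with the real linear span of the coefficients $(\operatorname{ad}^*\xi)^k(f)$. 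Letting $(\xi,f)$ vary and using that the $\bR$-span of $\fg^*_\Gamma$ is all of $\fg^*$, this span amounts to $\sum_\xi\operatorname{image}(\operatorname{ad}^*\xi)=\bigl(\bigcap_\xi\ker\operatorname{ad}\xi\bigr)^\perp=\fz^\perp$, yielding $\fz^\perp\subseteq\ker\phi$ and hence $C(G)\cong\fg^*_\Gamma/\fz^\perp\cong\widehat{Z(G)}$. The main obstacle is the coefficient-extraction lemma just invoked: one must realize each coefficient $(\operatorname{ad}^*\xi)^k(f)$ as an honest $\bZ$-linear combination of values $P_{\xi,f}(t_i)$, without recourse to closure or Hausdorff-ness of $C(G)$ (which are not available a priori). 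The saving grace is the real-parameter freedom in $t$, which permits first isolating $t^k a_k$ by integer combinations of rescalings and then absorbing the remaining scalar by choosing $t$ suitably.
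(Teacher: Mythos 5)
Your proposal is correct and follows essentially the same route as the paper: reduce to Lie quotients via \Cref{cor:lieenough}, apply Kirillov's orbit method to $\widetilde{G}/\Gamma$, and identify $C(G)$ with the quotient of $\fg^*_{\Gamma\to\bZ}$ by the coadjoint-orbit differences, which vector-space duality matches with $\widehat{Z(G)}$. The one place you go beyond the paper is the finite-difference lemma showing that the \emph{subgroup} generated by the differences $\mathrm{Ad}^*_{\exp(t\xi)}f-f$ already equals their linear span $\fz^{\perp}$ --- the paper simply quotients by the span, so your coefficient-extraction argument supplies a detail it leaves implicit.
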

\begin{proof}
  According to \Cref{cor:lieenough} it will be enough to handle the Lie-group quotients of such a group by compact normal subgroups. For that reason, we specialize the rest of the proof to {\it Lie} (rather than arbitrary, locally compact) nilpotent connected groups.
  
  Consider first {\it simply-connected} (nilpotent, connected) Lie groups. That is precisely the class of groups to which the orbit method of \cite{kir} applies most straightforwardly (see the discussion on \cite[p.xviii]{kir}). Per the summary in \cite[p.xix, User's guide]{kir},
  \begin{itemize}
  \item The unitary dual $\widehat{G}$ is homeomorphic to the space $\cO(G)$ of orbits in the dual Lie algebra $\fg^*$ under the coadjoint action (i.e. the space of {\it coadjoint orbits}).
  \item Having identified $X\in \widehat{G}$ with an orbit $\Omega_X$ and similarly for $Y$ and $Z$, the weak containment
    \begin{equation*}
      X\preceq Y\otimes Z
    \end{equation*}
    simply says that
    \begin{equation*}
      \Omega_X\subseteq \Omega_Y + \Omega_Z
    \end{equation*}
    (with the `$+$' denoting addition of subsets in the vector space $\fg^*$).
  \item The center $Z(G)$ is connected, as are all centers of connected nilpotent Lie groups \cite[Lemma 3]{mats}. In particular, a generic element of $Z(G)$ is of the form $\exp(x)$, $x\in \fz:=Lie(Z(G))$. Adopting the $2\pi i$-scaling convention of \cite[p.xix, User's guide, item 6]{kir}, the map \Cref{eq:can} can now be identified with
    \begin{equation}\label{eq:e2pi}
      \text{coadjoint orbit }\Omega_X\mapsto (\exp(x)\mapsto e^{2\pi i f(x)})\in \widehat{Z(G)}
    \end{equation}
    for $f\in \Omega_X\subset \fg^*$ (so that the evaluation $f(x)$ makes sense: $x\in \fz\subset \fg$, and $f$ is a functional thereon).
  \end{itemize}

  Now consider The general, non-simple-connected case: $G$ is a quotient $\widetilde{G}/\Gamma$, where $\widetilde{G}$ is the (simply-connected) universal cover and $\Gamma\subset Z(\widetilde{G})$ is discrete. The discussion above applies to $\widetilde{G}$, and implies that the coadjoint orbits of $\widetilde{G}$ that survive to give unitary representations of the quotient $G$ are precisely those consisting of $f\in \fg^*$ that take integral values on the free abelian group
  \begin{equation*}
    \exp^{-1}(\Gamma)\subset \fz:=Lie(Z(\widetilde{G})). 
  \end{equation*}
  We write $\fg^*_{\Gamma\to\bZ}$ for the set of such $f\in \fg^*$; to reiterate:
  \begin{equation*}
    \fg^*_{\Gamma\to \bZ}:=\{f\in \fg^*\ |\ f(\exp^{-1}(\Gamma))\in \bZ\}.
  \end{equation*}
  The map $X\mapsto \Omega_X$ induces an identification of the chain group $C(G)$ with the quotient abelian group
  \begin{equation*}
    \fg^*_{\Gamma\to \bZ}/\mathrm{span}\{x-Ad_g x\ |\ x\in \fg^*_{\Gamma\to\bZ},\ g\in G\};
  \end{equation*}
  that is, the space $\fg^*_{\Gamma\to\bZ}/Ad$ of coinvariants under the (co)adjoint action. Vector-space duality identifies this with
  \begin{equation*}
    \{f\in \fz^* = (\fg^{Ad})^*\ |\ f(\exp^{-1}(\Gamma))\subseteq \bZ\}
  \end{equation*}
  (where the superscript $\bullet^{Ad}$ denotes invariants under the adjoint action). it follows that the map
  \begin{equation*}
    \fg^*_{\Gamma\to\bZ}/Ad\ni\text{ class of }f\mapsto (\exp(x)\mapsto e^{2\pi i f(x)})\in \widehat{Z(\widetilde{G})}
  \end{equation*}
  that implements \Cref{eq:can} (cf. \Cref{eq:e2pi}) then identifies this space with the dual
  \begin{equation*}
    \widehat{Z(\widetilde{G})/\Gamma} \cong \widehat{Z(G)},
  \end{equation*}
  as desired (this last isomorphism being a consequence of \Cref{le:condiscc}).
\end{proof}

\begin{remark}\label{re:o-o}
  In the context of \Cref{th:simp-con-nil} condition \Cref{item:3} of \Cref{def:chain} is automatic: the trivial representation is weakly contained in $X\otimes X^*$ for any irreducible $X\in \widehat{G}$, since in coadjoint-orbit language this reads as the trivial observation that
  \begin{equation*}
    \{0\}\subseteq \Omega + (-\Omega)
  \end{equation*}
  for every $Ad$-orbit $\Omega\subset\fg^*$.

  In fact, the property that the trivial representation be weakly contained in $X\otimes X^*$ for {\it every} $X\in \widehat{G}$ characterizes the amenable locally compact groups: this follows, for instance, from \cite[Theorem 5.1 and Corollary 5.5]{bek}. Amenability is not the only way to render condition \Cref{item:3} of \Cref{def:chain} redundant though: see \Cref{re:notam} below.
\end{remark}

\subsection{Discrete groups with infinite conjugacy classes}\label{subse:icc}

Before stating the next result, recall (e.g. \cite[Chapter V, Definition 7.10]{tak1}) that a group is {\it icc} (for {\it infinite conjugacy class}) if its only finite conjugacy class is $\{1\}$. 

\begin{theorem}\label{th:icc}
  Countable discrete icc groups are cc-dual in the sense of \Cref{def:ccdual}.
\end{theorem}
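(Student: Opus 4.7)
Because $G$ is icc, any central element $z$ has singleton conjugacy class $\{z\}$, which is finite and thus forces $z=e$. So $Z(G)=\{e\}$, $\widehat{Z(G)}$ is trivial, and by \Cref{cor:injenough} the goal reduces to $C(G)=\{1\}$. By \Cref{pr:redd} it then suffices to verify $g_\rho=1$ for every $\rho\in\widehat{G}_{red}$.

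The strategy I would adopt is: for each such $\rho$, produce an irreducible $\sigma\in\widehat{G}$ with $\rho\preceq \sigma\otimes \sigma^*$. The relation \Cref{item:2} of \Cref{def:chain} then gives $g_\rho=g_\sigma g_{\sigma^*}$, and the axiomatic relation \Cref{item:3} collapses the right-hand side to $g_\sigma g_\sigma^{-1}=1$. The theorem thereby reduces to exhibiting such a $\sigma$ for each $\rho$.

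The natural starting point is Fell absorption $\rho\otimes \lambda_G\cong \lambda_G^{\oplus\dim\rho}$ from \Cref{eq:abs}, which together with $\lambda_G^*\cong \lambda_G$ for discrete $G$ yields $\rho\preceq \lambda_G\otimes \lambda_G^*$; the real work is descending from the reducible $\lambda_G$ to an irreducible $\sigma$. Since discrete icc groups are never type-I (a virtually-abelian structure would force almost every conjugacy class to be finite), \Cref{pr:rec} is unavailable, and the icc hypothesis must be used more directly. It enters via the factoriality of $\pi_{\lambda_G}(C^*_r(G))''$, a $\mathrm{II}_1$ factor whose homogeneity properties constrain $\lambda_G$. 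A natural supply of candidate $\sigma$'s is induction from characters of amenable subgroups of infinite index, abundant under icc: for every $g\neq e$ the cyclic subgroup $\langle g\rangle$ is amenable with $[G:\langle g\rangle]=\infty$, and $\mathrm{Ind}_{\langle g\rangle}^G\chi$ sits in $\widehat{G}_{red}$ by Fell-continuity of induction.

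The main obstacle is precisely this descent. Without type-I reciprocity, the weak containment $\rho\preceq \sigma\otimes \sigma^*$ must be verified by hand; the delicate point is showing that $\sigma\otimes \sigma^*$, for a suitable $\sigma$ such as the induced representations above, weakly contains enough of $\widehat{G}_{red}$ to accommodate an arbitrary $\rho$. I expect this to proceed through a Mackey decomposition of tensor squares of induced representations, leveraging the fact that the $G$-action on $\widehat{\langle g\rangle}\cong \mathbb{T}$ through $G/Z_G(g)$ has infinite orbits by icc — an ergodicity-type input that provides the matrix-coefficient approximation needed for weak containment.
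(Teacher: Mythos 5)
Your setup is right (the icc condition kills $Z(G)$, and \Cref{cor:injenough} reduces everything to $C(G)=\{1\}$), and Fell absorption \Cref{eq:abs} is indeed the engine of the paper's argument. But the proof has a genuine gap exactly where you flag it: the descent from the reducible $\lambda_G$ to an irreducible $\sigma$ with $\rho\preceq\sigma\otimes\sigma^*$ is never carried out, and the route you sketch does not obviously close it. The representations $\mathrm{Ind}_{\langle g\rangle}^G\chi$ need not be irreducible, and even after extracting an irreducible $\sigma$ weakly contained in one of them, what you would actually need is $\lambda_G\preceq\sigma\otimes\sigma^*$, so that \emph{every} $\rho\in\widehat{G}_{red}$ is caught. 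Since $\sigma\preceq\lambda_G$ already forces $\sigma\otimes\sigma^*\preceq\lambda_G$ by absorption, you are asking that $\sigma\otimes\sigma^*$ be weakly \emph{equivalent} to $\lambda_G$ --- a strong assertion that the proposed Mackey analysis of orbits on $\widehat{\langle g\rangle}$ is not shown (and not known in general) to deliver; factoriality of $L(G)$ by itself does not produce it either.

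The missing ingredient is \cite[Proposition 21]{hrp}: a countable discrete icc group admits an \emph{irreducible} unitary representation $\pi$ weakly equivalent to $\lambda_G$. With that in hand the argument collapses: for arbitrary $\rho\in\widehat{G}$, Fell continuity of the tensor product and \Cref{eq:abs} give $\rho\otimes\pi\sim\rho\otimes\lambda_G\sim\lambda_G\sim\pi$, whence $g_\rho g_\pi=g_\pi$ in $C(G)$ and $g_\rho=1$. Note this handles all of $\widehat{G}$ at once (so \Cref{pr:redd} is not needed) and never invokes relation \Cref{item:3} of \Cref{def:chain}. Your $\sigma\otimes\sigma^*$ strategy can also be completed from the same input by taking $\sigma=\pi$, since then $\rho\preceq\lambda_G\sim\pi\otimes\pi^*$ for every $\rho\in\widehat{G}_{red}$; but the existence of such a $\pi$ is precisely the point that must be supplied.
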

\begin{proof}
  Let $G$ be such a group. The center is trivial by the icc condition, so the goal is to show that $C(G)$ is also trivial.

  According to \cite[Proposition 21]{hrp} $G$ has an irreducible representation $\pi\in \widehat{G}$ weakly equivalent (denoted below by `$\sim$') to the regular representation $\lambda_G$. The Fell continuity of tensoring together with the absorption property \Cref{eq:abs} then shows that for arbitrary $\rho\in\widehat{G}$ we have
  \begin{equation*}
    \rho\otimes\pi \sim \rho\otimes\lambda_G\sim \lambda_G\sim \pi.
  \end{equation*}
  In the group $C(G)$ this reads $\rho\cdot \pi = \pi$, which is only possible if the arbitrary $\rho\in C(G)$ is trivial.
\end{proof}

\subsection{Some easily accessible semisimple examples}\label{subse:ss-easy}

The present section, like \Cref{se:ss}, is concerned with connected semisimple Lie groups. These are all second-countable and type-I \cite[Theorem 14.6.10]{wal2}. Even better, in fact; that result says that the universal $C^*$-algebra $C^*(G)$ is {\it CCR} \cite[\S 14.6.9]{wal2}, or {\it liminal} in the sense of \cite[Definition 4.2.1]{dixc}: all operators
\begin{equation*}
  \pi(a),\ a\in C^*(G)\text{ for an irreducible }\pi:C^*(G)\to B(H)
\end{equation*}
are compact.

Although \Cref{th:sln} and \Cref{th:cplx} will be superseded by \Cref{se:ss} below, the proofs are simpler because in the cases treated here more is known about tensor products of irreducible unitary representations.

\begin{theorem}\label{th:sln}
  All connected Lie groups locally isomorphic to $SL(n,\bR)$, $n\ge 2$ are cc-dual.
\end{theorem}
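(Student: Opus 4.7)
The plan is to reduce first to the adjoint quotient and there invoke a tensor-product absorption property of tempered irreducibles specific to noncompact semisimple groups.

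Every connected Lie group $G$ locally isomorphic to $SL(n,\bR)$ is a quotient $\widetilde{G}/\Gamma$ of the universal cover by a discrete central $\Gamma\subseteq Z(\widetilde{G})$; its own adjoint quotient $G_{ad}:=G/Z(G)$ is therefore isomorphic to $PSL(n,\bR)$ (with the convention $PSL(n,\bR) = SL(n,\bR)$ for odd $n$). The center of any such $G$ is discrete, being a quotient of the (discrete) center of $\widetilde{G}$, so \Cref{cor:quotenough} reduces the theorem to cc-duality for $G_{ad}$. Since $Z(G_{ad})$ is trivial, the canonical map $\cat{can}\colon C(G_{ad})\to \widehat{Z(G_{ad})}=1$ is an isomorphism precisely when $C(G_{ad})=1$.

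To see that $C(PSL(n,\bR))$ is trivial, set $G=PSL(n,\bR)$ and apply \Cref{pr:redd}: it suffices to show that $g_\rho = 1\in C(G)$ for each $\rho\in \widehat{G}_{red}$. The group $G$ is simple, noncompact, of real rank $n-1\ge 1$, and centerless, so it enjoys the Kunze-Stein convolution property. A classical consequence---due to Fell and refined substantially by Cowling and by Cowling-Haagerup-Howe---is that for every nontrivial tempered irreducible $\pi\in\widehat{G}_{red}$ there is a weak equivalence
\begin{equation*}
  \pi\otimes \pi^* \sim \lambda_G.
\end{equation*}
Fixing any such $\pi$, an arbitrary $\rho\in\widehat{G}_{red}$ is weakly contained in $\lambda_G$ and hence in $\pi\otimes\pi^*$; the tensor-product relation of \Cref{def:chain} then forces
\begin{equation*}
  g_\rho \;=\; g_\pi\cdot g_{\pi^*} \;=\; g_\pi\cdot g_\pi^{-1} \;=\; 1
\end{equation*}
in $C(G)$. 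Together with \Cref{pr:redd}, this delivers $C(G)=1$ and completes the argument.

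The technical heart of the proof is the weak equivalence $\pi\otimes\pi^*\sim \lambda_G$ for nontrivial tempered $\pi$. The direction $\pi\otimes\pi^*\preceq\lambda_G$ (temperedness of the tensor product) is a relatively soft consequence of matrix-coefficient decay for tempered representations combined with the Kunze-Stein convolution bounds; the reverse containment $\lambda_G\preceq \pi\otimes\pi^*$ is the substantive content and is precisely where the noncompact-semisimple and positive-rank hypotheses enter essentially (the analogous containment typically fails for amenable groups). Conceptually, this tensor-absorption plays the role, for semisimple $G$, that the existence of an irreducible representation weakly equivalent to $\lambda_G$ played for the icc discrete groups of \Cref{th:icc}.
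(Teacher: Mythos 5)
Your overall architecture (reduce to $PSL(n,\bR)$ via \Cref{cor:quotenough}, then kill $C(PSL(n,\bR))$ using \Cref{pr:redd}) matches the paper's, but the technical heart of your argument --- the claim that $\pi\otimes\pi^*\sim\lambda_G$ for \emph{every} nontrivial tempered irreducible $\pi$ --- is not a theorem, and is in fact false. Take $G=PSL(2,\bR)$ and $\pi=D_m^+$ a (holomorphic) discrete series representation, so $\pi^*\cong D_m^-$. By the explicit decompositions of Repka \cite{repka}, $D_m^+\otimes D_m^-$ is a direct integral over the principal series together with at most finitely many discrete series summands. But the discrete series are \emph{isolated} points of $\widehat{G}_{red}$ (each contributes a direct summand isomorphic to the compacts inside $C^*_{red}(G)$), so an isolated $D_k$ is weakly contained in a tempered representation only if it occurs as an honest direct summand. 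Hence all but finitely many $D_k\in\widehat{G}_{red}$ fail to be weakly contained in $\pi\otimes\pi^*$, and $\lambda_G\not\preceq\pi\otimes\pi^*$. What the Kunze--Stein property and the Cowling--Haagerup--Howe estimates deliver is only the direction you yourself call ``soft,'' namely $\pi\otimes\pi^*\preceq\lambda_G$ --- and even that is automatic for tempered $\pi$ from the absorption property \Cref{eq:abs} together with Fell continuity of $\otimes$. There is no general theorem in that circle of ideas giving the reverse containment, which is exactly the step your proof rests on.

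The argument would survive if you exhibited a \emph{single} irreducible tempered $\pi$ with $\lambda_G\preceq\pi\otimes\pi^*$, and an irreducible principal series representation is a correct choice --- but verifying that its self-tensor weakly contains the whole reduced dual (including every isolated discrete series point when $n=2$) requires precisely the explicit tensor-product decompositions of Pukánszky/Repka/Martin. That is what the paper does: for $n\ge 3$ it quotes Martin's theorem that the tensor product of two principal series of $PSL(n,\bR)$ is weakly equivalent to (the relevant central-character block of) the regular representation, and for $n=2$ it uses Repka's lists to first identify all principal series classes in $C(G)$ and then absorb the discrete series by tensoring them against a principal series representation. In short: the skeleton of your proof is fine, but the load-bearing lemma needs to be both corrected (existential, not universal, and with $\pi$ principal series) and actually proved from the decomposition literature rather than from Kunze--Stein.
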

\begin{proof}
  Fix a positive integer $n\ge 2$ throughout. All Lie groups as in the statement cover $PSL(n,\bR)$ with discrete kernel, so the result follows from \Cref{cor:quotenough} if we prove it for projective special linear groups. As the literature typically treats the representation theory of $SL$ rather than $PSL$ we take a detour, discussing the former in order to reach the latter. 
  
  The {\it principal-series} (e.g. \cite[Chapter VII, \S 3]{knp-rep}) representations $\pi_{\sigma,\chi}$ of $G:=SL(n,\bR)$ are those of the form
  \begin{equation*}
    \mathrm{Ind}_{MAN}^G (\sigma\otimes \chi\otimes\mathrm{triv})
  \end{equation*}
  where
  \begin{itemize}
  \item $M\subset G=SL(n,\bR)$ is the group of diagonal matrices with entries $\pm 1$ and $\sigma$ is a unitary irreducible representation thereof;
  \item $A\subset G$ is the group of positive-entry diagonal matrices and $\chi$ one of its unitary characters;
  \item $\mathrm{triv}$ is the trivial representation on the subgroup $N\subset G$ of upper triangular matrices with identity diagonal. 
  \end{itemize}
  ``Most'' $\pi_{\sigma,\chi}$ are irreducible, in the sense that this happens for $\chi$ ranging over an open dense set (\cite[Corollary 12.5.4]{wal2}); for that reason, tensor products of principal-series representations are relevant to computing $C(G)$.

  \vspace{.5cm} 
  
  {\bf Case 1: $n\ge 3$.} The direct-integral decompositions of tensor products $\pi_{\sigma,\chi}\otimes \pi_{\sigma',\chi'}$ are described in \cite[p.210, Theorem]{mart}:
  \begin{itemize}
  \item When $n$ is odd the center $Z(G)$ is trivial and $SL(n,\bR)=PSL(m,\bR)$. In that case the tensor product of {\it any} two principal-series representations is nothing but the regular representation. All $\rho\in \widehat{G}_{red}$ are thus identified in the chain group $C(G)$, so the latter must be trivial by \Cref{pr:redd}.
  \item For even $n$, when the center $Z(G)$ is $\pm 1$, the tensor product in question decomposes as
    \begin{equation*}
      \pi_{\sigma,\chi}\otimes \pi_{\sigma',\chi'} \cong \int_{\widehat{G}_{\pm}}\pi^{\oplus \aleph_0}\ \mathrm{d}\mu_G(\pi)
    \end{equation*}
    where $\mu_G$ is the Plancherel measure (in conformity with prior notation), and $\widehat{G}_{\pm}$ is the set of irreducible unitary representations whose central character is trivial (for `$+$') or not (for `$-$'). In passing to $PSL(n,\bR)=G/Z(G)$ only the $+$ portion survives, once again collapsing all of $\widehat{G/Z(G)}_{red}$ to a singleton in the chain group.
  \end{itemize}

  \vspace{.5cm}

  {\bf Case 2: $n=2$.} We can argue along the same lines as for $n\ge 3$, with the added complication that now the reduced dual, in addition to the principal series, also contains {\it discrete-series} representations \cite[Proposition 9.6]{knp-rep}: those that appear as summands of the regular representation $L^2(G)$ (rather than just being {\it weakly} contained therein).

  Now, {\it all} tensor products of two irreducible representations of $SL(2,\bR)$ are described in \cite[\S 4]{repka}. In that source's notation, in passing to $PSL(2,\bR)$, the surviving representations of interest are
  \begin{itemize}
  \item $\pi_{r,\mathrm{triv}}$ for $r\in i\bR_{\ge 0}$ (principal series);    
  \item and $T_{\pm n}$, $n\ge 2$ even (discrete series).
  \end{itemize}
  \cite[\S 4, (d)]{repka} shows that the tensor product between a discrete- and a principal-series representation weakly contains the entire principal series of $PSL(2,\bR)$, so the chain group is exhausted by the classes of the principal-series elements.

  Finally, to conclude, note that by \cite[\S 4, (a)]{repka} (or, say, \cite[\S VI B., Theorem]{mart}) we have a direct summand
  \begin{equation*}
    \pi_{r,\mathrm{triv}} \otimes \pi_{r',\mathrm{triv}} \ge_{\oplus} \int(\text{entire principal series of }PSL);
  \end{equation*}
  in other words, the tensor product of two principal-series representations again weakly contains all of the latter. As desired, then, the chain group is trivial.
\end{proof}

\begin{theorem}\label{th:cplx}
  Connected, complex semisimple Lie groups are cc-dual. 
\end{theorem}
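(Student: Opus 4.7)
The plan parallels the proof of \Cref{th:sln}: identify the reduced dual of a connected complex semisimple $G$ with a concrete space of principal series, then use tensor-product decompositions together with a Weyl-invariance / root-lattice computation to match $C(G)$ with $\widehat{Z(G)}$. Since $G$ is second-countable and type-I (in fact CCR, per the opening of \Cref{subse:ss-easy}) and has no compact Cartan subgroup---hence no discrete series---the Harish-Chandra Plancherel theorem identifies $\widehat{G}_{red}$ with the unitary principal series
\[
  \pi_{\sigma,\nu}=\mathrm{Ind}_{MAN}^{G}\bigl(\sigma\otimes e^{\nu}\otimes\1\bigr),
\]
where $P=MAN$ is a minimal parabolic, $M=T$ is a maximal torus of a maximal compact subgroup $K$, $A=\exp(\fa)$ is the split Iwasawa component, $\sigma\in\widehat{T}$ and $\nu\in i\fa^{*}$. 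These principal series are all irreducible (a well-known feature of the complex case) and satisfy $\pi_{\sigma,\nu}\cong \pi_{w\sigma, w\nu}$ for $w$ in the Weyl group $W=N_{K}(T)/T$. Since $Z(G)\subseteq T$, the central character of $\pi_{\sigma,\nu}$ is $\sigma|_{Z(G)}$, so the map $\cat{can}$ of \Cref{def:ccdual} sends $[\pi_{\sigma,\nu}]$ to $\sigma|_{Z(G)}$, and by \Cref{pr:redd} these classes generate all of $C(G)$.

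The decisive computational input, in analogy with the invocation of \cite[p.~210]{mart} in the proof of \Cref{th:sln}, is a tensor-product decomposition for the unitary principal series. Using the projection formula
\[
  \pi_{\sigma_{1},\nu_{1}}\otimes \mathrm{Ind}_{MAN}^{G}(\sigma_{2}\otimes e^{\nu_{2}})\cong \mathrm{Ind}_{MAN}^{G}\bigl(\pi_{\sigma_{1},\nu_{1}}|_{MAN}\otimes \sigma_{2}\otimes e^{\nu_{2}}\bigr)
\]
together with the Bruhat description of the restriction $\pi_{\sigma_{1},\nu_{1}}|_{MAN}$, one expresses $\pi_{\sigma_{1},\nu_{1}}\otimes \pi_{\sigma_{2},\nu_{2}}$ as a direct integral supported on principal series whose $M$-parameter lies in $W\cdot(\sigma_{1}\sigma_{2})$ and whose $A$-parameter ranges over all of $i\fa^{*}$. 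In $C(G)$ this reads
\[
  [\pi_{\sigma_{1},\nu_{1}}]\cdot[\pi_{\sigma_{2},\nu_{2}}] = [\pi_{\sigma_{1}\sigma_{2},\nu}]\quad\text{for every }\nu\in i\fa^{*}.
\]
Specializing $\sigma_{1}=\sigma_{2}=\1$ and $\nu_{1}=\nu_{2}=\nu=0$ gives $[\pi_{\1,0}]^{2}=[\pi_{\1,0}]$, hence $[\pi_{\1,0}]=1$; substituting back shows both that $[\pi_{\sigma,\nu}]$ is independent of $\nu$ and that $\sigma\mapsto [\pi_{\sigma,0}]$ is a group homomorphism $\widehat{T}\to C(G)$.

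Weyl-invariance $\pi_{\sigma,0}\cong\pi_{w\sigma,0}$ factors this homomorphism through the $W$-coinvariants $\widehat{T}/(1-W)\widehat{T}$. Since each reflection $s_{\alpha}$ acts on $X^{*}(T)=\widehat{T}$ by $\lambda\mapsto \lambda-\langle\lambda,\alpha^{\vee}\rangle\alpha$, the subgroup $(1-W)\widehat{T}$ is the root lattice $Q$, and the standard identification $X^{*}(T)/Q\cong \widehat{Z(G)}$ for semisimple groups yields a surjection $\widehat{Z(G)}\twoheadrightarrow C(G)$ whose composition with $\cat{can}$ is the identity. \Cref{cor:injenough} then forces both arrows to be isomorphisms.

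The main technical obstacle is the tensor-product input of the second paragraph: one must verify, either by citation or through direct Mackey/Bruhat manipulation of $\pi_{\sigma_{1},\nu_{1}}|_{MAN}$, that every $\nu\in i\fa^{*}$ occurs in the decomposition of the tensor product with $M$-parameter $\sigma_{1}\sigma_{2}$ (up to the Weyl action). Given that density statement, the remainder of the argument is formal manipulation of group-theoretic relations in $C(G)$.
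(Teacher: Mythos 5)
Your overall skeleton matches the paper's: identify $\widehat{G}_{red}$ with the (irreducible) unitary principal series via the Harish-Chandra Plancherel formula, feed a tensor-product decomposition into $C(G)$, and conclude via \Cref{pr:redd} and \Cref{cor:injenough}. The gap lies in your ``decisive computational input,'' and it is not merely a missing citation: the statement you propose to verify is wrong, and the correct statement makes your Weyl-coinvariant step unnecessary. The decomposition of $\pi_{\sigma_1,\nu_1}\otimes\pi_{\sigma_2,\nu_2}$ (this is \cite[Theorem 4.5.9]{wil}, the reference the paper relies on) is supported on \emph{all} principal series whose central character equals $(\sigma_1\sigma_2)|_{Z(G)}$ --- not merely those whose $M$-parameter lies in the Weyl orbit $W\cdot(\sigma_1\sigma_2)$. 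Already for $G=PSL(2,\bC)$ the tensor product of two spherical principal series weakly contains \emph{every} principal series of $G$, not just the spherical ones, so your claimed support is too small.

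This matters downstream because the weaker relation you extract only shows that $\sigma\mapsto[\pi_{\sigma,0}]$ kills $(1-W)\widehat{T}$, and $(1-W)\widehat{T}$ equals the root lattice $Q$ only when $G$ is simply connected. In general $(1-W)X^*(T)\subsetneq Q$: for $PSL(2,\bC)$ one has $X^*(T)=Q=\bZ\alpha$ while $(1-W)X^*(T)=2\bZ\alpha$, so the coinvariants are $\bZ/2$ whereas $\widehat{Z(G)}$ is trivial. Your argument therefore only produces a surjection $X^*(T)/(1-W)X^*(T)\twoheadrightarrow C(G)$ from a group that can be strictly larger than $\widehat{Z(G)}$, which does not force $\cat{can}$ to be injective (it leaves open $C(PSL(2,\bC))\cong\bZ/2$). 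Nor can you escape by treating only the simply connected case, since \Cref{cor:quotenough} transfers cc-duality from quotients up to covers, not the other way around. With the correct form of Williams's theorem the fix is immediate --- the class of $\pi_{\sigma,\nu}$ in $C(G)$ depends only on the central character $\sigma|_{Z(G)}$ outright, so $\cat{can}$ is injective on the principal series and no coinvariant computation is needed; that is exactly how the paper argues.
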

\begin{proof}
  This is very similar in spirit to the proof of \Cref{th:sln}, this time using the decomposition of tensor products obtained in \cite{wil}.  
 
  For complex semisimple Lie groups the principal-series representations $\pi_{\lambda}$ are those induced from 1-dimensional unitary representations $\lambda$ of a Borel subgroup $B\subset G$ (termed {\it non-degenerate} principal series in \cite[Introduction]{wil}). By \cite[Theorem 4.5.9]{wil} we have a decomposition
  \begin{equation*}
    \pi_{\lambda}\otimes\pi_{\lambda'}\cong \int\pi_{\theta}^{\oplus\text{multiplicity}}\ \mathrm{d}\mu(\theta)
  \end{equation*}
  for a measure $\mu$ supported on precisely those $\pi_{\theta}$ whose central character is the product of the central characters of $\pi_{\lambda}$ and $\pi_{\lambda'}$. Once more, then, \Cref{eq:can} is one-to-one when restricted to the principal series. 

  To conclude, note that in the complex case the principal series comprises all of $\widehat{G}_{red}$. This follows from the Plancherel formula for $G$, originally due to Harish-Chandra \cite{hc-pl}; without expanding on such a broad topic here, the only thing to note is that the formula (e.g. \cite[equation (4.2.7)]{wil}) involves integration over only the principal series, and the conclusion that the latter constitutes all of $\widehat{G}_{red}$ then follows from the uniqueness of the Plancherel measure \cite[Theorem 14.11.2 (3)]{wal2}.
\end{proof}

\begin{remark}\label{re:notam}
  In both \Cref{th:sln} and \Cref{th:cplx} condition \Cref{item:3} of \Cref{def:chain} was redundant, despite these groups being non-amenable; cf. \Cref{re:o-o}
\end{remark}

\section{Arbitrary connected semisimple Lie groups}\label{se:ss}

This section's main result is

\begin{theorem}\label{th:rr1}
  Connected semisimple Lie groups are cc-dual.
\end{theorem}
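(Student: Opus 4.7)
The plan is to reduce to the adjoint form of $G$ and then collapse $C(G)$ to a single element using Harish-Chandra's subquotient theorem together with a Mackey--Bruhat tensor-product calculation for minimal principal series, extending the pattern of \Cref{th:sln} and \Cref{th:cplx} uniformly across all real forms.

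First I reduce to the centerless case. Since the center $Z(G)$ of a connected semisimple Lie group is discrete (the center of a semisimple Lie algebra vanishes), \Cref{cor:quotenough} allows me to replace $G$ by the adjoint group $G/Z(G)$; because $\widehat{Z(G/Z(G))}$ is trivial, cc-duality for the adjoint form amounts to the assertion $C(G) = 1$ under the standing hypothesis $Z(G) = 1$, which I impose. Next, fix an Iwasawa decomposition $G = KAN$ and the associated minimal parabolic $P = MAN$ with $M = Z_K(A)$, and write $\pi_{\sigma,\nu} := \mathrm{Ind}_P^G(\sigma \otimes e^{i\nu} \otimes \mathbf{1})$ for the minimal principal series attached to $\sigma \in \widehat{M}$ and $\nu \in \fa^*$. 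By Harish-Chandra's subquotient theorem (\cite[Chapter~8]{wal2}), every tempered irreducible is weakly contained in some $\pi_{\sigma,\nu}$---after a reduction-in-stages for cuspidal parabolics other than $P$ to re-express the discrete-series data of the relevant Levi as principal series of that Levi---and combined with \Cref{pr:redd} the classes $\{[\pi_{\sigma,\nu}]\}$ generate $C(G)$.

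Second, I compute tensor products. Mackey's identity yields
\[
\pi_{\sigma_1,\nu_1} \otimes \pi_{\sigma_2,\nu_2} \;\cong\; \mathrm{Ind}_P^G\bigl((\sigma_1 \otimes e^{i\nu_1} \otimes \mathbf{1}) \otimes \pi_{\sigma_2,\nu_2}|_P\bigr),
\]
and the Bruhat decomposition $G = \bigsqcup_{w \in W} P w P$ (with $W = N_K(A)/Z_K(A)$) analyzes the restriction cell by cell; the open dense cell produces a direct integral of principal series $\pi_{\tau,\mu}$ whose parameters are Weyl-twisted combinations of the $(\sigma_i, \nu_i)$. Together with the contragredient identification $\pi_{\sigma,\nu}^* \cong \pi_{\sigma^*,-\nu}$ and Fell continuity in $\nu$, this lets me identify all the generators $[\pi_{\sigma,\nu}]$ with a single ``reference'' class, which in turn is forced to be trivial by the appearance of $\pi_{\mathbf{1},0}$ as a weak-containment constituent of $\pi_{\mathbf{1},0} \otimes \pi_{\mathbf{1},0}^*$ in the centerless setting. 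This collapses $C(G)$ to the identity, giving the desired cc-duality.

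The main obstacle is step two, specifically the tensor-product decomposition for an arbitrary real form. For complex semisimple groups and $SL(n,\bR)$ one has the explicit results of \cite{wil} and \cite{mart}, but the general case requires a direct Mackey/Bruhat analysis or an appeal to more recent machinery. Since only weak containment is needed for the chain-group relations, a generic-parameter argument combined with Fell continuity should suffice, but care is needed around the reducibility locus of the principal series. The other delicate point is exhibiting at least one principal series class forced to be trivial by the tensor-product relations (rather than merely showing all generators coincide); this is where the centerlessness reduction $Z(G) = 1$ earns its keep, preventing a residual $\widehat{M}$-level central character from obstructing the triviality.
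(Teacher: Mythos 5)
Your reduction to the adjoint group via \Cref{cor:quotenough} is legitimate (the center of a connected semisimple Lie group is discrete, and by \Cref{le:condiscc} the adjoint quotient is centerless), and it is in fact a slightly cleaner starting point than the paper's reduction to finite center. But the two substantive steps both have genuine gaps. First, the claim that the classes $[\pi_{\sigma,\nu}]$ of minimal principal series generate $C(G)$ does not follow from the subquotient theorem plus \Cref{pr:redd}: the tempered dual contains discrete series and, more generally, constituents of representations induced from discrete series of non-minimal cuspidal parabolics, and these are \emph{not} weakly contained in any $\pi_{\sigma,\nu}$ (for $PSL(2,\bR)$ the discrete series are Plancherel atoms disjoint from the principal series). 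Your parenthetical ``reduction in stages'' does not repair this, because the discrete series of a non-compact Levi $M'$ are not weakly contained in the principal series of $M'$ either. The paper closes exactly this gap with \Cref{th:dpp}: for \emph{any} irreducible $\rho$ and any principal series $\pi$, the product $\rho\otimes\pi$ weakly contains an irreducible constituent of a principal series (proved by restricting $\rho$ to $MAN$ and using that $\mathrm{Ad}(A)$-invariant sets of coadjoint orbits in $\fn^*$ accumulate at $0$). That absorption statement, not generation by weak containment, is what lets one work with the principal series alone.

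Second, the assertion that the Mackey--Bruhat tensor-product analysis ``identifies all the generators $[\pi_{\sigma,\nu}]$ with a single reference class'' is not established and is the actual heart of the matter. Martin's identity $\pi_{\sigma,\chi}\otimes\pi_{\sigma',\chi'}\cong\mathrm{Ind}_{MA}^G((\sigma\otimes\sigma')\otimes\chi\chi')$ only yields a surjective group morphism $C(M)\to C(G)$ (the paper's \Cref{pr:mtog}); it kills the continuous parameter $\nu$ (\Cref{pr:nochi}) but says nothing about collapsing distinct $\sigma\in\widehat{M}$. Since $M$ can be abelian or have large center (for $SL(n,\bR)$, $M\cong(\bZ/2)^{n-1}$, so $C(M)=\widehat{M}$ is large even when $G$ is centerless), Weyl twisting and the contragredient relation do not force triviality. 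The missing input is the pair of observations in \Cref{pr:psim}, \Cref{pr:psik} and \Cref{le:zkm}: non-disjointness of $\mathrm{Ind}_M^K\sigma_i$ (equivalently, of restrictions $\sigma_i|_M$ for $K$-types) forces equal classes in $C(G)$, which transports the problem to the compact group $K$, where M\"uger's theorem applies; the identity $Z(K)\cap M=Z(G)$ then shows the kernel is exactly the representations with trivial central character. Your final step (triviality of $[\pi_{\mathbf{1},0}]$ via $\pi^*_{\sigma,\nu}\cong\pi_{\sigma^*,-\nu}$) is fine but is the easy part; as written, the argument does not get the other generators down to that reference class.
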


The proof requires some background. 

Fix a connected semisimple Lie group (we will specialize further soon). Recall, first, the {\it Iwasawa decomposition} (\cite[Theorem 6.46]{knp-bi} or \cite[Chapter IX, Theorem 1.3]{helg}).
\begin{equation}\label{eq:iw}
  K\times A\times N\cong G,
\end{equation}
where
\begin{itemize}
\item $K\le G$ contains the center $Z(G)$ \cite[Theorem 6.31]{knp-bi} and is maximal compact in $G$ if that center is finite (which we henceforth assume);
\item $A\le G$ is closed, abelian and simply-connected;
\item $N\le G$ is closed, nilpotent and simply-connected;
\item and the isomorphism \Cref{eq:iw} is given my multiplication in $G$. 
\end{itemize}
Per \cite[Chapter IX, discussion preceding Theorem 1.1]{helg} (or \cite[Chapter VII, \S 6]{knp-bi}) the {\it real rank} of $G$ is $\dim A$. 

Defining
\begin{equation}\label{eq:defm}
  M:=Z_K(\fa:=Lie(A))
\end{equation}
(the centralizer of the Lie algebra $\fa$ under the adjoint action by $K$ on $\fg:=Lie(G)$) as in, say \cite[Chapter IX, \S 1]{helg}, $B:=MAN\le G$  is a {\it minimal parabolic subgroup} of $G$; see also \cite[Chapter VII, \S 7]{knp-bi}.

The {\it principal-series} \cite[Chapter VII, \S 3]{knp-rep} unitary representations of $G$ are those of the form
\begin{equation*}
  \pi_{\sigma,\chi}:=\mathrm{Ind}_{MAN}^G (\sigma\otimes \chi\otimes\mathrm{triv}),\ \sigma\in\widehat{M},\ \chi\in\widehat{A}
\end{equation*}
as in the proof of \Cref{th:sln}.

$\pi_{\sigma,\nu}$ need not be irreducible in full generality (there are reducible examples for $SL(2n,\bR)$, for instance \cite[Example (1), preceding Theorem 3]{ks1}), but for every $\sigma\in \widehat{M}$ the set of those $\chi\in\widehat{A}$ for which $\pi_{\sigma,\chi}$ {\it is} irreducible contains an open dense subset (\cite[Corollary 12.5.4]{wal2} or \cite[Theorem 5.5.2.1]{war1}). Furthermore, all $\pi_{\sigma,\chi}$ decompose as finite sums of irreducible representations \cite[Corollary 5.5.2.2]{war1}.

Given that most of the tensor-product-decomposition literature seems to focus on the principal series, the following auxiliary result will come in handy in arguing that that is more or less enough for the purpose of studying the chain group $C(G)$.

\begin{theorem}\label{th:dpp}
  Let $G$ be a connected semisimple Lie group with finite center.

  If $\rho,\pi\in\widehat{G}$ are irreducible unitary representations with $\pi$ principal-series the tensor product $\rho\otimes\pi$ weakly contains an irreducible summand of some principal-series representation.
\end{theorem}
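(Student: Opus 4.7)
My plan is to reduce the claim, via the projection formula, to a weak-containment statement inside the minimal parabolic $B=MAN$, and then to construct $N$-almost-invariant vectors in $\rho$ by conjugating with elements of $A$ pushed to infinity in the positive Weyl chamber. Concretely: writing $\pi=\pi_{\sigma,\chi}=\mathrm{Ind}_{B}^{G}(\sigma\otimes\chi\otimes 1_N)$ and applying the projection formula \cite[Theorem 2.58]{kt} gives
\[
\rho\otimes\pi\;\cong\;\mathrm{Ind}_{B}^{G}\bigl(\rho|_{B}\otimes\sigma\otimes\chi\bigr),
\]
so by Fell-continuity of induction it suffices to exhibit $\sigma'\in\widehat{M}$ and $\chi'\in\widehat{A}$ with $\sigma'\otimes\chi'\otimes 1_N\preceq\rho|_{B}\otimes\sigma\otimes\chi$ as unitary $B$-representations; inducing then yields $\pi_{\sigma',\chi'}\preceq \rho\otimes\pi$, and any of its finitely many irreducible summands (by \cite[Corollary 5.5.2.2]{war1}) provides the desired principal-series constituent.

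The main step is to show that $\rho|_N$ weakly contains the trivial representation of $N$. Pick any unit vector $v_0\in H_{\rho}$ and any $H$ in the open positive Weyl chamber of $\fa$, and set $v_t:=\rho(\exp tH)v_0$. A direct computation using the unitarity of $\rho$ gives
\[
\bigl\|\rho(n)v_t-v_t\bigr\|=\bigl\|\rho\bigl(\exp(-tH)\,n\,\exp(tH)\bigr)v_0-v_0\bigr\|.
\]
The adjoint action $\mathrm{Ad}(\exp(-tH))$ scales each positive-root space $\fg_\alpha\subset\fn$ by $e^{-t\alpha(H)}$, which tends to $0$ since $\alpha(H)>0$ for every positive root; hence $\exp(-tH)\,n\,\exp(tH)\to e$ as $t\to+\infty$, uniformly on compact subsets of $N$, and strong continuity of $\rho$ at $e$ then shows that the right-hand side tends to $0$ uniformly on compacta in $N$, i.e.\ $1_N\preceq\rho|_N$. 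This is the heart of the argument and the one place that really uses semisimplicity: it supplies a non-trivial $A$ acting on $\fn$ with strictly positive eigenvalues, contracting $N$ and manufacturing $N$-almost-invariant vectors out of any unit vector of $\rho$.

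To complete the proof, consider the matrix-coefficient states $\phi_t(b):=\langle\rho(b)v_t,v_t\rangle$ on $C^{*}(B)$ and extract a weak-* accumulation point $\phi$. The preceding paragraph gives $\phi|_{N}=1_N$, so the cyclic vector of the GNS representation $\pi_\phi$ is $N$-fixed; by normality of $N$ in $B$, this forces $\pi_\phi$ itself to be trivial on $N$, hence to factor through $B\to MA\cong M\times A$. By construction $\pi_\phi\preceq \rho|_{B}$, and decomposing the type-I representation $\pi_\phi$ of $M\times A$ into irreducibles extracts some $\sigma''\otimes\chi''\otimes 1_N\preceq \rho|_{B}$. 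Tensoring with $\sigma\otimes\chi$ and splitting the finite-dimensional $M$-factor into irreducibles then yields the required weak containment; this final step is only formal bookkeeping via weak-* compactness and Mackey decomposition.
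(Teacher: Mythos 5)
Your argument is correct, and although it opens with the same reduction as the paper's proof --- the projection formula $\rho\otimes\pi_{\sigma,\chi}\cong\mathrm{Ind}_{MAN}^{G}\bigl(\rho|_{MAN}\otimes\sigma\otimes\chi\otimes\mathrm{triv}\bigr)$ followed by Fell-continuity of induction --- the key middle step is genuinely different. The paper invokes the orbit method for the simply connected nilpotent group $N$: the support of $\rho|_N$ is an $MA$-invariant closed family of coadjoint orbits in $\fn^*$, and the strict positivity of the $A$-weights on $\fn$ forces $0\in\fn^*$ into its closure. You instead run a Mautner-type contraction: for $H$ in the open positive chamber, $\exp(-tH)\,n\,\exp(tH)\to e$ uniformly on compacta of $N$, so the translates $v_t=\rho(\exp tH)v_0$ are asymptotically $N$-invariant. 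This is more elementary (no Kirillov theory needed), and your GNS-plus-normality step makes explicit the passage from ``$1_N$ is weakly contained in $\rho|_N$'' to ``some irreducible $B$-representation trivial on $N$ is weakly contained in $\rho|_B$,'' a transition the paper treats rather briskly. One refinement would tighten your last step: a weak-$*$ accumulation point of the states $\phi_t$ on $C^*(B)$ could a priori degenerate to a positive functional of norm $<1$, and weak-$*$ convergence against $L^1(B)$ by itself says nothing about values on the Haar-null subgroup $N$, so ``$\phi|_N=1_N$'' needs justification. The clean fix is to note that, since $M$ and $A$ centralize $\exp(tH)$, one has $\phi_t(man)=\bigl\langle\rho\bigl(ma\cdot\exp(-tH)n\exp(tH)\bigr)v_0,v_0\bigr\rangle$, which converges uniformly on compacta of $B$ to the continuous positive-definite function $man\mapsto\langle\rho(ma)v_0,v_0\rangle$; this is manifestly a state trivial on $N$, and Fell's criterion then gives $\pi_\phi\preceq\rho|_B$ with no compactness extraction. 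The remaining bookkeeping (tensoring with $\sigma\otimes\chi\otimes\mathrm{triv}$, inducing, splitting $\sigma''\otimes\sigma$ and then $\pi_{\sigma',\chi'}$ into finitely many irreducibles, and using transitivity of weak containment) is exactly as you describe.
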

\begin{proof}
  The usual notation is in place: $G=KAN$ and $\pi=\pi_{\sigma,\chi}$ is induced from the minimal parabolic subgroup $B:=MAN$. We thus have
  \begin{equation*}
    \rho\otimes\pi = \rho\otimes \mathrm{Ind}_{MAN}^G(\sigma\otimes\chi\otimes\mathrm{triv})
    \cong
    \mathrm{Ind}_{MAN}^G(\rho|_{MAN}\otimes (\sigma\otimes \chi\otimes\mathrm{triv}))
  \end{equation*}
  for some $\sigma\in\widehat{M}$ and $\chi\in\widehat{A}$, where the last isomorphism is the usual induction-restriction formula \cite[Theorem 2.58]{kt}.

  The restriction of $\rho|_{MAN}$ to $N$ is a unitary representation of that simply-connected (\cite[Theorem 6.46]{knp-bi}) nilpotent Lie group, so its {\it support} \cite[Definition 3.4.6]{dixc} (i.e. the set of irreducible unitary $\widehat{N}$-representations it weakly contains) can be identified, per \cite[p.xix, User's guide]{kir}, with a set of orbits in the dual 
  \begin{equation*}
    \fn^*:=Lie(N)^*
  \end{equation*}
  under the coadjoint action of $N$. Furthermore, because $\rho|_N$ is restricted from $MA$, that set of coadjoint orbits is in fact invariant under the coadjoint action of $MA$ on $\fn^*$.
  
  The (non-trivial) adjoint action of $A$ on the Lie algebra $\fn:=Lie(N)$ can be realized via diagonal matrices with positive real eigenvalues \cite[Chapter VI, Lemma 3.5]{helg}, so the same is true of the coadjoint action on the dual $\fn^*$. But this means that any $Ad(A)$-invariant subsets of $\fn^*$ contain $0\in\fn^*$ in their closure, and hence $\rho|_{MAN}$ will contain in its Fell closure some irreducible unitary representation trivial on $N$.

  Now, since irreducible unitary $MAN$-representations trivial on $N$ must be of the form
  \begin{equation*}
    \sigma'\otimes\chi'\otimes\mathrm{triv},\quad \sigma'\in\widehat{M},\ \chi'\in\widehat{A},
  \end{equation*}
  it follows from the Fell-continuity of the induction operation \cite[Theorem F.3.5]{bdv} that 
  \begin{equation*}
    \rho\otimes\pi
    \cong
    \mathrm{Ind}_{MAN}^G(\rho|_{MAN}\otimes (\sigma\otimes \chi\otimes\mathrm{triv}))
    \to
    \mathrm{Ind}_{MAN}^G((\sigma\otimes\sigma')\otimes \chi\chi'\otimes\mathrm{triv})
  \end{equation*}
  (the arrow denoting Fell convergence). Since the right-hand limit contains principal-series representations and the classes of the latter's irreducible constituents form a clopen subset of the reduced dual (as described, say, in \cite[\S 14.12.3]{wal2}), it follows that $\rho\otimes\pi$ must have weakly-contained an irreducible summand of some principal-series representation to begin with.
\end{proof}

The following observation will allow us to ignore the continuous parameter $\chi$ in $\pi_{\sigma,\chi}$ when handling principal-series representations.

\begin{proposition}\label{pr:nochi}
  Let $G$ be a connected semisimple Lie group with finite center with Iwasawa decomposition $G=KAN$ and a corresponding minimal parabolic subgroup $B=MAN$.

  The class in $C(G)$ of an irreducible summand $\rho\le \pi_{\sigma,\chi}$ only depends on $\sigma$.
\end{proposition}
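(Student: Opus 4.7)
The plan is to establish a tensor-product weak containment among principal series that shifts the $\widehat{A}$-parameter while preserving $\sigma$, and then kill the $\widehat{A}$-dependence using Knapp-Stein intertwiners. The key lemma is that
\begin{equation*}
  \pi_{\sigma,\chi_1\chi_2}\preceq \pi_{\sigma,\chi_1}\otimes\pi_{1,\chi_2}
\end{equation*}
for every $\sigma\in\widehat{M}$ and $\chi_1,\chi_2\in\widehat{A}$. The argument runs parallel to the proof of \Cref{th:dpp}: induction-restriction gives $\pi_{\sigma,\chi_1}\otimes\pi_{1,\chi_2}\cong \mathrm{Ind}_{MAN}^G(\pi_{\sigma,\chi_1}|_{MAN}\otimes\chi_2)$, and a sharpening of the ``trivial-on-$N$'' observation used there---via Mackey's subgroup theorem applied to the Bruhat stratification $G=\bigsqcup_{w\in W}BwB$---places the inducing character $\sigma\otimes\chi_1\otimes\mathrm{triv}_N$ in the Fell closure of $\pi_{\sigma,\chi_1}|_{MAN}$. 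Fell-continuity of induction \cite[Theorem F.3.5]{bdv} then delivers the claim.

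On the open dense locus of $\widehat{A}$ where the relevant principal series are irreducible \cite[Corollary 12.5.4]{wal2}, the chain-group defining relation promotes this to
\begin{equation*}
  g_{\pi_{\sigma,\chi_1\chi_2}}=g_{\pi_{\sigma,\chi_1}}\cdot g_{\pi_{1,\chi_2}}.
\end{equation*}
With $\sigma=1$ this exhibits $\chi\mapsto g_{\pi_{1,\chi}}$ as a partial homomorphism $\widehat{A}\to C(G)$. The Knapp-Stein intertwining theorem provides unitary equivalences $\pi_{1,\chi}\cong \pi_{1,w\chi}$ for $w$ in the restricted Weyl group $W=N_K(A)/M$, forcing $g_{\pi_{1,\chi(w\chi)^{-1}}}=1$. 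Identifying $\widehat{A}\cong \fa^*$ via the exponential, the subgroup of $\widehat{A}$ generated by $\{\chi-w\chi \mid \chi\in\widehat{A},\ w\in W\}$ is the complement of $(\fa^*)^W$ inside $\fa^*$; since $G$ is semisimple, the restricted root system spans $\fa^*$ and $(\fa^*)^W=0$, so that subgroup is all of $\widehat{A}$. Consequently $g_{\pi_{1,\chi}}=1$ in $C(G)$ for every generic $\chi$.

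With these two ingredients the proposition follows. Given $\rho\leq \pi_{\sigma,\chi}$ an irreducible summand, pick $\chi_1$ in the open dense locus on which both $\pi_{\sigma,\chi_1}$ and $\pi_{1,\chi\chi_1^{-1}}$ are irreducible. Then $\rho\preceq \pi_{\sigma,\chi}\preceq \pi_{\sigma,\chi_1}\otimes \pi_{1,\chi\chi_1^{-1}}$ is a weak containment into a tensor product of irreducibles, and the defining relation of $C(G)$ yields
\begin{equation*}
  g_\rho=g_{\pi_{\sigma,\chi_1}}\cdot g_{\pi_{1,\chi\chi_1^{-1}}}=g_{\pi_{\sigma,\chi_1}},
\end{equation*}
depending neither on $\chi$ nor on the particular summand $\rho$. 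The main obstacle is the sharpened restriction $\sigma\otimes\chi_1\otimes\mathrm{triv}_N\preceq \pi_{\sigma,\chi_1}|_{MAN}$: \Cref{th:dpp}'s proof only extracts some $MA$-character in the Fell closure, whereas here one must pin down the original inducing data via a more detailed Bruhat/Mackey analysis. The Knapp-Stein step itself is standard structural theory but still requires care in arranging compatibility between the various genericity assumptions.
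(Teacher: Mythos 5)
Your strategy is genuinely different from the paper's, and its skeleton is sound, but the step you yourself flag as the main obstacle contains a real gap. The paper's proof is a short cancellation argument built on Martin's theorems \cite{mart}: $\pi_{\sigma,\chi}\otimes\pi_{\sigma',\chi'}\cong\mathrm{Ind}_{MA}^G\bigl((\sigma\otimes\sigma')\otimes\chi\chi'\bigr)$, and this induced representation is independent of the $\widehat{A}$-parameters; choosing $\chi',\chi''$ (in terms of $\sigma',\sigma$ alone) so that $\pi_{\sigma',\chi'}$ and $\pi_{\sigma,\chi''}$ are irreducible, every irreducible weakly contained in $\rho\otimes\pi_{\sigma',\chi'}$ is also weakly contained in $\pi_{\sigma,\chi''}\otimes\pi_{\sigma',\chi'}$, and cancelling $g_{\pi_{\sigma',\chi'}}$ in the group $C(G)$ gives $g_\rho=g_{\pi_{\sigma,\chi''}}$. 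Your key lemma $\pi_{\sigma,\chi_1\chi_2}\preceq\pi_{\sigma,\chi_1}\otimes\pi_{1,\chi_2}$ is in fact true, and follows in one line from Martin's decomposition together with the Greenleaf amenability step that the paper uses in \Cref{pr:mtog}; once you have it by that route, the entire Knapp--Stein/Weyl-combinatorics ingredient becomes unnecessary, because $g_{\pi_{1,\chi}}=1$ falls out of $\sigma\mapsto g_{\pi_{\sigma,\chi}}$ being a group morphism $C(M)\to C(G)$.

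The concrete gap is in your proposed proof of the key lemma. Mackey's subgroup theorem over $B\backslash G/B\cong W$ sees only the conull open Bruhat cell (the lower-dimensional cells have measure zero and contribute nothing to the decomposition of the restriction), and $B\cap{}^{w_0}Bw_0^{-1}=MA$; so what this argument places in the Fell closure of $\pi_{\sigma,\chi_1}|_{MAN}$ is the Weyl-twisted datum ${}^{w_0}\sigma\otimes{}^{w_0}\chi_1\otimes\mathrm{triv}_N$, \emph{not} $\sigma\otimes\chi_1\otimes\mathrm{triv}_N$. This is repairable -- induce up and then apply the equivalence $\pi_{{}^{w_0}\sigma,{}^{w_0}\chi}\cong\pi_{\sigma,\chi}$ that you invoke later anyway -- but as written the ``sharpened restriction'' is not established. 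Two smaller points: the phrase ``the subgroup of $\widehat{A}$ generated by $\{\chi-w\chi\}$ is the complement of $(\fa^*)^W$'' should say that it is the $W$-invariant complementary summand of $(\fa^*)^W$ in $\fa^*$ (which is indeed all of $\fa^*$ here, since the restricted roots span $\fa^*$); and the deduction of $g_{\pi_{1,\chi}}=1$ from the partial homomorphism property needs explicit care, since differences such as $\chi(w\chi)^{-1}$ can land outside the irreducibility locus, so one must perturb within the finitely many dense open conditions before applying the chain-group relations. None of these is fatal, but each needs to be written out before the argument is complete.
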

\begin{proof}
  \cite[Theorem 1]{mart} says that
  \begin{equation}\label{eq:pipi}
    \pi_{\sigma,\chi}\otimes \pi_{\sigma',\chi'}\cong \mathrm{Ind}_{MA}^G \left((\sigma\otimes\sigma')\otimes \chi\chi'\right),
  \end{equation}
  which by \cite[Theorem 2]{mart} does not depend on $\chi$ and $\chi'$.

  Now, if $\rho\le \pi_{\sigma,\chi}$ is a simple summand then $\rho\otimes\pi_{\sigma',\chi'}$ will in turn be a summand of \Cref{eq:pipi}. As noted before, by \cite[Theorem 5.5.2.1]{war1} we can choose $\chi'$ and $\chi''$ so that $\pi_{\sigma',\chi'}$ and $\pi_{\sigma,\chi''}$ are irreducible, and every simple representation weakly contained in \Cref{eq:pipi} will then also be weakly contained in
  \begin{equation*}
    \pi_{\sigma,\chi''}\otimes\pi_{\sigma',\chi'}
  \end{equation*}
  and hence have class
  \begin{equation*}
    \pi_{\sigma,\chi''}\cdot \pi_{\sigma',\chi'}\in C(G). 
  \end{equation*}
  Since $\chi'$ and $\chi''$ can be chosen in terms of $\sigma'$ and $\sigma$ alone, independently of $\chi$, we are done.
\end{proof}

As a consequence, as mentioned above, as far as $C(G)$ goes we can focus on the principal series only.

\begin{corollary}\label{cor:pionly}
  For a semisimple connected group with finite center the canonical map \Cref{eq:can} restricted to classes of irreducible principal-series representations is onto. 
\end{corollary}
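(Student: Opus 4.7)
The plan is to combine the induction-from-$Z(G)$ argument of \Cref{le:surj} with the principal-series setup. The key structural fact is that the center $Z(G)$ sits inside the compact group $M$ from the minimal parabolic $B = MAN$, so central characters of principal-series representations are controlled entirely by $\widehat{M}$.

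First I would verify $Z(G) \subseteq M$. By \Cref{eq:iw}, $K \cap A = \{1\}$ and $K \cap N = \{1\}$, so since $Z(G) \subseteq K$ the center meets $AN$ trivially and hence $Z(G) \hookrightarrow MAN$ lands inside $M$. Moreover $Z(G)$ commutes with $\mathfrak{a}$ (indeed with all of $\mathfrak{g}$), so the inclusion $Z(G) \subseteq M = Z_K(\mathfrak{a})$ is consistent with the definition \Cref{eq:defm}, and in fact $Z(G) \subseteq Z(M)$.

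Next, for any $\sigma \in \widehat{M}$ and $\chi \in \widehat{A}$, Schur applied to $\sigma$ (restricted to $Z(G) \subseteq Z(M)$) gives a scalar character $\chi_\sigma \in \widehat{Z(G)}$ with $\sigma|_{Z(G)} \cong \chi_\sigma^{\oplus \dim\sigma}$. Since $Z(G)$ also acts trivially on $A$ and $N$ factors, the representation $\sigma \otimes \chi \otimes \mathrm{triv}$ of $MAN$ restricts to $Z(G)$ as a multiple of $\chi_\sigma$. By the same routine computation used in the proof of \Cref{le:surj}, the induced representation $\pi_{\sigma,\chi} = \mathrm{Ind}_{MAN}^G(\sigma \otimes \chi \otimes \mathrm{triv})$ restricts to $Z(G)$ as a multiple of $\chi_\sigma$, and therefore every irreducible summand of $\pi_{\sigma,\chi}$ has central character exactly $\chi_\sigma$.

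Finally, surjectivity reduces to showing that the map $\widehat{M} \to \widehat{Z(G)}$, $\sigma \mapsto \chi_\sigma$, hits every character of $Z(G)$. Given $\chi_0 \in \widehat{Z(G)}$, the compact-group analog of the argument in \Cref{le:surj} (or, even more concretely, Frobenius reciprocity applied to the finite-dimensional induction $\mathrm{Ind}_{Z(G)}^M \chi_0$) produces an irreducible $\sigma \in \widehat{M}$ with $\sigma|_{Z(G)} \cong \chi_0^{\oplus \dim \sigma}$. Choosing any $\chi \in \widehat{A}$ and any irreducible summand $\rho \leq \pi_{\sigma,\chi}$ (always available since $\pi_{\sigma,\chi}$ decomposes as a finite sum of irreducibles), the class $g_\rho \in C(G)$ maps under $\cat{can}$ to $\chi_0$. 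No step poses a serious obstacle; the only subtlety is making sure to invoke that $Z(G) \subseteq M$ rather than merely $Z(G) \subseteq K$, so that the central character of $\rho$ is genuinely determined by $\sigma$ alone and not by $\chi$.
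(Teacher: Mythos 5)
Your argument is internally correct and does establish the literal statement: every character of $Z(G)$ is the image under $\cat{can}$ of the class of some irreducible summand of a principal-series representation. The computation is sound --- $Z(G)\subseteq Z(M)$, the induced representation $\pi_{\sigma,\chi}$ restricts to $Z(G)$ as a multiple of the central character $\chi_\sigma$ of $\sigma$, and Frobenius reciprocity for the compact group $M$ shows that $\sigma\mapsto\chi_\sigma$ hits all of $\widehat{Z(G)}$. This is a more elementary route than the paper's, which instead deduces the corollary from \Cref{th:dpp} and \Cref{pr:nochi}: an arbitrary $\rho\in\widehat{G}$ satisfies $g_\rho=g_\tau g_\pi^{-1}$ in $C(G)$ for irreducible summands $\tau,\pi$ of principal-series representations, these can be traded for irreducible $\pi_{\sigma,\chi}$'s, and \Cref{eq:pipi} collapses such products back into principal-series classes; surjectivity onto $\widehat{Z(G)}$ then comes for free from \Cref{le:surj}.

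The trouble is that the paper's proof establishes, and the rest of the paper actually uses, the strictly stronger fact that the principal-series classes exhaust $C(G)$ --- see the proof of \Cref{pr:mtog}, where this corollary is invoked as ensuring that ``$C(G)$ consists of the (classes of) $\pi_{\sigma,\chi}$''. Your argument cannot deliver that: since injectivity of $\cat{can}$ is precisely what is still unknown at this stage, knowing that the $\cat{can}$-images of the principal-series classes cover $\widehat{Z(G)}$ says nothing about whether those classes cover $C(G)$. The missing ingredient is exactly \Cref{th:dpp} (tensoring any irreducible with a principal series weakly yields a summand of a principal series), which is the mechanism by which arbitrary elements of $\widehat{G}$ get absorbed into the principal-series part of the chain group. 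Without it, the surjectivity claims downstream in \Cref{pr:mtog} and \Cref{pr:psik}, and hence the proof of \Cref{th:rr1}, do not go through. So treat your computation as a correct proof of a weaker reading of the statement, and supply the tensor-product absorption argument to get the version the paper needs.
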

\begin{proof}
  \Cref{th:dpp} shows that an arbitrary $\rho\in\widehat{G}$ is, in the chain group, the product of two irreducible summands of principal series representations. The latter can then be replaced by irreducible principal-series $\pi_{\sigma,\chi}$ by \Cref{pr:nochi}.
\end{proof}

\Cref{pr:nochi} (together with the irreducibility of $\pi_{\sigma,\chi}$ for ``most'' $\chi$) will now allow us to define a map

\begin{equation}\label{eq:mtog}
  \widehat{M}\ni \sigma\mapsto \text{ class of }\pi_{\sigma,\chi}\in C(G)_{princ}. 
\end{equation}

\begin{proposition}\label{pr:mtog}
  For a Lie group $G$ as in \Cref{pr:nochi} the map \Cref{eq:mtog} descends to a surjective group morphism
  \begin{equation}\label{eq:mtogg}
    C(M)\to C(G). 
  \end{equation}
\end{proposition}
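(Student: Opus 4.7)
My plan is to check that the three defining conditions of $C(M)$ translate into identities in $C(G)$ under the assignment $\sigma \mapsto [\pi_{\sigma,\chi}]$, and then obtain surjectivity from \Cref{cor:pionly}. Well-definedness on $\widehat M$ is already handled by \Cref{pr:nochi}.

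For the tensor-product relation \Cref{item:2} of \Cref{def:chain}, suppose $\sigma \preceq \sigma_1 \otimes \sigma_2$ in $\widehat M$. I would pick $\chi_1, \chi_2 \in \widehat A$ simultaneously so that $\pi_{\sigma_1,\chi_1}$, $\pi_{\sigma_2,\chi_2}$ and $\pi_{\sigma, \chi_1 \chi_2}$ are all irreducible. This is possible because the multiplication map $\widehat A \times \widehat A \to \widehat A$ is open (being the group law on an abelian topological group), so each of the three irreducibility conditions (\cite[Corollary 12.5.4]{wal2}) cuts out a dense open subset of $\widehat A \times \widehat A$, forcing their intersection to be nonempty. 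Tensoring the assumed weak containment with $\chi_1 \chi_2$ as a representation of $MA$ and inducing to $G$, the Fell-continuity of induction \cite[Theorem F.3.5]{bdv} combined with \eqref{eq:pipi} yields
\[
\pi_{\sigma, \chi_1 \chi_2} \preceq \mathrm{Ind}_{MA}^G\bigl((\sigma_1\otimes\sigma_2)\otimes \chi_1\chi_2\bigr) \cong \pi_{\sigma_1, \chi_1} \otimes \pi_{\sigma_2, \chi_2},
\]
which, since all three principal series involved are irreducible, is precisely the identity $[\pi_{\sigma, \chi_1 \chi_2}] = [\pi_{\sigma_1, \chi_1}] \cdot [\pi_{\sigma_2, \chi_2}]$ needed in $C(G)$ (by \Cref{item:2} of \Cref{def:chain}).

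The contragredient compatibility (\Cref{item:3} of \Cref{def:chain}) reduces to the familiar identity $\pi_{\sigma,\chi}^* \cong \pi_{\sigma^*, \chi^{-1}}$, which expresses unitary induction commuting with dualization and matches the relation $g_{\pi^*} = g_{\pi}^{-1}$ once \Cref{pr:nochi} is invoked to kill the $\chi$-dependence. For surjectivity, I would simply extract from the proof of \Cref{cor:pionly} the observation that classes of irreducible principal-series representations generate $C(G)$; each such class by construction lies in the image of $C(M)$.

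The only genuinely non-trivial step is the simultaneous parameter choice for $(\chi_1, \chi_2)$ in the tensor-compatibility verification, but this is handled routinely given the openness of group multiplication in the abelian topological group $\widehat A$.
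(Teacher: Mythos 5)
There is a genuine gap at the central step of your tensor-compatibility verification. From $\sigma\preceq\sigma_1\otimes\sigma_2$ in $\widehat M$, tensoring with $\chi_1\chi_2$ and applying Fell-continuity of $\mathrm{Ind}_{MA}^G$ gives
\begin{equation*}
  \mathrm{Ind}_{MA}^G(\sigma\otimes\chi_1\chi_2)\ \preceq\ \mathrm{Ind}_{MA}^G\bigl((\sigma_1\otimes\sigma_2)\otimes\chi_1\chi_2\bigr)\cong\pi_{\sigma_1,\chi_1}\otimes\pi_{\sigma_2,\chi_2},
\end{equation*}
but the left-hand side is \emph{not} $\pi_{\sigma,\chi_1\chi_2}$: the principal series is induced from the parabolic $MAN$ with $N$ acting trivially, whereas \Cref{eq:pipi} produces an induction from $MA$. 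By induction in stages the left-hand side is $\mathrm{Ind}_{MAN}^G\circ\mathrm{Ind}_{MA}^{MAN}(\sigma\otimes\chi_1\chi_2)$, so what you still need is
\begin{equation*}
  \sigma\otimes\chi_1\chi_2\otimes\mathrm{triv}\ \preceq\ \mathrm{Ind}_{MA}^{MAN}(\sigma\otimes\chi_1\chi_2),
\end{equation*}
and this is exactly where the real content lies: the paper derives it from the amenability of $MAN$ (compact-by-solvable) via Greenleaf's theorem \cite[p.296, Theorem]{grnlf}, then induces up to $G$. Your proposal never invokes amenability, and ``Fell-continuity of induction combined with \Cref{eq:pipi}'' does not by itself bridge the gap between inducing from $MA$ and inducing from $MAN$. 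The containment you display is true, but the one-line justification you give for it is not a proof.

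Two smaller remarks. The simultaneous choice of $(\chi_1,\chi_2)$ making all three principal series irreducible --- which you flag as the only non-trivial point --- is actually dispensable: \Cref{pr:nochi} already makes the class in $C(G)$ independent of the continuous parameter, so the paper works with a direct-summand inclusion $\sigma''\le\sigma\otimes\sigma'$ and arbitrary generic characters. Likewise, since $M$ is compact, $C(M)$ is presented by the tensor relations alone (M\"uger's \cite[Proposition 2.3]{mug}), so your separate verification of the contragredient relation, while harmless, is not needed.
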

\begin{proof}
  Because of the independence on $\chi$ implicit in \Cref{eq:mtog}, we will typically mention generic characters $\chi$ that will not otherwise play much of a role.

  The surjectivity claim is clear from \Cref{cor:pionly}, which ensures that $C(G)$ consists of the (classes of) $\pi_{\sigma,\chi}$. What is at issue here is showing that \Cref{eq:mtog} does indeed factor through $C(M)$ and that that factorization is a group morphism. Since $M$ is compact the simpler definition of the chain group introduced in \cite[Proposition 2.3]{mug} applies and we are left having to prove:
  \begin{equation}\label{eq:iffi}
    \sigma''\le \sigma\otimes\sigma'\Rightarrow \pi_{\sigma'',\chi''} = \pi_{\sigma,\chi}\pi_{\sigma',\chi'}\text{ in }C(G),
  \end{equation}
  where the left-hand side of the implication means that $\sigma''\in \widehat{M}$ is a summand of the tensor product.

  Now, $\sigma''\le \sigma\otimes\sigma'$ together with \Cref{eq:pipi} imply that
  \begin{equation}\label{eq:indma}
    \pi_{\sigma,\chi}\otimes\pi_{\sigma',\chi'} \ge \mathrm{Ind}_{MA}^G(\sigma''\otimes \chi'')
  \end{equation}
  (containment as a direct summand) for some character $\sigma''$. Since
  \begin{equation}\label{eq:pisc}
    \pi_{\sigma'',\chi''} = \mathrm{Ind}_{MAN}^G(\sigma''\otimes\chi''\otimes\mathrm{triv})
  \end{equation}
  by definition and the right-hand side of \Cref{eq:indma} breaks up as
  \begin{equation}\label{eq:indind}
    \mathrm{Ind}_{MAN}^G\circ\mathrm{Ind}_{MA}^{MAN}(\sigma''\otimes\chi'')
  \end{equation}
  by induction in stages \cite[Theorem 2.47]{kt}, it will be enough to observe the following.
  \begin{itemize}
  \item $MAN$ being amenable (as the semidirect product of a compact group $M$ and a solvable group $AN$ \cite[Theorem 1.2.11]{rnd}), we have the weak containment
    \begin{equation*}
      \sigma''\otimes\chi''\otimes\mathrm{triv}
      \preceq
      \mathrm{Ind}_{MA}^{MAN}(\sigma''\otimes\chi'')
      \cong
      \mathrm{Ind}_{MA}^{MAN}(\sigma''\otimes\chi''\otimes\mathrm{triv})|_{MA}
    \end{equation*}
    by \cite[p.296, Theorem]{grnlf};
  \item whence \Cref{eq:pisc} is weakly contained in \Cref{eq:indind} by further inducing up to $G$, because induction is Fell-continuous \cite[Theorem F.3.5]{bdv}. 
  \end{itemize}
  The conclusion is that we have the weak containment relation
  \begin{equation*}
    \pi_{\sigma'',\chi''}\preceq \pi_{\sigma,\chi}\otimes\pi_{\sigma',\chi'},
  \end{equation*}
  hence the right-hand side of the desired implication \Cref{eq:iffi}.
\end{proof}

\Cref{pr:mtog} can be recast in somewhat different terms.

\begin{proposition}\label{pr:psim}
  Let $G=KAN$ be the Iwasawa decomposition of a connected semisimple Lie group with finite center and $B=MAN$ the corresponding minimal parabolic.
  \begin{enumerate}[(a)]
  \item\label{item:9} The map
    \begin{equation*}
      \Psi_M:\widehat{M}\ni\sigma\mapsto\left(\text{class of any irreducible }\pi\preceq \mathrm{Ind}_M^G \sigma\right)\in C(G)
    \end{equation*}
    is well defined and induces the epimorphism $C(M)\to C(G)$ of \Cref{pr:mtog}, denoted abusively by the same symbol $\Psi_M$.
  \item\label{item:10} Furthermore, $\Psi(\sigma_1)=\Psi(\sigma_2)$ provided $\mathrm{Ind}_M^K\sigma_i\in\widehat{K}$, $i=1,2$ are not disjoint.
  \end{enumerate}
\end{proposition}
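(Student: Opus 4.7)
I would tackle the two items separately.

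For (a), begin by showing $\pi_{\sigma,\chi}\preceq\mathrm{Ind}_M^G\sigma$ for every $\sigma\in\widehat{M}$ and $\chi\in\widehat{A}$. This is essentially a reprise of the amenability argument inside \Cref{pr:mtog}: $B=MAN$ is amenable, the $B$-representation $\sigma\otimes\chi\otimes\mathrm{triv}$ restricts to $\sigma$ on $M$, so \cite[p.\,296, Theorem]{grnlf} gives $\sigma\otimes\chi\otimes\mathrm{triv}\preceq\mathrm{Ind}_M^B\sigma$, and induction in stages \cite[Theorem 2.47]{kt} together with Fell-continuity of induction \cite[Theorem F.3.5]{bdv} produces $\pi_{\sigma,\chi}=\mathrm{Ind}_B^G(\sigma\otimes\chi\otimes\mathrm{triv})\preceq\mathrm{Ind}_M^G\sigma$. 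Combined with the finite decomposition of $\pi_{\sigma,\chi}$ into irreducibles \cite[Corollary 5.5.2.2]{war1} and \Cref{pr:nochi}, this already shows the candidate class in the new definition (if well defined) must coincide with the $\Psi_M(\sigma)=[\pi_{\sigma,\chi}]$ of \Cref{pr:mtog}.

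The genuine content of (a) is thus that \emph{every} irreducible $\pi\preceq\mathrm{Ind}_M^G\sigma$ has class $[\pi_{\sigma,\chi}]$ in $C(G)$. I plan to derive this from the intermediate claim that $\pi\preceq\pi_{\sigma,\chi}$ for some $\chi$: since $\pi_{\sigma,\chi}$ decomposes as a finite direct sum of irreducibles and $\ker\pi\subset C^*(G)$ is prime (primitive ideals in $C^*$-algebras are prime), $\pi$ must be unitarily equivalent to one of those summands, and \Cref{pr:nochi} then delivers $[\pi]=[\pi_{\sigma,\chi}]$. The intermediate claim itself I would justify via the orbit-method Fell-limit strategy of \Cref{th:dpp} applied to the $B$-representation $\mathrm{Ind}_M^B\sigma$: its restriction to $N$ is $MA$-invariant, the $A$-dilation on $\fn^*$ places $\mathrm{triv}_N$ in the Fell closure, and tracking $M$-types constrains that closure to parameters of the form $\sigma\otimes\chi\otimes\mathrm{triv}$; inducing up to $G$ then transfers the weak containment to $\pi_{\sigma,\chi}\subset\widehat{G}$. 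I expect making this Fell-limit step rigorous to be the principal obstacle.

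For (b), the irreducibility of $\mathrm{Ind}_M^K\sigma_i$ makes ``not disjoint'' equivalent to $\mathrm{Ind}_M^K\sigma_1\cong\mathrm{Ind}_M^K\sigma_2$. Pick $\chi\in\widehat{A}$ with $\pi_{\sigma_1,\chi}$ irreducible, available on a dense open subset of $\widehat{A}$ by \cite[Corollary 12.5.4]{wal2}. The Iwasawa decomposition $G=KB$ with $K\cap B=M$ yields the standard identification $\pi_{\sigma_1,\chi}|_K\cong\mathrm{Ind}_M^K\sigma_1\cong\mathrm{Ind}_M^K\sigma_2$, so $\sigma_2$ appears as an $M$-summand of $\pi_{\sigma_1,\chi}|_M$ (Frobenius for the compact pair $M\le K$). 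Classical Frobenius reciprocity then supplies a nonzero intertwiner $\mathrm{Ind}_M^G\sigma_2\to\pi_{\sigma_1,\chi}$, which splits off (by unitarity of $\pi_{\sigma_1,\chi}$) and exhibits $\pi_{\sigma_1,\chi}$ as a direct summand, hence weakly contained in, $\mathrm{Ind}_M^G\sigma_2$. Part (a) then gives $\Psi_M(\sigma_2)=[\pi_{\sigma_1,\chi}]=\Psi_M(\sigma_1)$.
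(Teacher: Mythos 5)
Your part (a) hinges on the intermediate claim that every irreducible $\pi\preceq\mathrm{Ind}_M^G\sigma$ satisfies $\pi\preceq\pi_{\sigma,\chi}$ for some $\chi$, and that claim is false. Take $G=SL(2,\bR)$ and $\sigma=\mathrm{triv}$ on $M=\{\pm I\}=Z(G)$: then $\mathrm{Ind}_M^G\sigma\cong L^2(G/Z(G))$ weakly contains all of $\widehat{PSL(2,\bR)}_{red}$, in particular the even discrete series, whereas each $\pi_{\mathrm{triv},\chi}$ is a finite direct sum of non-discrete-series irreducibles (and $G$ is CCR, so an irreducible weakly contained in such a finite sum must be equivalent to one of the summands). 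The Fell-limit device of \Cref{th:dpp} cannot repair this: it produces \emph{elements} of the support of a representation, i.e.\ lower bounds, whereas your claim needs an upper bound on the support of $\mathrm{Ind}_M^G\sigma$ --- a statement about \emph{all} of its weak containees. The paper never asserts $\pi\preceq\pi_{\sigma,\chi}$; instead it writes $\mathrm{Ind}_M^G\sigma\cong\mathrm{Ind}_{MA}^G(\sigma\otimes\int_{\widehat{A}}\chi\ \mathrm{d}\mu_A(\chi))$ and identifies $\mathrm{Ind}_{MA}^G(\sigma\otimes\chi)$ with a tensor product $\pi_{\sigma,\chi_1}\otimes\pi_{\mathrm{triv},\chi_2}$ via \Cref{eq:pipi}; since \Cref{pr:mtog} already makes $\Psi_M$ a group morphism, the class $[\pi_{\mathrm{triv},\chi_2}]$ is trivial and the defining relations of $C(G)$ force every irreducible weakly contained in that tensor product to have class $[\pi_{\sigma,\chi_1}]=\Psi_M(\sigma)$ --- no containment in a single principal series is required. (Your opening step, $\pi_{\sigma,\chi}\preceq\mathrm{Ind}_M^G\sigma$ via amenability of $B$, is correct but only shows the new recipe is \emph{consistent} with \Cref{pr:mtog}, not that it is well defined.)

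In part (b) the appeal to ``classical Frobenius reciprocity'' to produce a nonzero intertwiner $\mathrm{Ind}_M^G\sigma_2\to\pi_{\sigma_1,\chi}$, let alone a direct-summand inclusion, is not valid for unitary induction from a compact subgroup of a noncompact group: already $\mathrm{Ind}_{\{1\}}^{\bR}\mathrm{triv}=L^2(\bR)$ has no irreducible summands at all, even though every character of $\bR$ restricts to the trivial representation of $\{1\}$. The fix is immediate and is what the paper does: irreducibility turns ``not disjoint'' into $\mathrm{Ind}_M^K\sigma_1\cong\mathrm{Ind}_M^K\sigma_2$, induction in stages gives $\mathrm{Ind}_M^G\sigma_1\cong\mathrm{Ind}_M^G\sigma_2$, and part (a) --- once correctly established --- yields $\Psi_M(\sigma_1)=\Psi_M(\sigma_2)$. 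The detour through the compact picture $\pi_{\sigma_1,\chi}|_K$ is unnecessary.
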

\begin{proof}
  It follows from \Cref{pr:mtog} that $\pi_{\mathrm{triv},\chi}$ are all in the trivial class of $C(G)$, whereupon \Cref{eq:pipi} identifies the image of $\sigma\in\widehat{M}$ through \Cref{eq:mtogg} with
  \begin{equation*}
    \left(\text{class of any irreducible }\pi\preceq \mathrm{Ind}_{MA}^G \sigma\otimes\chi\right)\in C(G)
  \end{equation*}
  (for arbitrary $\chi\in\widehat{A}$). This coincides with 
  \begin{equation*}
    \left(\text{class of any irreducible }\pi\preceq \mathrm{Ind}_{M}^G \sigma\right)\in C(G)
  \end{equation*}
  because
  \begin{equation*}
    \mathrm{Ind}_M^{MA}\sigma \cong \sigma\otimes\left(\text{regular representation }L^2(A)\right)\cong \sigma\otimes \int_{\widehat{A}}\chi\ \mathrm{d}\mu_A(\chi),
  \end{equation*}
  proving part \Cref{item:9}.

  As for \Cref{item:10}, it is a consequence of \Cref{item:9}: if $\mathrm{Ind}_M^K\sigma_i$ have summands in common then so do
  \begin{equation*}
    \mathrm{Ind}_M^G\sigma_i\cong \left(\mathrm{Ind}_K^G\circ\mathrm{Ind}_M^K\right)\sigma_i,
  \end{equation*}
  (induction in stages \cite[Theorem 2.47]{kt}). 
\end{proof}

The advantage of \Cref{pr:psim} is that it has a counterpart for $K$ rather than $M$.

\begin{proposition}\label{pr:psik}
  Let $G=KAN$ be the Iwasawa decomposition of a connected semisimple Lie group with finite center and $B=MAN$ the corresponding minimal parabolic.
  \begin{enumerate}[(a)]
  \item\label{item:11} The map
    \begin{equation*}
      \Psi_K:\widehat{K}\ni\sigma\mapsto\left(\text{class of any irreducible }\pi\preceq \mathrm{Ind}_K^G \sigma\right)\in C(G)
    \end{equation*}
    is well defined and induces an epimorphism $C(K)\to C(G)$ denoted by the same symbol $\Psi_K$.
  \item\label{item:12} Furthermore, $\Psi(\sigma_1)=\Psi(\sigma_2)$ provided $\sigma_i|_{M}\in\widehat{M}$, $i=1,2$ are not disjoint.
  \end{enumerate}
\end{proposition}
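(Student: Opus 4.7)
The plan is to deduce both parts from the corresponding statements for $M$ in \Cref{pr:psim} by passing through the compact pair $M\le K$. The key observation is the following: for $\sigma\in\widehat{K}$ and any irreducible $\tau\in\widehat{M}$ with $\tau\le\sigma|_M$, classical Frobenius reciprocity for compact groups gives $\sigma\le\mathrm{Ind}_M^K\tau$ as a direct summand, and induction in stages \cite[Theorem 2.47]{kt} then produces a subrepresentation inclusion
\begin{equation*}
\mathrm{Ind}_K^G\sigma \;\le\; \mathrm{Ind}_K^G\mathrm{Ind}_M^K\tau \;\cong\; \mathrm{Ind}_M^G\tau.
\end{equation*}
In particular every irreducible $\pi\preceq\mathrm{Ind}_K^G\sigma$ is weakly contained in $\mathrm{Ind}_M^G\tau$, so by \Cref{pr:psim}\Cref{item:9} its class in $C(G)$ equals $\Psi_M(\tau)$.

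This single observation simultaneously establishes that $\Psi_K(\sigma)$ is well defined and that $\Psi_K(\sigma)=\Psi_M(\tau)$ for every irreducible $\tau\le\sigma|_M$; it also settles part \Cref{item:12} immediately, since two $\sigma_i$ sharing a common $M$-summand $\tau$ are both sent to $\Psi_M(\tau)$. To see that $\Psi_K(\sigma)$ does not depend on the choice of $\tau$, note that any two summands $\tau_1,\tau_2\le\sigma|_M$ have the property that $\sigma$ sits inside both $\mathrm{Ind}_M^K\tau_i$ by Frobenius reciprocity, so by \Cref{pr:psim}\Cref{item:10} we have $\Psi_M(\tau_1)=\Psi_M(\tau_2)$.

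To upgrade $\Psi_K$ to a group morphism $C(K)\to C(G)$, one checks multiplicativity: if $\sigma''\le\sigma\otimes\sigma'$ in $\widehat{K}$, pick any irreducible $\tau''\le\sigma''|_M\le(\sigma\otimes\sigma')|_M=\sigma|_M\otimes\sigma'|_M$, so that $\tau''\le\tau\otimes\tau'$ for some irreducible summands $\tau\le\sigma|_M$ and $\tau'\le\sigma'|_M$. Combining the identity $\Psi_K(\sigma)=\Psi_M(\tau)$ (and similarly for $\sigma'$ and $\sigma''$) with the multiplicativity of $\Psi_M$ provided by \Cref{pr:psim}\Cref{item:9} yields
\begin{equation*}
\Psi_K(\sigma'') \;=\; \Psi_M(\tau'') \;=\; \Psi_M(\tau)\Psi_M(\tau') \;=\; \Psi_K(\sigma)\Psi_K(\sigma');
\end{equation*}
the contragredient relation $\Psi_K(\sigma^*)=\Psi_K(\sigma)^{-1}$ follows by the same recipe applied to $\sigma^*|_M=(\sigma|_M)^*$. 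Finally, surjectivity of $\Psi_K$ is inherited from that of $\Psi_M$: every $\tau\in\widehat{M}$ appears as an $M$-summand of any irreducible $K$-summand of the non-zero finite-dimensional induced representation $\mathrm{Ind}_M^K\tau$, so each generator $\Psi_M(\tau)$ of the image of $\Psi_M$ lies in the image of $\Psi_K$.

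The argument is essentially formal once the reduction $\Psi_K(\sigma)=\Psi_M(\tau)$ is in hand; the only mildly delicate point is the consistent accounting of the choice of $M$-summand in the definition of $\Psi_K(\sigma)$, and this is resolved via \Cref{pr:psim}\Cref{item:10} as indicated above.
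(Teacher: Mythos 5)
Your proposal is correct and follows essentially the same route as the paper: Frobenius reciprocity plus induction in stages to get $\mathrm{Ind}_K^G\sigma\le\mathrm{Ind}_M^G\tau$ for any $\tau\le\sigma|_M$, and then everything (well-definedness, part (b), multiplicativity, surjectivity) is pulled back from \Cref{pr:psim}. The only cosmetic difference is in the multiplicativity step, where you restrict $\sigma\otimes\sigma'$ to $M$ and invoke the morphism property of $\Psi_M$ directly rather than routing through the induction-restriction formula as the paper does; the two manipulations are equivalent.
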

\begin{proof}
  That $\psi_K:\widehat{K}\to C(G)$ is well defined follows from \Cref{pr:psim}: an arbitrary $\sigma\in \widehat{K}$ is a summand of some
  \begin{equation*}
    \mathrm{Ind}_M^K \sigma',\ \sigma'\in\widehat{M},
  \end{equation*}
  so that in turn
  \begin{equation*}
    \mathrm{Ind}_K^G\sigma\le \left(\mathrm{Ind}_K^G\circ\mathrm{Ind}_M^K\right) \sigma'\cong \mathrm{Ind}_M^G\sigma'.
  \end{equation*}
  By part \Cref{pr:psim} \Cref{item:9} all summands of the right-hand side are identified in $C(G)$, hence the conclusion (that $\Psi_K$ is well defined).

  If now both $\sigma_i\in\widehat{K}$, $i=1,2$ contain $\sigma'\in\widehat{M}$ when restricted to $M$, then by Frobenius reciprocity for compact groups \cite[(8.9)]{rob} we have
  \begin{equation*}
    \sigma_i\le \mathrm{Ind}_M^K\sigma',\ i=1,2. 
  \end{equation*}
  We can thus choose the same $\sigma'$ for both $\sigma_i$ in the argument above, proving part \Cref{item:12} of the statement.

  It thus remains to prove the portion of part \Cref{item:11} claiming that $\Psi_K$ induces a {\it group morphism} $C(K)\to C(G)$ (rather than just a map):
  \begin{equation*}
    \sigma\le \sigma'\otimes \sigma''\Rightarrow \Psi_K(\sigma)=\Psi(\sigma')\Psi(\sigma'')\in C(G). 
  \end{equation*}
  Once more using Frobenius reciprocity and the induction-restriction formula (\cite[Theorem 2.58]{kt}) we have
  \begin{equation}\label{eq:inindss}
    \sigma\le \sigma'\otimes\mathrm{Ind}_M^K(\sigma''|_M)\cong \mathrm{Ind}_M^K(\sigma'|_M\otimes \sigma''|_M)
  \end{equation}
  We know from \Cref{pr:psim} that further inducing the right-hand side of \Cref{eq:inindss} to $G$ produces a direct integral of irreducible $\rho\in\widehat{G}$, all identified with
  \begin{equation*}
    \psi_M(\text{irreducible}\le \sigma'|_M)\cdot \psi_M(\text{irreducible}\le \sigma''|_M)\in C(G).
  \end{equation*}
  As seen in the preceding discussion (in the present proof) this is precisely
  \begin{equation*}
    \Psi_K(\sigma')\Psi_K(\sigma'')\in C(G). 
  \end{equation*}
  Since at the representation level this object was obtained via $\mathrm{Ind}_K^G$ from a $K$-representation containing $\sigma\in\widehat{K}$ as a summand, it must also coincide with $\Psi_K(\sigma)$. 
\end{proof}

\pf{th:rr1}
\begin{th:rr1}
  It is enough to consider (connected, semisimple, Lie) groups $G$ with finite center, since their cc-duality entails their covers' cc-duality by \Cref{cor:quotenough}. 

  Consider the epimorphism
  \begin{equation*}
    \Psi_K:\widehat{K}\cong C(K)\to C(G)
  \end{equation*}
  of \Cref{pr:psik} (where the first isomorphism uses the cc-duality of the compact group $K$ \cite[Theorem 3.1]{mug}). According to \Cref{cor:injenough}, we have to argue that $\Psi_K$ annihilates (classes of) irreducible representations $\sigma\in\widehat{K}$ trivial on $Z(G)\subset K$.

  Consider such a $\sigma\in\widehat{K}$, and an irreducible summand
  \begin{equation*}
    \widehat{M}\ni\sigma'\le \sigma|_M
  \end{equation*}
  which must also be trivial on $Z(G)\le M$. Because $Z(K)\cap M=Z(G)$ (\Cref{le:zkm}), $\sigma'$ extends to an irreducible representation (also denoted by $\sigma'$) of $MZ(K)$, trivial on $Z(K)$. But then $\sigma'$ further extends to an irreducible $K$-representation $\overline{\sigma}$: 
  \begin{equation*}
    \overline{\sigma}\ =\text{ any irreducible summand of }\ \mathrm{Ind}_{MZ(K)}^K\sigma'
  \end{equation*}
  will do, by Frobenius reciprocity. Now, on the one hand $\overline{\sigma}$ is trivial on $Z(K)$  and hence trivial in $C(K)$ (so annihilated by $\Psi_K:C(K)\to C(G)$) by the cc-duality of $K$. On the other hand though,
  \begin{equation*}
    \Psi_K(\overline{\sigma}) = \Psi_K(\sigma)
  \end{equation*}
  by \Cref{pr:psik} \Cref{item:12}. This finishes the proof. 
\end{th:rr1}

\begin{lemma}\label{le:zkm}
  Let $G=KAN$ be the Iwasawa decomposition of a connected semisimple Lie group with finite center and $B=MAN$ the corresponding minimal parabolic. We then have
  \begin{equation*}
    Z(K)\cap M = Z(G). 
  \end{equation*}
\end{lemma}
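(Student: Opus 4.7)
The containment $Z(G) \subseteq Z(K) \cap M$ is immediate from the setup: $Z(G) \subseteq K$ by the Iwasawa decomposition as noted earlier in the paper, $Z(G) \subseteq Z(K)$ because central elements commute in particular with $K$, and $Z(G) \subseteq M = Z_K(\mathfrak a)$ because central elements commute with $A$ (hence act trivially on $\mathfrak a$ through $\mathrm{Ad}$). The substantive direction is the reverse inclusion $Z(K) \cap M \subseteq Z(G)$.

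The plan is to take $g \in Z(K) \cap M$ and show that $\mathrm{Ad}(g)$ acts as the identity on all of $\mathfrak g := Lie(G)$; because $G$ is connected with finite center, the kernel of $\mathrm{Ad}: G \to \mathrm{GL}(\mathfrak g)$ is exactly $Z(G)$, so this will conclude the proof. For the analysis of $\mathrm{Ad}(g)$ I would use the restricted root space decomposition
\[
  \mathfrak g = \mathfrak m \oplus \mathfrak a \oplus \bigoplus_{\alpha\in\Sigma}\mathfrak g_\alpha,
\]
where $\Sigma$ is the set of restricted roots associated to the pair $(\mathfrak g,\mathfrak a)$, and choose the positive system so that $\mathfrak n = \bigoplus_{\alpha>0}\mathfrak g_\alpha$. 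Triviality on $\mathfrak m$ and $\mathfrak a$ is immediate: $g \in Z(K)$ combined with the connectedness of $K$ (guaranteed by the Iwasawa diffeomorphism $K \times A \times N \cong G$ together with the simple connectedness of $A$ and $N$) yields $\mathrm{Ad}(g)|_{\mathfrak k} = \mathrm{id}$, so in particular $\mathrm{Ad}(g)$ fixes $\mathfrak m \subseteq \mathfrak k$ pointwise, and $g \in M$ gives $\mathrm{Ad}(g)|_{\mathfrak a} = \mathrm{id}$ by definition.

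The remaining piece — triviality on each root space $\mathfrak g_\alpha$ — is the only step with any content and would be handled using the Cartan involution $\theta$ associated to $K$. Because $g \in M$, the map $\mathrm{Ad}(g)$ preserves the $\mathfrak a$-weight decomposition and hence preserves $\mathfrak g_\alpha$ for every $\alpha \in \Sigma$. For $X \in \mathfrak g_\alpha$ we have $\theta X \in \mathfrak g_{-\alpha}$ and $X + \theta X \in \mathfrak k$; since $\mathrm{Ad}(g)$ fixes $\mathfrak k$ pointwise,
\[
  \mathrm{Ad}(g)X + \mathrm{Ad}(g)(\theta X) = X + \theta X,
\]
with the two summands on the left lying in the distinct weight spaces $\mathfrak g_\alpha$ and $\mathfrak g_{-\alpha}$ respectively. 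Matching weights forces $\mathrm{Ad}(g)X = X$, as desired.

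The main (mild) obstacle is just confirming the two structural facts used above — that $K$ is connected in our setting, and that $Z(K)$ coincides with the kernel of $\mathrm{Ad}: K \to \mathrm{GL}(\mathfrak k)$ for compact connected $K$ — both of which are standard but worth spelling out to justify $\mathrm{Ad}(g)|_\mathfrak{k} = \mathrm{id}$. Once those are in hand, the assembly of the three triviality statements across the summands $\mathfrak m$, $\mathfrak a$, $\mathfrak g_\alpha$ completes the argument.
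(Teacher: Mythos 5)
Your proof is correct, and both you and the paper share the same overall strategy: reduce to showing that $\mathrm{Ad}(g)$ is trivial on all of $\fg$ for $g\in Z(K)\cap M$, then conclude from connectedness of $G$ that $g\in Z(G)$. The mechanism for the substantive step differs, however. The paper observes that an element of $Z(K)\cap M$ lies in every conjugate $kMk^{-1}$ (being fixed by $K$-conjugation) and therefore centralizes $\bigcup_{k\in K}\mathrm{Ad}_k(\fa)=\fp$, invoking the $K$-conjugacy of maximal abelian subspaces of $\fp$ (\cite[Theorem 6.51]{knp-bi}); together with centralizing $\fk$, this covers $\fg=\fk\oplus\fp$. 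You instead work with the restricted root space decomposition $\fg=\fm\oplus\fa\oplus\bigoplus_\alpha\fg_\alpha$ and use the Cartan involution to propagate the identity $\mathrm{Ad}(g)|_{\fk}=\mathrm{id}$ into each root space via the weight-matching argument on $X+\theta X$. Both routes are sound; the paper's is a one-line appeal to a conjugacy theorem, while yours trades that for the (equally standard) facts that $\mathrm{Ad}(g)$ preserves each $\fg_\alpha$ and that $\theta\fg_\alpha=\fg_{-\alpha}$. One small remark: the connectedness of $K$ is not actually needed for the implication $g\in Z(K)\Rightarrow \mathrm{Ad}(g)|_{\fk}=\mathrm{id}$ (conjugation by $g$ is already the identity map on $K$, so its differential is the identity on $\fk$); connectedness of $G$ is what you genuinely need, for $\ker(\mathrm{Ad}:G\to \mathrm{GL}(\fg))=Z(G)$, and that is part of the hypothesis.
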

\begin{proof}
  The global center $Z(G)$ is contained in $K$ \cite[Theorem 6.31]{knp-bi} and hence also in $M$ (given the latter's definition as the centralizer of $\fa=Lie(A)$ in $K$: \Cref{eq:defm}), so the inclusion
  \begin{equation*}
    Z(K)\cap M\supseteq Z(G)
  \end{equation*}
  is clear. To prove the opposite inclusion, consider first a {\it Cartan decomposition}
  \begin{equation*}
    Lie(G)=:\fg = \fk\oplus \fp,\ \fk:=Lie(K)
  \end{equation*}
  as in \cite[(6.23)]{knp-bi}, compatible with the Iwasawa decomposition $G=KAN$ as in \cite[Chapter VI, \S 4]{knp-bi}; in particular, the Lie algebra $\fa=Lie(A)$ is a maximal abelian subspace of $\fp$.

  The intersection of all conjugates $kMk^{-1}$ will centralize
  \begin{equation*}
    \bigcup_k Ad_k(\fa) = \fp,
  \end{equation*}
  where the last equality is \cite[Theorem 6.51]{knp-bi}. Since $Z(K)$ centralizes $\fk$,
  \begin{equation*}
    Z(K)\cap M\subseteq Z(K)\cap \bigcap_{k\in K}kMk^{-1}
  \end{equation*}
  centralizes both $\fk$ and $\fp$ and hence $\fg=\fk\oplus \fp$. In short, that intersection must be central in $G$.
\end{proof}



\addcontentsline{toc}{section}{References}

\Addresses

\end{document}